\documentclass[a4paper,11pt]{article}
\usepackage{amssymb}
\usepackage{amsmath,amsfonts,amsthm,amssymb}
\usepackage[dvips]{graphics}
\usepackage{epsfig}
\usepackage{indentfirst}
\usepackage{color}
\usepackage{amsmath}
\usepackage{relsize}
\usepackage{comment}
\textheight 21.2 true cm
\textwidth  14.8 true cm
\oddsidemargin=0.4cm
\allowdisplaybreaks
%%% Theorem Like Environments

\newtheoremstyle{theorem}%name
  {10pt}          % space above
  {10pt}  % space below
  {\sl}  % bofy font
 {}% ident - empty=no indent,  \parindent= paragraph indent
  {\bf}  % thm head font
  {. }    % punctuation after thm head
  { }    % space after thm head: `` ``=normal \newline=linebreak
  {}     % thm head specification
\theoremstyle{theorem}

\newtheorem{theorem}{Theorem}[section]

\newtheorem{definition}{Definition}[section]
\newtheorem{lemma}{Lemma}[section]

\newtheorem{remark}{Remark}[section]

\numberwithin{equation}{section}

\newtheoremstyle{defi}%name
{10pt}  % space above
{10pt}  % space below
{\rm}   % bofy font
{}      % ident - empty=no indent,  \parindent= paragraph indent
{\bf}   % thm head font
{. }    % punctuation after thm head
{ }     % space after thm head: `` ``=normal \newline=linebreak
{}      % thm head specification
\theoremstyle{defi}

%%%% Local Definitions start here

\newcommand{\D}{\partial}
\renewcommand{\div}{\textrm{div}\,}

\newcommand{\bee}{\mathbf{u}^E}
\newcommand{\bv}{\mathbf{v}}

\newcommand{\ren}{\varrho_{\varepsilon, n}}

%%%% End of Local Definitions

\definecolor{mypink}{RGB}{219, 48, 122}
\definecolor{myblue}{RGB}{0, 0, 122}

\begin{document}
\baselineskip = 13.5pt

\title{Inviscid limit for the compressible Navier-Stokes equations
  with density dependent viscosity}

\author{Luca Bisconti$^{1}$ \ \ Matteo Caggio$^{2*}$
  \\
  {\small  1. Università degli Studi di Firenze Dipartimento di Matematica e Informatica “U. Dini”} \\
  {\small Viale Morgagni 67/a, I-50134 Firenze, Italia}\\
  {\small  2. Institute of Mathematics of the Academy of Sciences of the Czech Republic,} \\
  {\small \v Zitn\' a 25, 11567, Praha 1, Czech Republic}\\
  {\small * corresponding author: matteocaggio@gmail.com}\\
  {\small luca.bisconti@unifi.it}
\date{}
}

\maketitle

\begin{abstract}
  We consider the compressible Navier-Stokes system describing the
  motion of a barotropic fluid with density dependent viscosity
  confined in a three-dimensional bounded domain $\Omega$. We show the 
  convergence of the weak solution to the compressible Navier-Stokes system to
  the strong solution to the compressible Euler system when the
  viscosity and the damping coefficients tend to zero.  
\end{abstract}

\medskip

 {{\bf Key words:} compressible Navier-Stokes equations, density
    dependent viscosity, inviscid limit, boundary layer.}

  \medskip

  {{\bf 2010 Mathematics Subject Classifications}: 35Q30, 35Q35, 76N10.}

\section{Introduction and main results}\setcounter{equation}{0}
In the three-dimensional smooth bounded domain $\Omega \subset \mathbb{R}^3$ we consider the compressible
Navier-Stokes system describing the motion of a barotropic fluid with
density dependent viscosity,

\begin{gather}
  \label{cont}
  \partial_{t}\varrho_{\varepsilon}+\textrm{div}_{x}\left(\varrho_{\varepsilon}\mathbf{u}_{\varepsilon}\right)=0,
\\[0.2 cm]
  \begin{aligned}
  \partial_{t}\left(\varrho_{\varepsilon}\mathbf{u}_{\varepsilon}\right)+\textrm{div}_{x} &
  \left(\varrho_{\varepsilon}\mathbf{u}_{\varepsilon}\otimes\mathbf{u}_{\varepsilon}\right)
  +  \nabla_{x}p\left(\varrho_{\varepsilon}\right)
  \\[0.1 cm]
  & - 2 \text{div}_{x} (\mu(\varrho_\varepsilon)\mathbb{D}(\mathbf{u}_\varepsilon))
- \nabla_{x}(\lambda(\varrho_\varepsilon)\textrm{div}_{x}\mathbf{u}_\varepsilon)
  + r_1 |\mathbf{u}_\varepsilon|\mathbf{u}_\varepsilon=0,
\end{aligned}  \label{mom}
\end{gather}
supplemented with the initial conditions
\begin{equation} \label{ic} \varrho_\varepsilon (0, \cdot) =
  \varrho_{0, \varepsilon}, \,\,\,\ \varrho_\varepsilon\mathbf{u}_\varepsilon (0, \cdot) =
  \varrho_{0, \varepsilon}  \mathbf{u}_{0, \varepsilon}
\end{equation}
and the boundary conditions
\begin{equation} \label{bc}
  \varrho_\varepsilon \mathbf{u}_\varepsilon|_{\partial \Omega}=0, \,\,\, 
    \left[ \mu(\varrho_\varepsilon)\nabla \log \varrho_\varepsilon \right]
    \times \mathbf{n} |_{\partial  \Omega}= 0.
\end{equation}
where $\mathbf{n}$ is the unit vector normal to the boundary. 

Here, $\varrho_{\varepsilon}=\varrho_{\varepsilon}\left(x,t\right)$,
$\mathbf{u}_{\varepsilon}=\mathbf{u}_{\varepsilon}\left(x,t\right)$ and
$p=p(\varrho_{\varepsilon}\left(x,t\right))$ represent the mass density,
the velocity vector and the pressure of the fluid respectively. This
last is given by a power law type
\begin{equation} \label{press} p(\varrho) = a \varrho^\gamma, \ \ a>0,
  \ \ \gamma > 1.
\end{equation}
The term $r_1|\mathbf{u}_\varepsilon|\mathbf{u}_\varepsilon$
represents
a damping term, $\mathbb{D}(\mathbf{u}_\varepsilon)=(\nabla_x \mathbf{u}_\varepsilon +
\nabla_x^{\top} \mathbf{u}_\varepsilon)/2$ and
the viscosity coefficients $\mu(\varrho_\varepsilon)$ and
$\lambda(\varrho_\varepsilon)$ satisfy the following algebraic
relation
\begin{equation} \label{lm} \lambda(\varrho_\varepsilon) = 2
  \varrho_\varepsilon \mu'(\varrho_\varepsilon) -2
  \mu(\varrho_\varepsilon).
\end{equation}

In the following we consider the case
$\mu(\varrho_\varepsilon)= \varepsilon \varrho_\varepsilon$ and
$\lambda(\varrho_\varepsilon)=0$, with $\varepsilon > 0$ constant viscosity coefficient.
Consequently, the system (\ref{cont}) and (\ref{mom}) reads as follows
\begin{equation} \label{cont-}
  \partial_{t}\varrho_{\varepsilon}+\textrm{div}_{x}\left(\varrho_{\varepsilon}\mathbf{u}_{\varepsilon}\right)=0,
\end{equation}
\begin{equation} \label{mom-}
  \partial_{t}\left(\varrho_{\varepsilon}\mathbf{u}_{\varepsilon}\right)+\textrm{div}_{x}
  \left(\varrho_{\varepsilon}\mathbf{u}_{\varepsilon}\otimes\mathbf{u}_{\varepsilon}\right)
  +\nabla_{x}p\left(\varrho_{\varepsilon}\right)-2\varepsilon\text{div}_{x}
  (\varrho_\varepsilon\mathbb{D}(\mathbf{u}_\varepsilon))
  + r_1 |\mathbf{u}_\varepsilon|\mathbf{u}_\varepsilon=0.
\end{equation}
Formally, letting $(\varepsilon,r_1)\rightarrow0$, one would expect to obtain the compressible Euler equations
\begin{equation} \label{cont-E}
  \partial_t\varrho^E +\textrm{div}_x
  (\varrho^E \mathbf{u}^E) = 0,
\end{equation}
\begin{equation} \label{mom-E}
  \partial_t (\varrho^E \mathbf{u}^E) +
  \div_x(\varrho^E \mathbf{u}^E \otimes \mathbf{u}^E) + \nabla_x
  p(\varrho^E) =0,
\end{equation}
for which we prescribe the initial conditions
\begin{equation} \label{E-ic}
    \varrho^E(0,\cdot)=\varrho_0^E, \,\,\,
    \varrho^E \mathbf{u}^E (0,\cdot) = \varrho_0^E \mathbf{u}_0^E,
\end{equation}
and the boundary condition
\begin{equation} \label{bc-E}
  \bee\cdot\mathbf{n}|_{\partial \Omega_\delta}=0.
\end{equation}
In the present analysis we aim to prove the convergence of the weak solution to t
he compressible Navier-Stokes system \eqref{cont-}, \eqref{mom-} to the strong solution
to the compressible Euler system \eqref{cont-E}, \eqref{mom-E} in the limit of
$(\varepsilon,r_1)\rightarrow0$.

\begin{remark} \label{bc-rho}
  The boundary condition $\eqref{bc}_{2}$
  has been introduced tacitly assuming that, in the system
  \eqref{cont}--\eqref{mom}, $\lambda(\varrho_\varepsilon)=0$. This
boundary condition expresses that the density should be constant on
each connected component of $\partial \Omega$ and has to be understood
in the weak sense (see the Appendix).  Bresch et al. \cite{BDGV} introduced this boundary
condition in order to preserve the well-known Bresch-Desjardins
entropy inequality on smooth enough bounded domains with Dirichlet and
Navier boundary conditions. For further details the reader can refer
to \cite{BDGV}, Section 3.
\end{remark}

The vanishing limit problem dates to the pioneer work of Prandtl
\cite{Pr} that introduced the concept of \textit{boundary layer} and
for the Navier-Stokes equations with no-slip boundary conditions
derived the so-called \textit{Prandtl equations} describing the
boundary layer generated by an incompressible flow near the physical
boundary. In the mathematical context, many interesting results
concerning the Prandtl equations and the vanishing viscosity limit for
the incompressible Navier-Stokes equations have been developed;
cf. see \cite{Gr}, \cite{GuNg}, \cite{LiWa}, \cite{FiMa}, \cite{Mae},
\cite{OlSa}, \cite{SaCa}, \cite{SaCa-1}, \cite{WaWa}, \cite{WaWi},
\cite{XiXi}.

In particular, an approach for proving the convergence from the
solutions of the Navier-Stokes equations to the solutions of the Euler
equation was introduced by Kato \cite{Ka} that studied the vanishing
viscosity limit of the incompressible viscous flow with no-slip
boundary conditions and proved the following conditional result:
\textit{if the energy dissipation rate of the viscous flow in a
  boundary layer of width proportional to the viscosity vanishes, then
  the solutions of the incompressible Navier–Stokes equations converge
  to some solutions of the incompressible Euler equations in the
  energy space.} In other words, the viscous flow can be approximated
by the inviscid flow in the energy space under a dissipation condition
of energy in a neighborhood of the physical boundary with width
proportional to the viscosity, by constructing an artificial or "fake"
boundary layer.

Since the Kato works, the result has been improved by several
authors. Wang \cite{Wa} relaxed Kato’s dissipation condition of energy
to the case only containing the tangential derivatives of the
tangential or normal velocity, but requiring a thicker boundary
layer. Kelliher \cite{Ke} extended Kato’s result replacing the
gradient of velocity of Kato’s energy condition by only the vorticity
of the flow. Finally, under the assumption of the Oleinik condition of
no back-flow in the trace of the Euler flow and of a lower bound for
the Navier-Stokes vorticity in a Kato-like boundary layer, Constantin
et al. \cite{Co} obtained that the inviscid limit from the
Navier-Stokes equations to the Euler equations holds in energy space.

In the compressible case, not much is known.
Sueur \cite{Su} assumed the following (sufficient) conditions in order for the convergence to hold
\begin{equation} \label{Sueur-cond}
    \varepsilon
    \int_{[0,T]\times \Gamma_\varepsilon}
    \left(
    \frac{\varrho_\varepsilon|\mathbf{u}_\varepsilon|^2}{d_\Omega^2(x)}
    +\frac{\varrho_\varepsilon^2(\mathbf{u}_\varepsilon\cdot \mathbf{n})^2}{d_\Omega^2(x)}
    +\mathbb{S}(\nabla_x \mathbf{u}_\varepsilon)
    \right)dxdt \to 0 \mbox{ as } \varepsilon \to 0.
\end{equation}
Here, $\mathbf{u}_\varepsilon\cdot \mathbf{n}$ is the normal component of $\mathbf{u}_\varepsilon$,
$d_\Omega (x)$ the distance of $x \in \Omega$ to the boundary $\partial \Omega$ and
$\Gamma_{c\varepsilon} = \{ x\in \Omega \ : \ d_\Omega(x) \leq c
\varepsilon \}$ for a constant $c > 0$ (in the case of \eqref{Sueur-cond}, we have $c=1$).
Besides (\ref{Sueur-cond}), Bardos and Nguyen \cite{BaNg} introduced other criteria
(see Theorem~1.8). A similar assumption to that in \cite{Su} reads as follows
\begin{equation} \label{Bardos-Nguyen-cond}
    \int_{[0,T]\times \Gamma_\varepsilon}
    \left(
    \frac{\varrho_\varepsilon^\gamma}{\gamma-1}
    +\varepsilon\frac{\varrho_\varepsilon|\mathbf{u}_\varepsilon|^2}{d_\Omega^2(x)}
    +\varepsilon\mathbb{S}(\nabla_x \mathbf{u}_\varepsilon)
    \right)dxdt \to 0 \mbox{ as } \varepsilon \to 0.
\end{equation}
For other results concerning the vanishing viscosity limit in the case
of linearized Navier-Stokes equations, one-dimensional case and
noncharacteristic boundary layer, the reader can refer to \cite{Gue},
\cite{Ro} and \cite{XiZ}, respectively.  The result in \cite{Su} has
been recently improved by Wang and Zhu \cite{WaZh} in the sense that
the authors assumed as sufficient conditions the tangential or the
normal component of velocity only (see relations (2.5) and (2.6) in
Theorem 2.1) at the cost of increasing the width of the boundary
layer.

As mentioned above, the proposal of our analysis is to study the
vanishing viscosity limit for the compressible Navier-Stokes system
with density dependent viscosity. The strategy adopted in order to
prove the convergence relies to the issue of weak-strong uniqueness by
using relative energy estimates. In particular, we introduce a
relative energy functional ``measuring" the distance between the weak
solution of the compressible Navier-Stokes system and the strong
solution of the compressible Euler system. Consequently, we derive a
relative energy inequality satisfied by the weak solution of the
Navier-Stokes equations. As the weak formulation, the relative energy
inequality involves some test functions that have to satisfy the
no-slip conditions, which are not satisfied by the solution of the
Euler equations. For this reasons, we will introduce a ``correction"
based on the Kato's ``fake" boundary layer in order to work with test
functions satisfying the no-slip conditions at the boundaries.
However, different from the construction of Feireisl et
al. \cite{FNS-2011}, \cite{FNJ-2012}, our relative energy inequality
is derived from an ``augmented version" of the compressible
Navier-Stokes system (see Bresch et al. \cite{BNV-2}, \cite{BDZ},
\cite{BGL}; see also \cite{CD},\cite{Cia},\cite{Cia-1} for recent
applications). The reason for that stands in the fact that a $H^1$
bound for the velocity is no longer available because of the density
dependent viscosity. Consequently, standard application of the Korn’s
inequality in the weak-strong uniqueness context is not possible (see,
for example, Feireisl et al. \cite{FNJ-2012}).  We would like to
mention that, as far as the authors are aware, this is the first
result in this direction. A recent result of similar type has been
proved by Geng et al. \cite{Ge} where the authors establish the
convergence in the vanishing viscosity limit of the Navier-Stokes
equations to the Euler equations for three-dimensional compressible
isentropic flow in the whole space $\mathbb{R}^3$ when the viscosity
coefficients are given as constant multiples of the density’s
power. Moreover, a convergence to dissipative solution of compressible
Euler equations has been analyzed in \cite{BNV-2} in the
three-dimensional torus $\mathbb{T}^3$.

Our paper is organized as follows. In this section we introduce the
weak solutions to the compressible Navier-Stokes system \eqref{cont}--\eqref{bc}
together with the existence result. Subsequently, we
discuss the existence of the strong solution to the compressible Euler
system \eqref{cont-E}--\eqref{bc-E} and the ``augmented" version of the
compressible Navier-Stokes system \eqref{cont}--\eqref{bc}. We
conclude the section presenting our main result (see Theorem
\ref{main} below) together with a preliminary Lemma and the a priori
estimates.  Section 2 is devoted to the proof of the our
result. First, we derive a relative energy inequality satisfied by the
weak solutions of the ``augmented" version of the compressible
Navier-Stokes system \eqref{cont}--\eqref{bc}. Second, we introduce
the Kato ``fake" boundary layer and we discuss its properties. Finally,
we perform the inviscid limit.

\subsection{Weak solutions to the compressible Navier-Stokes system}
We introduce the definition of the weak solution to the compressible Navier-Stokes system.
\begin{definition} \label{def-ws}
  We say that $(\varrho_\varepsilon, \mathbf{u}_\varepsilon)$ is a global weak solution of \eqref{cont-} and
  \eqref{mom-} with boundary conditions \eqref{bc} if it satisfies the following regularity properties
\begin{equation*}
  \varrho_\varepsilon \in L^\infty(0,T;L^\gamma(\Omega)), \,\,\, \sqrt{\varrho_\varepsilon}\mathbf{u}_\varepsilon,
  \ \nabla_x \sqrt{\varrho_\varepsilon} \in L^\infty(0,T;L^2(\Omega)),
\end{equation*}
\begin{equation} \label{reg-prop}
\varrho_\varepsilon^{1/3}\mathbf{u}_\varepsilon \in L^3((0,T) \times \Omega),
\end{equation}
as well as $\eqref{bc}_{1}$ in $L^2(0,T;L^1(\partial \Omega))$ and $\eqref{bc}_{2}$
in $L^2(0,T;L^\infty(\partial \Omega))$. The continuity equation is satisfied in the following sense
\begin{equation} \label{mass-ke} -\int_0^T \int_{\Omega}
\varrho_\varepsilon \partial_t \varphi dxdt - \int_0^T \int_{\Omega}
\varrho_\varepsilon \mathbf{u}_\varepsilon \cdot \nabla_{x} \varphi dxdt
= \int_{\Omega} \varrho_\varepsilon(0,\cdot) \varphi(0,\cdot) dx
\end{equation}
for all $\varphi \in C_c^\infty([0,T)\times\Omega;\mathbb{R})$.
The momentum equation is satisfied in the following sense
\begin{equation*}
  -\int_0^T \int_{\Omega} \varrho_\varepsilon \mathbf{u}_\varepsilon
  \cdot \partial_t \boldsymbol{\varphi} dxdt
  -\int_0^T \int_{\Omega} (\varrho_\varepsilon \mathbf{u}_\varepsilon
  \otimes \mathbf{u}_\varepsilon) : \nabla_{x} \boldsymbol{\varphi} dxdt 
  +2\varepsilon\int_0^T \int_{\Omega} \varrho_\varepsilon
  \mathbb{D}(\mathbf{u}_\varepsilon) : \nabla_{x} \boldsymbol{\varphi} dxdt
\end{equation*}
\begin{equation} \label{mom-ke} 
- \int_0^T \int_{\Omega}
p(\varrho_\varepsilon) \mbox{div}_{x}
\boldsymbol{\varphi} dxdt 
+r_1\int_{0}^T\int_{\Omega}\varrho_\varepsilon|\mathbf{u}_\varepsilon|\mathbf{u}_\varepsilon\cdot \boldsymbol{\varphi}dxdt
= \int_{\Omega}
\varrho_\varepsilon \mathbf{u}_\varepsilon(0,\cdot) \cdot \boldsymbol{\varphi}(0,\cdot) dx
\end{equation}
for all $\boldsymbol{\varphi} \in C_c^\infty([0,T)\times\Omega);\mathbb{R}^3)$, where, for $(i,j=1,2,3)$ the viscous term reads as follows
\begin{equation} \label{visc-terms}
\begin{aligned}
    \varepsilon  \int_0^T & \int_{\Omega} \varrho_\varepsilon \mathbb{D}(\mathbf{u}_\varepsilon) : \nabla_{x} \boldsymbol{\varphi} dxdt
\\
    &= -\varepsilon \int_0^T \int_{\Omega}
    \sqrt{\varrho_\varepsilon}\sqrt{\varrho_\varepsilon}(\mathbf{u}_\varepsilon)_j \partial_{ii} \varphi_j dxdt
    - 2\varepsilon \int_0^T \int_{\Omega}
    \sqrt{\varrho_\varepsilon}(\mathbf{u}_\varepsilon)_j \partial_i \sqrt{\varrho_\varepsilon} \partial_i \varphi_j dxdt
\\
& \quad   - \varepsilon \int_0^T \int_{\Omega}
    \sqrt{\varrho_\varepsilon}\sqrt{\varrho_\varepsilon}(\mathbf{u}_\varepsilon)_i \partial_{ji} \varphi_j dxdt
    - 2\varepsilon \int_0^T \int_{\Omega}
    \sqrt{\varrho_\varepsilon}(\mathbf{u}_\varepsilon)_i \partial_j \sqrt{\varrho_\varepsilon} \partial_i \varphi_j dxdt.
  \end{aligned}
\end{equation}
Moreover, there exists $\Lambda$ such that $\varrho_\varepsilon
\mathbf{u}_\varepsilon=\sqrt{\varrho_\varepsilon}\Lambda$, and
$\mathcal{S}\in L^2((0,T)\times\Omega)$ such that $\sqrt{\varrho_\varepsilon}\mathcal{S}=\mbox{Symm}(\nabla(\varrho_\varepsilon
\mathbf{u}_\varepsilon)-2\nabla\sqrt{\varrho_\varepsilon}\otimes\sqrt{\varrho_\varepsilon}\mathbf{u}_\varepsilon)$
in $\mathcal{D}'$, satisfying the following energy inequality
\begin{equation} \label{ee}
  \begin{aligned}
\sup_{t\in(0,T)}\int_{\Omega}\frac{1}{2}\left|\Lambda\right|^{2}
+H& (\varrho_\varepsilon)dx +2\varepsilon\int_{0}^T\int_{\Omega}\left|\mathcal{S}\right|^2dxdt
+r_1\int_{0}^T\int_{\Omega}\varrho_\varepsilon|\mathbf{u}_\varepsilon|^3dxdt
\\
&\leq \int_{\Omega}\frac{1}{2}\varrho_{0,\varepsilon}\left|\mathbf{u}_{0,\varepsilon}\right|^{2}+H(\varrho_{0,\varepsilon})dx,
\end{aligned}
\end{equation}
and there exists $\mathcal{A}\in L^2((0,T)\times\Omega)$ such that
$\sqrt{\varrho_\varepsilon}\mathcal{A}=\mbox{Asymm}(\nabla(\varrho_\varepsilon
\mathbf{u}_\varepsilon)-2\nabla\sqrt{\varrho_\varepsilon}\otimes\sqrt{\varrho_\varepsilon}
\mathbf{u}_\varepsilon)$ in $\mathcal{D}'$, such that the following
Bresch-Desjardins entropy inequality is satisfied
\begin{equation} \label{BD-entropy}
\begin{aligned}
\sup_{t\in(0,T)}& \int_{\Omega}\frac{1}{2}\left|\Lambda
+2\varepsilon\nabla\sqrt{\varrho_\varepsilon}\right|^{2}
+H(\varrho_\varepsilon)dx
+2\varepsilon\int_{0}^T \int_{\Omega}\left|\mathcal{A}\right|^2dxdt
\\
&\quad +\varepsilon\int_{0}^T\int_{\Omega} \frac{p'(\varrho_\varepsilon)}{\varrho_\varepsilon}\left|\nabla\varrho\right|^2dxdt
+r_1\int_{0}^T\int_{\Omega}\varrho_\varepsilon|\mathbf{u}_\varepsilon|^3dxdt
\\
&\quad +\varepsilon r_1\int_{0}^T\int_{\Omega}|\mathbf{u}_\varepsilon|\mathbf{u}_\varepsilon 
\nabla_x \varrho_\varepsilon dxdt
\\
&\leq \int_{\Omega}\frac{1}{2}\left|\sqrt{\varrho_{0,\varepsilon}}\mathbf{u}_{0,\varepsilon}+
  2\varepsilon\nabla\sqrt{\varrho_{0,\varepsilon}}\right|^{2}+H(\varrho_{0,\varepsilon})dx.
\end{aligned}
\end{equation}
\end{definition}

Here $H(\varrho_\varepsilon)$ is such that
\begin{equation*}
  \varrho_\varepsilon H'(\varrho_\varepsilon) - H(\varrho_\varepsilon) =
  p(\varrho_\varepsilon), \,\,\, H''(\varrho_\varepsilon) = \frac{p'(\varrho_\varepsilon)}{\varrho_\varepsilon}.
\end{equation*}
Consequently, we have
\begin{equation*}
    H(\varrho_\varepsilon) = \frac{\varrho_\varepsilon^\gamma}{\gamma - 1}.
\end{equation*}

\begin{remark}
  In \cite{BDGV}, the authors do not define the weak solution
  introducing $\Lambda$, $\mathcal{S}$ and $\mathcal{A}$.  The reason
  that motivates the Definition \ref{def-ws} is related to the fact
  that the “degenerate” viscosity prevents the velocity field to be
  uniquely determined in the vacuum regions, namely regions where
  $\{\varrho_\varepsilon = 0\}$. Indeed, none of the quantities
  $\mathbf{u}_\varepsilon$, $\nabla_x \mathbf{u}_\varepsilon$ and
  $1/\sqrt{\varrho_\varepsilon}$ are defined a.e. in $\Omega$.
  Consequently, the problem is best analyzed in terms of
  $\sqrt{\varrho_\varepsilon}$, $\sqrt{\varrho_\varepsilon}\mathbf{u}_\varepsilon$ and
  $\varrho_\varepsilon \mathbf{u}_\varepsilon = \sqrt{\varrho_\varepsilon}
  \Lambda$ (see, for example, \cite{AnSp}, \cite{AnMa}, \cite{AnSp-1}).
  However, for consistency with the
  literature concerning the Navier-Stokes equations with density
  dependent viscosity, weak solutions are defined in terms of
  $\sqrt{\varrho_\varepsilon}$ and $\sqrt{\varrho_\varepsilon}\mathbf{u}_\varepsilon$.
  For these reasons, the viscous stress tensor in the energy inequality
  \eqref{ee} is thought as
  \begin{equation*}
    \varrho_\varepsilon \mathbb{D}(\mathbf{u}_\varepsilon) = \sqrt{\varrho_\varepsilon}\mathcal{S}.
  \end{equation*}
  Indeed, it is not clear if weak solutions satisfy the energy
  inequality in the usual sense, namely
  \begin{equation*}
    \sup_{t\in(0,T)}\int_{\Omega}\frac{1}{2}\varrho_\varepsilon\left|\mathbf{u}_\varepsilon\right|^{2}
    +H(\varrho_\varepsilon)dx
    +2\varepsilon\int_{0}^T\int_{\Omega}\varrho_\varepsilon\left|\mathbb{D}(\mathbf{u}_\varepsilon)\right|^2dxdt
    +r_1\int_{0}^T\int_{\Omega}\varrho_\varepsilon|\mathbf{u}_\varepsilon|^3dxdt
  \end{equation*}
  \begin{equation} \label{ee-1} \leq
    \int_{\Omega}\frac{1}{2}\varrho_{0,\varepsilon}\left|\mathbf{u}_{0,\varepsilon}\right|^{2}+H(\varrho_{0,\varepsilon})dx.
  \end{equation}
\end{remark}

The following existence result has been proved by Bresch et al. \cite{BDGV}.

\begin{theorem} \label{Th-ws}
  Let the initial data for the compressible Navier-Stokes be given in such a way
\begin{gather*}
    \varrho_{0,\varepsilon} \in L^\gamma(\Omega), \,\,\, \varrho_{0,\varepsilon} \geq 0, \,\,\,
    \nabla_x \sqrt{\varrho_{0,\varepsilon}} \in L^2(\Omega),
\\
    \varrho_{0, \varepsilon}
  \mathbf{u}_{0, \varepsilon} \in L^1(\Omega), \,\,\, \varrho_{0, \varepsilon}
  \mathbf{u}_{0, \varepsilon} = 0 \,\,\,  \text{if} \,\,\, \varrho_{0,\varepsilon}=0, \,\,\, 
  \frac{|\varrho_{0, \varepsilon} \mathbf{u}_{0, \varepsilon}|^2}{\varrho_{0,\varepsilon}} \in L^1(\Omega).
\end{gather*}
Then, for fixed $\varepsilon>0$ and $r_1$, there exist at least a global weak solution to the
compressible Navier-Stokes \eqref{cont-}, \eqref{mom-} with boundary conditions \eqref{bc} in the sense
of Definition~\ref{def-ws}.
\end{theorem}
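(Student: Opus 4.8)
The plan is to obtain the weak solution as the limit of a family of approximate solutions, using the energy inequality \eqref{ee} and the Bresch--Desjardins entropy inequality \eqref{BD-entropy} as the fundamental source of a priori bounds. Since $\varepsilon>0$ and $r_1$ are fixed, every dissipative term in \eqref{ee}--\eqref{BD-entropy} is genuinely coercive, so no uniform-in-$\varepsilon$ control is needed here; the delicate inviscid limit is a separate matter treated later in the paper. I would first construct smooth approximate solutions through a multi-layer regularization of \eqref{cont-}--\eqref{mom-}: a Faedo--Galerkin discretization of the momentum balance, an artificial diffusion $\eta\,\Delta_x\varrho_\varepsilon$ in the continuity equation to keep the density away from vacuum, and additional drag and capillary regularizing terms chosen, following Bresch--Desjardins--Vasseur, so that the structural relation \eqref{lm} underlying the entropy estimate is preserved. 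At this regularized level the velocity is smooth and bounded in $H^1$, so the approximate system can be solved by a fixed-point argument on a short time interval and then continued globally using the energy bound.

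The second step is to collect the a priori estimates. The energy inequality \eqref{ee} yields $\Lambda=\sqrt{\varrho_\varepsilon}\mathbf{u}_\varepsilon\in L^\infty(0,T;L^2(\Omega))$, the bound $\varrho_\varepsilon\in L^\infty(0,T;L^\gamma(\Omega))$ through $H(\varrho_\varepsilon)=\varrho_\varepsilon^\gamma/(\gamma-1)$, together with $\mathcal{S}\in L^2((0,T)\times\Omega)$ and the damping control $r_1\int_0^T\int_\Omega \varrho_\varepsilon|\mathbf{u}_\varepsilon|^3\,dxdt\le C$. The entropy inequality \eqref{BD-entropy} supplies the decisive extra regularity: combining the bound on $\Lambda+2\varepsilon\nabla\sqrt{\varrho_\varepsilon}$ with the energy bound on $\Lambda$ gives $\nabla\sqrt{\varrho_\varepsilon}\in L^\infty(0,T;L^2(\Omega))$, the term $\varepsilon\int_0^T\int_\Omega \frac{p'(\varrho_\varepsilon)}{\varrho_\varepsilon}|\nabla\varrho_\varepsilon|^2\,dxdt$ controls $\nabla\varrho_\varepsilon^{\gamma/2}$ in $L^2$, and $\mathcal{A}\in L^2((0,T)\times\Omega)$. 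The damping term $r_1\varrho_\varepsilon|\mathbf{u}_\varepsilon|^3$ is exactly what produces the integrability \eqref{reg-prop} of $\varrho_\varepsilon^{1/3}\mathbf{u}_\varepsilon$, which is then used to recover compactness of the velocity.

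Third, for compactness and passage to the limit, the bound $\nabla\sqrt{\varrho_\varepsilon}\in L^\infty L^2$ together with the time-derivative control from \eqref{cont-} gives, via Aubin--Lions, strong convergence of $\sqrt{\varrho_\varepsilon}$ and hence of $\varrho_\varepsilon$ in suitable $L^p$ spaces, which in particular identifies the limit of the pressure $p(\varrho_\varepsilon)$. The genuinely hard point is the convective term $\varrho_\varepsilon\mathbf{u}_\varepsilon\otimes\mathbf{u}_\varepsilon$: since the viscosity degenerates in vacuum, no $H^1$ bound on $\mathbf{u}_\varepsilon$ is available and the limit of the product cannot be identified directly. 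I would resolve this by a Mellet--Vasseur type stability argument, upgrading the weak convergence of the momentum $\sqrt{\varrho_\varepsilon}\mathbf{u}_\varepsilon$ to strong convergence in $L^2$ by exploiting the additional integrability furnished by $\varrho_\varepsilon|\mathbf{u}_\varepsilon|^3$. This is precisely where working with $\Lambda$, $\mathcal{S}$ and $\mathcal{A}$ rather than with $\mathbf{u}_\varepsilon$ itself becomes essential, since it lets the limiting objects be defined consistently even on the vacuum set $\{\varrho_\varepsilon=0\}$.

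Finally, I would send the regularization parameters to zero in the weak formulations \eqref{mass-ke}--\eqref{mom-ke}, recover \eqref{ee} and \eqref{BD-entropy} by weak lower semicontinuity of the dissipation functionals, and check that the boundary conditions \eqref{bc} are retained in the weak sense, in particular the Bresch--Desjardins condition $\eqref{bc}_{2}$ whose role is explained in Remark \ref{bc-rho}. The main obstacle throughout is the vacuum-induced loss of the $H^1$ velocity estimate, and the Mellet--Vasseur compactness step, made available by the damping term, is the crux that overcomes it.
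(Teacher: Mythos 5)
The paper offers no proof of this statement: Theorem \ref{Th-ws} is quoted directly from Bresch, Desjardins and G\'erard-Varet \cite{BDGV}, and your outline --- a BD-entropy-preserving regularization, a priori bounds from \eqref{ee} and \eqref{BD-entropy}, and a Mellet--Vasseur type compactness argument for $\sqrt{\varrho_\varepsilon}\mathbf{u}_\varepsilon$ made possible by the drag-induced bound on $\varrho_\varepsilon|\mathbf{u}_\varepsilon|^3$ --- is essentially the strategy carried out in that reference and the related literature. The one point your sketch underplays is the part that is actually specific to this theorem: on a bounded domain the Bresch--Desjardins entropy estimate produces boundary terms under integration by parts, and controlling them is precisely what forces the additional boundary condition $\eqref{bc}_{2}$ on $\mu(\varrho_\varepsilon)\nabla\log\varrho_\varepsilon$; this boundary analysis, rather than the compactness machinery, is the real content of \cite{BDGV}.
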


\begin{remark}
  Theorem \ref{Th-ws} extends to smooth enough bounded domains
  existence results about barotropic compressible Navier–Stokes
  systems with density dependent viscosity coefficients. The authors
  in \cite{BDGV} proved the existence of global weak solutions for
  Dirichlet and Navier boundary conditions on the velocity. An
  additional turbulent drag term in the momentum equation is used to
  handle the construction of approximate solutions.
\end{remark}

\subsection{Strong solution to the compressible Euler system}

We recall the local existence of strong solution for the comnpressible Euler system (see \cite{Ag}, \cite{Ve}, \cite{Eb1}, \cite{Eb2}, \cite{Sc})
\begin{theorem} \label{E-str}
  Let $(\varrho_0^E, \mathbf{u}_0^E) \in C^{1+\delta}$, $\delta > 0$ be some compatible initial data with
  $0 < \inf_\Omega \varrho_0^E$ and $\sup_\Omega \varrho_0^E < \infty$. Then, there exists $T>0$ and a unique solution
\begin{equation} \label{E-int}
    (\varrho^E, \mathbf{u}^E) \in C_w ([0,T]; C^{1+\delta}(\Omega)) \ \cap \ C^1([0,T]\times \overline{\Omega})
\end{equation}
of \eqref{cont-E}--\eqref{bc-E} such that
\begin{equation} \label{E-vac}
    0 < \inf_{(0,T)\times\Omega} \varrho^E   \,\,\,   \text{and}   \,\,\,  \sup_{(0,T)\times\Omega} \varrho^E < \infty.
\end{equation}
\end{theorem}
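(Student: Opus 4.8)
The plan is to recast the Euler system \eqref{cont-E}--\eqref{mom-E} in nonconservative form and invoke the classical local well-posedness theory for symmetric hyperbolic systems. First I would use the continuity equation to rewrite the momentum balance as a transport-type equation for $\bee$, so that the system becomes
\[
\partial_t\ree + \bee\cdot\nabla_x\ree + \ree\,\div\bee = 0, \qquad
\partial_t\bee + (\bee\cdot\nabla_x)\bee + \frac{p'(\ree)}{\ree}\,\nabla_x\ree = 0.
\]
Since $p(\varrho)=a\varrho^\gamma$ and the data obey $0<\inf_\Omega\varrho_0^E$, $\sup_\Omega\varrho_0^E<\infty$, on the range where the density stays in a compact subset of $(0,\infty)$ I would introduce the rescaled sound speed $\sigma=\tfrac{2}{\gamma-1}\sqrt{p'(\ree)}$ and take $(\sigma,\bee)$ as the unknown; this produces a Friedrichs-symmetrizable quasilinear system $A_0(U)\partial_t U+\sum_j A_j(U)\partial_{x_j}U=0$ with $A_0$ positive definite and each $A_j$ symmetric, uniformly as long as $\ree$ stays away from $0$ and $\infty$.

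Next I would build the solution by a linearized iteration. Given an approximation $U^{(k)}$ satisfying the slip condition $\mathbf{u}^{(k)}\cdot\mathbf{n}=0$ on $\partial\Omega$, I solve the linear symmetric hyperbolic problem with coefficients frozen at $U^{(k)}$ to obtain $U^{(k+1)}$. The density component solves a linear transport equation $\partial_t\varrho+\mathbf{u}^{(k)}\cdot\nabla_x\varrho=-\varrho\,\div\mathbf{u}^{(k)}$, hence along the flow of $\mathbf{u}^{(k)}$ it obeys a scalar linear ODE; this keeps $\varrho^{(k+1)}$ strictly positive and finite on a short time interval and preserves the bounds \eqref{E-vac}. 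The core of the argument is the a priori estimate: I would derive a bound on the $C^{1+\delta}$ norm of $U^{(k)}$, uniform in $k$, on a time interval $[0,T]$ depending only on the data, then extract a limit lying in $C_w([0,T];C^{1+\delta}(\Omega))\cap C^1([0,T]\times\overline{\Omega})$ that solves \eqref{cont-E}--\eqref{bc-E}. Uniqueness I would obtain by writing the system for the difference of two solutions with the same data and running an $L^2$ energy estimate: the symmetric structure makes the boundary contribution $\int_{\partial\Omega}\big(\sum_j A_j n_j\big)\,U\cdot U\,\mathrm{d}S$ vanish precisely because $\bee\cdot\mathbf{n}=0$ on $\partial\Omega$, and Gr\"onwall's lemma then forces the difference to be zero.

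The hard part will be the characteristic nature of the boundary. The slip condition $\bee\cdot\mathbf{n}=0$ is maximally dissipative (in fact conservative) for the symmetrized system, but the normal matrix $\sum_j A_j n_j$ degenerates on $\{\bee\cdot\mathbf{n}=0\}$, so $\partial\Omega$ is characteristic and one cannot recover full normal regularity from the usual non-characteristic estimates. The standard remedy, which I would follow, is to estimate tangential derivatives together with the divergence and vorticity separately and to recover the missing normal derivative from the equations themselves, working in a conormal H\"older framework; this loss of normal smoothness is exactly why the statement is phrased with the weak-in-time class $C_w([0,T];C^{1+\delta})$ rather than $C([0,T];C^{1+\delta})$, and why the compatibility of the initial data is assumed. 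For the technical closure of these characteristic-boundary estimates I would rely on the constructions in the cited works \cite{Ag}, \cite{Eb1}, \cite{Eb2}, \cite{Sc}.
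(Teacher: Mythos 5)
The paper does not prove this theorem at all: it is recalled verbatim from the cited literature (\cite{Ag}, \cite{Ve}, \cite{Eb1}, \cite{Eb2}, \cite{Sc}), so there is no in-paper argument to compare against. Your sketch is a faithful outline of the classical proof in exactly those references --- symmetrization via the sound-speed variable, linearized iteration, $L^2$ energy uniqueness exploiting $\mathbf{u}^E\cdot\mathbf{n}=0$, and the correct identification of the characteristic boundary as the real difficulty, to be handled by tangential/conormal estimates with the normal derivative recovered from the equations --- and at precisely that hard step you defer to the same works the paper cites, which is the honest thing to do since closing those estimates is the substance of Schochet's and Ebin's papers. So your proposal is consistent with (and no less complete than) the paper's treatment; the only caveat is that it is a proof plan rather than a proof, with the genuinely delicate characteristic-boundary machinery outsourced, and your stated reason for the $C_w([0,T];C^{1+\delta})$ class (loss of normal regularity) is only part of the story --- weak-in-time continuity also reflects the general failure of strong time-continuity in nonseparable H\"older spaces for solutions obtained by compactness.
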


\begin{remark}
  As remarked by Sueur \cite{Su}, "compatible" refers to some
  conditions satisfied by the initial data on the boundary
  $\partial \Omega$ which are necessary for the existence of strong
  solution (see \cite{RaMa}, \cite{Sc} for more details).
\end{remark}

\subsection{``Augmented" version of the compressible Navier-Stokes system}
As observed in \cite{BDZ}, the system \eqref{cont-}--\eqref{mom-} can
be reformulated through an ``augmented" version. Indeed,
defining the velocity
\begin{equation} \label{v_eps} \mathbf{v}_\varepsilon =
  \mathbf{u}_\varepsilon + \varepsilon
  \nabla_{x}\log\varrho_\varepsilon
\end{equation}
with
\begin{equation} \label{w_eps} \mathbf{w}_\varepsilon =
  \varepsilon\nabla_{x} \log \varrho_\varepsilon,
\end{equation}
the ``augmented" version of the Navier-Stokes system reads as follows
\begin{gather}
  \partial_t \varrho_\varepsilon + \textrm{div}_{x}
  (\varrho_\varepsilon \mathbf{u}_\varepsilon) = 0, \label{cont-ag}
  \\[0.2 cm]
  \begin{aligned} \label{mom1-ag} \partial_t (&\varrho_\varepsilon
    \mathbf{v}_\varepsilon) + \textrm{div}_{x} (\varrho_\varepsilon
    \mathbf{v}_\varepsilon \otimes \mathbf{u}_\varepsilon) +
    \nabla_{x} p(\varrho_\varepsilon) +r_1 \varrho_\varepsilon
    |\mathbf{v}_\varepsilon -
    \mathbf{w}_\varepsilon|(\mathbf{v}_\varepsilon -
    \mathbf{w}_\varepsilon)
    \\[0.2 cm]
    &- \varepsilon \textrm{div}_x (\varrho_\varepsilon
    \mathbb{D}(\mathbf{v}_\varepsilon)) - \varepsilon \textrm{div}_{x}
    ( \varrho_\varepsilon \mathbb{A}(\mathbf{v}_\varepsilon)) +
    \varepsilon \textrm{div}_{x} \left(\varrho_\varepsilon \nabla_{x}
      \mathbf{w}_\varepsilon \right)=0,
  \end{aligned}
  \\[0.2 cm]
  \begin{aligned} \label{mom2-ag} \partial_t (\varrho_\varepsilon
    \mathbf{w}_\varepsilon) + \textrm{div}_{x} (\varrho_\varepsilon
    \mathbf{w}_\varepsilon \otimes \mathbf{u}_\varepsilon)
    +\varepsilon \textrm{div}_{x} \left(\varrho_\varepsilon
      \nabla_{x}^\top \mathbf{u}_\varepsilon\right)=0
  \end{aligned}
\end{gather}
supplemented with the following boundary conditions
\begin{equation} \label{bc-ag}
\left[ \varrho_\varepsilon (\mathbf{v}_\varepsilon -\mathbf{w}_\varepsilon) \right]
|_{\partial  \Omega}=0, \,\,\, 
    \left[  \varrho_\varepsilon  \mathbf{w}_\varepsilon \right] \times \mathbf{n} |_{\partial
    \Omega}= 0,
\end{equation}
meant in the sense of distributions on $\D\Omega$ (see the Appendix).

\begin{remark} \label{deriv}
  We discuss here how to derive the equations (\ref{mom1-ag}) and (\ref{mom2-ag}).
  We start with (\ref{mom2-ag}). From the continuity equation, we have
\begin{equation*}
    2\varepsilon \partial_t (\varrho_\varepsilon \nabla_{x} \log \varrho_\varepsilon)
    = -2\varepsilon\nabla_{x} \text{div}_{x}(\varrho_\varepsilon\mathbf{u}_\varepsilon).
\end{equation*}
Now, the following identity holds (see Antonelli and Spirito \cite{AnSp})
\begin{align*}
    2\varepsilon\nabla_{x} \text{div}_{x}(\varrho_\varepsilon\mathbf{u}_\varepsilon)
    &=2\varepsilon\text{div}_{x}(\varrho_\varepsilon\mathbf{u}_\varepsilon\otimes\nabla_{x}\log \varrho_\varepsilon
    +\varrho_\varepsilon\nabla_{x}\log\varrho_\varepsilon\otimes\mathbf{u}_\varepsilon)
\\
   & \quad -2\varepsilon\Delta(\varrho_\varepsilon\mathbf{u}_\varepsilon)
    +4\varepsilon\text{div}_{x}(\varrho_\varepsilon\mathbb{D}(\mathbf{u}_\varepsilon)).
\end{align*}
Thus
\begin{align*}
    2\varepsilon \partial_t (\varrho_\varepsilon \nabla_{x} \log \varrho_\varepsilon)
    &= -2\varepsilon\text{div}_{x}(\varrho_\varepsilon\mathbf{u}_\varepsilon\otimes\nabla_{x}\log \varrho_\varepsilon
    +\varrho_\varepsilon\nabla_{x}\log\varrho_\varepsilon\otimes\mathbf{u}_\varepsilon)
\\[0.1 cm]
   &\quad  +2\varepsilon\Delta(\varrho_\varepsilon\mathbf{u}_\varepsilon)
    -4\varepsilon\text{div}_{x}(\varrho_\varepsilon\mathbb{D}(\mathbf{u}_\varepsilon))
\\[0.1 cm]
  &  = -2\varepsilon\text{div}_{x}(\varrho_\varepsilon\mathbf{u}_\varepsilon\otimes\nabla_{x}\log \varrho_\varepsilon
    +\varrho_\varepsilon\nabla_{x}\log\varrho_\varepsilon\otimes\mathbf{u}_\varepsilon)
\\[0.1 cm]
  &\quad   +2\varepsilon\Delta(\varrho_\varepsilon\mathbf{u}_\varepsilon)
    -2\varepsilon\text{div}_{x}(\varrho_\varepsilon (\nabla_{x} \mathbf{u}_\varepsilon + \nabla_x^{\top} \mathbf{u}_\varepsilon)).
\\[0.1 cm]
 &   = -2\varepsilon\text{div}_{x}(\varrho_\varepsilon\mathbf{u}_\varepsilon\otimes\nabla_{x}\log \varrho_\varepsilon
    +\varrho_\varepsilon\nabla_{x}\log\varrho_\varepsilon\otimes\mathbf{u}_\varepsilon)
\\[0.1 cm]
 &\quad   +2\varepsilon
    \text{div}_{x}
    (\mathbf{u}_\varepsilon \otimes \nabla_{x} \varrho_\varepsilon
    +\varrho_\varepsilon \nabla_{x}\mathbf{u}_\varepsilon)
    -2\varepsilon\text{div}_{x}(\varrho_\varepsilon (\nabla_{x} \mathbf{u}_\varepsilon + \nabla_x^{\top} \mathbf{u}_\varepsilon))
\\[0.1 cm]
&    = -2\varepsilon\text{div}_{x}(\varrho_\varepsilon\mathbf{u}_\varepsilon\otimes\nabla_{x}\log \varrho_\varepsilon
    +\varrho_\varepsilon\nabla_{x}\log\varrho_\varepsilon\otimes\mathbf{u}_\varepsilon)
\\[0.1 cm]
&  \quad  +2\varepsilon \text{div}_{x} (\mathbf{u}_\varepsilon  \otimes\nabla_{x} \varrho_\varepsilon)
    -2\varepsilon\text{div}_{x}(\varrho_\varepsilon \nabla_x^{\top} \mathbf{u}_\varepsilon).
\end{align*}
Now, we can write
\begin{equation*}
    2\varepsilon \text{div}_{x} (\mathbf{u}_\varepsilon \otimes\nabla_{x} \varrho_\varepsilon)=
    2\varepsilon \text{div}_{x}
    (\varrho_\varepsilon\mathbf{u}_\varepsilon 
    \otimes\nabla_{x}\log \varrho_\varepsilon),
\end{equation*}
and it cancels with its counterpart.
Consequently
\begin{equation*}
    \varepsilon \partial_t (\varrho_\varepsilon \nabla_{x} \log \varrho_\varepsilon)
    +\varepsilon\text{div}_{x}(\varrho_\varepsilon\nabla_{x}\log\varrho_\varepsilon
    \otimes\mathbf{u}_\varepsilon)+\varepsilon\text{div}_{x}(\varrho_\varepsilon
    \nabla_x^{\top} \mathbf{u}_\varepsilon)=0.
\end{equation*}
From the definition of $\mathbf{w}_\varepsilon$ we obtain relation
\eqref{mom2-ag}. Equation \eqref{mom1-ag} follows from summing up
\eqref{mom-} with \eqref{mom2-ag}.
\end{remark}

Let us now introduce the notion of the weak solution to the system \eqref{cont-ag}--\eqref{mom2-ag}.
\begin{definition} \label{def-aug}
  We say that $(\varrho_\varepsilon, \mathbf{v}_\varepsilon,\mathbf{w}_\varepsilon)$ is
  a global weak solution of \eqref{cont-ag} and \eqref{mom2-ag} with boundary conditions \eqref{bc-ag}
  if it satisfies the following regularity properties
\begin{equation} \label{reg-prop-ag}
  \varrho_\varepsilon \in L^\infty(0,T;L^\gamma(\Omega)), \,\,\, \sqrt{\varrho_\varepsilon}\mathbf{v}_\varepsilon,
  \,\,\, \nabla_x \sqrt{\varrho_\varepsilon} \in L^\infty(0,T;L^2(\Omega)).
\end{equation}
The continuity equation is satisfied in the following sense
\begin{equation} \label{mass-ag}
  \begin{aligned}
- \int_{\Omega} \varrho_\varepsilon(T,\cdot) \varphi(T,\cdot) dx
&+ \int_{\Omega} \varrho_\varepsilon(0,\cdot) \varphi(0,\cdot) dx
\\
&\quad + \int_0^T \int_{\Omega} \varrho_\varepsilon \partial_t \varphi dxdt + \int_0^T \int_{\Omega}
\varrho_\varepsilon \mathbf{u}_\varepsilon \cdot \nabla_{x} \varphi dxdt =0,
\end{aligned}
\end{equation}
for all $\varphi \in C_c^\infty([0,T]\times\Omega;\mathbb{R})$.
The momentum equations are satisfied in the following sense
\allowdisplaybreaks[1]
\begin{equation} \label{mom-ag-v} 
\begin{gathered}
    -\int_{\Omega} 
    \varrho_\varepsilon \mathbf{v}_\varepsilon(T,\cdot) \cdot \boldsymbol{\varphi}(T,\cdot) dx
    +\int_{\Omega}
    \varrho_\varepsilon \mathbf{v}_\varepsilon(0,\cdot) \cdot \boldsymbol{\varphi}(0,\cdot) dx
  +\int_0^T \int_{\Omega} \varrho_\varepsilon \mathbf{v}_\varepsilon
  \cdot \partial_t \boldsymbol{\varphi} dxdt
  \\
  +\int_0^T \int_{\Omega} (\varrho_\varepsilon \mathbf{v}_\varepsilon \otimes \mathbf{u}_\varepsilon) : \nabla_{x} \boldsymbol{\varphi} dxdt 
  -\varepsilon\int_0^T \int_{\Omega} \varrho_\varepsilon \mathbb{D}(\mathbf{v}_\varepsilon) : \nabla_{x} \boldsymbol{\varphi} dxdt
\\
    -\varepsilon\int_0^T \int_{\Omega} \varrho_\varepsilon \mathbb{A}(\mathbf{v}_\varepsilon) : \nabla_{x} \boldsymbol{\varphi} dxdt
    + \varepsilon\int_0^T \int_{\Omega}
    \varrho_\varepsilon \nabla_x \mathbf{w}_\varepsilon: \nabla_{x} \boldsymbol{\varphi} dxdt 
\\
    -r_1\int_0^T \int_{\Omega}
    \varrho_\varepsilon |\mathbf{v}_\varepsilon - \mathbf{w}_\varepsilon|(\mathbf{v}_\varepsilon - \mathbf{w}_\varepsilon)
    \cdot \boldsymbol{\varphi}
    + \int_0^T \int_{\Omega}  p(\varrho_\varepsilon) \mbox{div}_{x} \boldsymbol{\varphi} dxdt  =0,
  \end{gathered}
\end{equation}
and
\begin{equation} \label{mom-ag-w}
  \begin{gathered}
    -\int_{\Omega} \varrho_\varepsilon \mathbf{w}_\varepsilon(T,\cdot)
    \cdot \boldsymbol{\varphi}(T,\cdot) dx +\int_{\Omega}
    \varrho_\varepsilon \mathbf{w}_\varepsilon(0,\cdot) \cdot
    \boldsymbol{\varphi}(0,\cdot) dx +\int_0^T \int_{\Omega}
    \varrho_\varepsilon \mathbf{w}_\varepsilon \cdot \partial_t
    \boldsymbol{\varphi} dxdt
    \\
    +\int_0^T \int_{\Omega} (\varrho_\varepsilon
    \mathbf{w}_\varepsilon \otimes \mathbf{u}_\varepsilon) :
    \nabla_{x} \boldsymbol{\varphi} dxdt -\varepsilon\int_0^T
    \int_{\Omega} \varrho_\varepsilon \nabla_x^{\top}
    \mathbf{u}_\varepsilon : \nabla_{x} \boldsymbol{\varphi} dxdt =0,
  \end{gathered}
\end{equation}
for all
$\boldsymbol{\varphi} \in
C_c^\infty([0,T]\times\Omega);\mathbb{R}^3)$, where, for $i,j=1,2,3$,
the viscous terms reads as follows %(in the right-hand sides of the next
%integrals relations we omit $dxdt$ to keep the notation concise), i.e.
%
\begin{equation} \label{visc-terms-D}
  \begin{aligned}
    \varepsilon\int_0^T &\int_{\Omega} \varrho_\varepsilon
    \mathbb{D}(\mathbf{v}_\varepsilon) : \nabla_{x}
    \boldsymbol{\varphi} dxdt
    \\
    & = -\varepsilon \int_0^T \int_{\Omega}
    \sqrt{\varrho_\varepsilon}\sqrt{\varrho_\varepsilon}(\mathbf{v}_\varepsilon)_j
    \partial_{ii} \varphi_j dxdt - 2\varepsilon \int_0^T \int_{\Omega}
    \sqrt{\varrho_\varepsilon}(\mathbf{v}_\varepsilon)_j \partial_i
    \sqrt{\varrho_\varepsilon} \partial_i \varphi_j dxdt
    \\
    & \quad - \varepsilon \int_0^T \int_{\Omega}
    \sqrt{\varrho_\varepsilon}\sqrt{\varrho_\varepsilon}(\mathbf{v}_\varepsilon)_i
    \partial_{ji} \varphi_j dxdt- 2\varepsilon \int_0^T \int_{\Omega}
    \sqrt{\varrho_\varepsilon}(\mathbf{v}_\varepsilon)_i \partial_j
    \sqrt{\varrho_\varepsilon} \partial_i \varphi_j dxdt,
  \end{aligned}
\end{equation}

\begin{equation} \label{visc-terms-A}
  \begin{aligned}
    \varepsilon\int_0^T & \int_{\Omega}  \varrho_\varepsilon
    \mathbb{A}(\mathbf{v}_\varepsilon) : \nabla_{x}
    \boldsymbol{\varphi} dxdt
    \\
    & = -\varepsilon \int_0^T \int_{\Omega}
    \sqrt{\varrho_\varepsilon}\sqrt{\varrho_\varepsilon}(\mathbf{v}_\varepsilon)_j
    \partial_{ii} \varphi_jdxdt - 2\varepsilon \int_0^T \int_{\Omega}
    \sqrt{\varrho_\varepsilon}(\mathbf{v}_\varepsilon)_j \partial_i
    \sqrt{\varrho_\varepsilon} \partial_i \varphi_jdxdt
    \\
    &\quad + \varepsilon \int_0^T \int_{\Omega}
    \sqrt{\varrho_\varepsilon}\sqrt{\varrho_\varepsilon}(\mathbf{v}_\varepsilon)_i
    \partial_{ji} \varphi_jdxdt + 2\varepsilon \int_0^T \int_{\Omega}
    \sqrt{\varrho_\varepsilon}(\mathbf{v}_\varepsilon)_i \partial_j
    \sqrt{\varrho_\varepsilon} \partial_i \varphi_jdxdt,
  \end{aligned}
\end{equation}

\begin{equation} \label{visc-terms-w}
  \begin{aligned}
    \varepsilon\int_0^T & \int_{\Omega} \varrho_\varepsilon \nabla_x
    \mathbf{w}_\varepsilon: \nabla_{x} \boldsymbol{\varphi} dxdt
    \\
    & = - \varepsilon \int_0^T \int_{\Omega}
    \sqrt{\varrho_\varepsilon}\sqrt{\varrho_\varepsilon}(\mathbf{w}_\varepsilon)_j
    \partial_{ii} \varphi_j dxdt- 2\varepsilon \int_0^T \int_{\Omega}
    \sqrt{\varrho_\varepsilon}(\mathbf{w}_\varepsilon)_j \partial_i
    \sqrt{\varrho_\varepsilon} \partial_i \varphi_jdxdt,
  \end{aligned}
\end{equation}
and
\begin{equation} \label{visc-terms-ut}
  \begin{aligned}
    \varepsilon \int_0^T & \int_{\Omega} \varrho_\varepsilon \nabla_x^{\top}
    \mathbf{u}_\varepsilon : \nabla_{x} \boldsymbol{\varphi} dxdt
    \\
    & = - \varepsilon \int_0^T \int_{\Omega}
    \sqrt{\varrho_\varepsilon}\sqrt{\varrho_\varepsilon}(\mathbf{u}_\varepsilon)_i
    \partial_{ji} \varphi_j dxdt- 2\varepsilon \int_0^T \int_{\Omega}
    \sqrt{\varrho_\varepsilon}(\mathbf{u}_\varepsilon)_i \partial_j
    \sqrt{\varrho_\varepsilon} \partial_i \varphi_jdxdt
    \\
    & \leq \int_{\Omega}\frac{1}{2}\left(\varrho_{0,\varepsilon}
      \left|\mathbf{u}_{0,\varepsilon}\right|^{2}+H(\varrho_{0,\varepsilon})\right)dx.
  \end{aligned}
\end{equation}
Moreover, there exists $\Lambda$ such that
$\varrho_\varepsilon \mathbf{u}_\varepsilon=\sqrt{\varrho_\varepsilon}\Lambda$, and there
exists $\mathcal{A}\in L^2((0,T)\times\Omega)$ such that
$\sqrt{\varrho_\varepsilon}\mathcal{A}=\mbox{Asymm}(\nabla(\varrho_\varepsilon
\mathbf{u}_\varepsilon)-2\nabla\sqrt{\varrho_\varepsilon}\otimes\sqrt{\varrho_\varepsilon}
\mathbf{u}_\varepsilon)$ in $\mathcal{D}'$, such that the following
Bresch-Desjardins entropy inequality is satisfied
\allowdisplaybreaks[1]  
\begin{equation} \label{k-entropy}
  \begin{aligned}
\sup_{t\in(0,T)}&\int_{\Omega} \left(\frac{1}{2}\left(\left|\Lambda
+2\varepsilon\nabla\sqrt{\varrho_\varepsilon}\right|^{2}
+|2\varepsilon\nabla\sqrt{\varrho_\varepsilon}|^2
\right) +H(\varrho_\varepsilon)\right)dx
\\
&\quad +\varepsilon\int_{0}^T\int_{\Omega}\left|\mathcal{S}\right|^2dxdt
+\varepsilon\int_{0}^T\int_{\Omega}\left|\mathcal{A}\right|^2dxdt
+\varepsilon\int_{0}^T\int_{\Omega}
\frac{p'(\varrho_\varepsilon)}{\varrho_\varepsilon}\left|\nabla\varrho\right|^2dxdt
\\
&\quad +r_1\int_{\Omega}\varrho_\varepsilon|\mathbf{u}_\varepsilon|^3dxdt
+\varepsilon r_1\int_{0}^T\int_{\Omega}|\mathbf{u}_\varepsilon|\mathbf{u}_\varepsilon 
\nabla_x \varrho_\varepsilon dxdt
\\
&\leq \int_{\Omega}\left(\frac{1}{2}\left(\left|\sqrt{\varrho_{0,\varepsilon}}\mathbf{u}_{0,\varepsilon}+
2\varepsilon\nabla\sqrt{\varrho_{0,\varepsilon}}\right|^{2}
+|2\varepsilon\nabla\sqrt{\varrho_{0,\varepsilon}}|^2 \right) +H(\varrho_{0,\varepsilon})\right)dx.
\end{aligned}
\end{equation}
\end{definition}

As the authors remarked in \cite{BGL}, a global weak solution
$(\varrho_\varepsilon, \mathbf{u}_\varepsilon)$ of the compressible
Navier-Stokes system is also a solution of the augmented version.
Consequently, Theorem~\ref{Th-ws} holds for weak solutions to the system
\eqref{cont-ag}--\eqref{mom2-ag} in the sense of Definition~\ref{def-aug}.

\begin{remark} \label{ei-deriv}
  Relation (\ref{k-entropy}) has been originally derived on a
  three-dimensional torus, $\mathbb{T}^3$, in \cite{BNV-2}. In the
  case of a bounded domain, it is possible to obtain the same relation
  thanks to the boundary conditions \eqref{bc}.  Indeed, equation
  \eqref{mom1-ag} can be rewritten as follows
  \begin{equation} \label{ei-c1} 
    \begin{gathered}
  \partial_t (\varrho_\varepsilon \mathbf{v}_\varepsilon)
  + \textrm{div}_{x} (\varrho_\varepsilon \mathbf{v}_\varepsilon \otimes \mathbf{u}_\varepsilon)
  + \nabla_{x} p(\varrho_\varepsilon) +r_1 \varrho_\varepsilon |\mathbf{v}_\varepsilon
  - \mathbf{w}_\varepsilon|(\mathbf{v}_\varepsilon - \mathbf{w}_\varepsilon)
\\
- \varepsilon \textrm{div}
  (\varrho_\varepsilon \mathbb{D}(\mathbf{u}_\varepsilon)) -
  \varepsilon \textrm{div}_{x} ( \varrho_\varepsilon
  \mathbb{A}(\mathbf{u}_\varepsilon)) =0.
\end{gathered}
\end{equation}

We multiply equation \eqref{ei-c1} by $\mathbf{v}_\varepsilon$.
Using the continuity equation and integration by parts, we obtain
\begin{equation} \label{ei-c2}
  \begin{aligned}
    \frac{d}{dt} \int_\Omega & \left(  \varrho_\varepsilon \frac{|\mathbf{v}_\varepsilon|^2}{2}
      + H(\varrho_\varepsilon)  \right)dx  + \int_\Omega \nabla p(\varrho_\varepsilon) \cdot \mathbf{w}_\varepsilon dx
        \\
   &\quad     +r_1 \int_\Omega \varrho_\varepsilon |\mathbf{v}_\varepsilon 
    - \mathbf{w}_\varepsilon|(\mathbf{v}_\varepsilon - \mathbf{w}_\varepsilon)\mathbf{v}_\varepsilon dx
 + \int_\Omega \varrho_\varepsilon \nabla \mathbf{v}_\varepsilon \left(
    \mathbb{D}(\mathbf{u}_\varepsilon)+\mathbb{A}(\mathbf{u}_\varepsilon)  \right) dx
    \\
   &\quad   +\int_{\partial \Omega} \frac{|\mathbf{v}_\varepsilon|^2}{2}
    \varrho_\varepsilon \mathbf{u}_\varepsilon \cdot \mathbf{n} ds
    +\int_{\partial \Omega}  H(\varrho_\varepsilon) \mathbf{u}_\varepsilon \cdot \mathbf{n} ds
    -\varepsilon \int_{\partial \Omega}
    \varrho_\varepsilon \mathbb{D}(\mathbf{u}_\varepsilon) \mathbf{v}_\varepsilon \mathbf{n} ds
        \\
   &\quad   -\varepsilon \int_{\partial \Omega}  \varrho_\varepsilon \mathbb{A}(\mathbf{u}_\varepsilon)
   \mathbf{v}_\varepsilon \mathbf{n}ds =0.
  \end{aligned}
\end{equation}
Thanks to the boundary condition \eqref{bc}, we have
\begin{equation*}
    \int_{\partial \Omega} \frac{|\mathbf{v}_\varepsilon|^2}{2}
    \varrho_\varepsilon \mathbf{u}_\varepsilon \cdot \mathbf{n}ds
    =\int_{\partial \Omega}
    H(\varrho_\varepsilon) \mathbf{u}_\varepsilon \cdot \mathbf{n}ds=0,
\end{equation*}
while, because $\varrho_\varepsilon\mathbf{v}_\varepsilon = \varrho_\varepsilon\mathbf{w}_\varepsilon$
on $\partial \Omega$, we have
\begin{equation*}
    -\int_{\partial \Omega}
    \varrho_\varepsilon \mathbb{A}(\mathbf{u}_\varepsilon) \mathbf{v}_\varepsilon \mathbf{n}ds
    = - \varepsilon \int_{\partial \Omega} \nabla_{x} \varrho_\varepsilon \cdot (\text{curl} \mathbf{u}_\varepsilon \times \mathbf{n})ds
    = \varepsilon \int_{\partial \Omega}
    (\nabla \varrho_\varepsilon \times \mathbf{n}) \cdot \text{curl} \mathbf{u}_\varepsilon ds
\end{equation*}
that is equal zero thanks to the boundary condition $\eqref{bc}_{2}$.
Consequently, we rewrite relation \eqref{ei-c2} as follows
\begin{equation} \label{ei-c3}
\hspace{-0.5 cm}  \begin{gathered}
    \frac{d}{dt}\int_\Omega \left( 
    \varrho_\varepsilon \frac{|\mathbf{v}_\varepsilon|^2}{2}
    + H(\varrho_\varepsilon)
    \right)dx
    + \int_\Omega \nabla p(\varrho_\varepsilon) \cdot \mathbf{w}_\varepsilon dx
    +r_1 \int_\Omega  \varrho_\varepsilon |\mathbf{v}_\varepsilon
    - \mathbf{w}_\varepsilon|(\mathbf{v}_\varepsilon - \mathbf{w}_\varepsilon)\mathbf{v}_\varepsilon dx
\\
    + \varepsilon \int_\Omega \varrho_\varepsilon \nabla \mathbf{v}_\varepsilon \left(
    \nabla_{x} \mathbf{v}_\varepsilon - \nabla_x^{\top} \mathbf{w}_\varepsilon
  \right) dx -\varepsilon \int_{\partial \Omega}  \varrho_\varepsilon
  \mathbb{D}(\mathbf{u}_\varepsilon) \mathbf{v}_\varepsilon \mathbf{n}  ds =0,
  \end{gathered}
\end{equation}
where we formally assumed $\nabla \mathbf{w}_\varepsilon = \nabla^{\top} \mathbf{w}_\varepsilon$.
Now, we multiply (\ref{mom2-ag}) by $\mathbf{w}_\varepsilon$. Similarly as before, we obtain
\begin{equation} \label{ei-c4}
    \frac{d}{dt}\int_\Omega
    \varrho_\varepsilon \frac{|\mathbf{w}_\varepsilon|^2}{2} dx
    + \varepsilon \int_\Omega \textrm{div}_{x} \left(\varrho_\varepsilon \nabla_x^{\top} \mathbf{u}_\varepsilon\right) 
    \cdot \mathbf{w}_\varepsilon dx =0.
\end{equation}
Integrating by parts the viscous term, we have
\begin{align*}
 \int_\Omega \textrm{div}_{x} &\left(\varrho_\varepsilon \nabla_x^{\top} \mathbf{u}_\varepsilon\right) 
  \cdot \mathbf{w}_\varepsilon dx 
\\
       &= - \int_\Omega \varrho_\varepsilon \nabla_x^{\top} \mathbf{u}_\varepsilon \nabla_{x} \mathbf{w}_\varepsilon dx
    + \int_{\partial \Omega}  \varrho_\varepsilon \nabla_x^{\top} \mathbf{u}_\varepsilon \mathbf{w}_\varepsilon
    \mathbf{n} ds
\\
   & = - \int_\Omega \varrho_\varepsilon \nabla_x^{\top} (\mathbf{v}_\varepsilon
    - \mathbf{w}_\varepsilon)  \nabla_{x} \mathbf{w}_\varepsilon dx  + \int_{\partial \Omega}  \varrho_\varepsilon  
    (\mathbb{D}(\mathbf{u}_\varepsilon)-\mathbb{A}(\mathbf{u}_\varepsilon))
    \mathbf{w}_\varepsilon  \mathbf{n} ds.
\end{align*}
Now, again formally, we consider $\nabla \mathbf{w}_\varepsilon = \nabla^{\top} \mathbf{w}_\varepsilon$,
and we rewrite the relation above as
\begin{equation*}
  \begin{aligned}
    - \int_\Omega \varrho_\varepsilon & (\nabla_x^{\top} \mathbf{v}_\varepsilon
    - \nabla_x^{\top} \mathbf{w}_\varepsilon)  \nabla_x^{\top} \mathbf{w}_\varepsilon dx
    + \int_{\partial \Omega}  \varrho_\varepsilon  
    (\mathbb{D}(\mathbf{u}_\varepsilon)-\mathbb{A}(\mathbf{u}_\varepsilon))
    \mathbf{w}_\varepsilon   \mathbf{n} ds
\\
  &  =  - \int_\Omega \varrho_\varepsilon (\nabla_x^{\top} \mathbf{v}_\varepsilon
    - \nabla_x^{\top} \mathbf{w}_\varepsilon)     \nabla_x^{\top} \mathbf{w}_\varepsilon dx
    + \int_{\partial \Omega}     \varrho_\varepsilon    \mathbb{D}(\mathbf{u}_\varepsilon)
    \mathbf{w}_\varepsilon \mathbf{n}   - \int_{\partial \Omega}   \varrho_\varepsilon  
    \mathbb{A}(\mathbf{u}_\varepsilon)  \mathbf{w}_\varepsilon  \mathbf{n} ds,
  \end{aligned}
\end{equation*}
where the last term is equal zero for the same arguments as above.
Consequently, relation \eqref{ei-c4} could be rewritten as follows
\begin{equation} \label{ei-c5}
    \frac{d}{dt}\int_\Omega  \varrho_\varepsilon \frac{|\mathbf{w}_\varepsilon|^2}{2} dx
    - \varepsilon \int_\Omega \varrho_\varepsilon (\nabla_x^{\top} \mathbf{v}_\varepsilon
    - \nabla_{x} \mathbf{w}_\varepsilon) \nabla \mathbf{w}_\varepsilon dx
    + \int_{\partial \Omega}  \varrho_\varepsilon    \mathbb{D}(\mathbf{u}_\varepsilon)
    \mathbf{w}_\varepsilon \mathbf{n} ds =0.
\end{equation}
Summing up \eqref{ei-c3} and \eqref{ei-c5}, we obtain
\begin{equation} \label{ei-c6}
  \begin{aligned}
    \frac{d}{dt}\int_\Omega & \left( \varrho_\varepsilon \left(
    \frac{|\mathbf{v}_\varepsilon|^2}{2}  + \frac{|\mathbf{w}_\varepsilon|^2}{2}
    \right)  + H(\varrho_\varepsilon)  \right) dx
    + \int_\Omega \nabla p(\varrho_\varepsilon) \cdot \mathbf{w}_\varepsilon
    \\
  &\quad   +r_1 \int_\Omega \varrho_\varepsilon |\mathbf{v}_\varepsilon
    - \mathbf{w}_\varepsilon|(\mathbf{v}_\varepsilon - \mathbf{w}_\varepsilon)\mathbf{v}_\varepsilon dx
    + \varepsilon \int_\Omega \varrho_\varepsilon \nabla \mathbf{v}_\varepsilon \left(
    \nabla_{x} \mathbf{v}_\varepsilon   - \nabla_x^{\top} \mathbf{w}_\varepsilon
  \right) dx
  \\
   &\quad  - \varepsilon \int_\Omega \varrho_\varepsilon (\nabla_x^{\top} \mathbf{v}_\varepsilon
    - \nabla_x^{\top} \mathbf{w}_\varepsilon)
    \nabla_x^{\top} \mathbf{w}_\varepsilon=0.
  \end{aligned}
\end{equation}
Now,
\begin{align*}
    \varepsilon \int_\Omega & \varrho_\varepsilon \nabla \mathbf{v}_\varepsilon \left(
    \nabla_{x} \mathbf{v}_\varepsilon  - \nabla_x^{\top} \mathbf{w}_\varepsilon
    \right)  dx - \varepsilon \int_\Omega \varrho_\varepsilon (\nabla_x^{\top} \mathbf{v}_\varepsilon
    - \nabla_x^{\top} \mathbf{w}_\varepsilon)  \nabla_x^{\top} \mathbf{w}_\varepsilon dx
\\
  &  = \varepsilon  \int_\Omega  \varrho_\varepsilon  \left(  |\mathbb{D}(\mathbf{v}_\varepsilon)+
    \mathbb{A}(\mathbf{v}_\varepsilon)|^2 +|\nabla_x^{\top} \mathbf{w}_\varepsilon|^2
    - \nabla_x^{\top} \mathbf{w}_\varepsilon \left( \nabla_{x} \mathbf{v}_\varepsilon
    + \nabla_x^{\top} \mathbf{v}_\varepsilon  \right)  \right) dx
\\
  &  = \varepsilon  \int_\Omega \varrho_\varepsilon  \left(
    |\mathbb{D}(\mathbf{v}_\varepsilon)|^2+  |\mathbb{A}(\mathbf{v}_\varepsilon)|^2
    +|\nabla_x^{\top} \mathbf{w}_\varepsilon|^2
    - 2 \nabla_x^{\top} \mathbf{w}_\varepsilon \mathbb{D}(\mathbf{v}_\varepsilon) \right) dx
\\
  &  = \varepsilon  \int_\Omega  \varrho_\varepsilon  \left(  |\mathbb{A}(\mathbf{v}_\varepsilon)|^2
    +  |\mathbb{D}(\mathbf{v}_\varepsilon)-\nabla_x^{\top} \mathbf{w}_\varepsilon|^2 \right) dx
\\
  &  = \varepsilon   \int_\Omega  \varrho_\varepsilon  \left(    |\mathbb{A}(\mathbf{u}_\varepsilon)|^2
    +   |\mathbb{D}(\mathbf{u}_\varepsilon)|^2  \right) dx.
\end{align*}
Moreover,
\begin{equation*}
    \int_\Omega \nabla p(\varrho_\varepsilon) \cdot \mathbf{w}_\varepsilon 
    = \varepsilon \int_\Omega 
    p'(\varrho_\varepsilon) \nabla_{x} \varrho_\varepsilon
    \cdot \frac{\nabla_{x} \varrho_\varepsilon}{\varrho_\varepsilon}.
\end{equation*}
We consider now the drag term. We can write
\begin{equation*}
  r_1 \int_\Omega \varrho_\varepsilon
  |\mathbf{v}_\varepsilon - \mathbf{w}_\varepsilon|(\mathbf{v}_\varepsilon - \mathbf{w}_\varepsilon)\mathbf{v}_\varepsilon =
  r_1 \int_\Omega \varrho_\varepsilon |\mathbf{v}_\varepsilon - \mathbf{w}_\varepsilon|^3
  + r_1 \int_\Omega  \varrho_\varepsilon |\mathbf{v}_\varepsilon - \mathbf{w}_\varepsilon|(\mathbf{v}_\varepsilon - \mathbf{w}_\varepsilon)\mathbf{w}_\varepsilon. 
\end{equation*}
The second term reads as follows:
\begin{equation*}
  \varepsilon r_1 \int_\Omega \varrho_\varepsilon |\mathbf{u}_\varepsilon| \mathbf{u}_\varepsilon \nabla_x \log \varrho_\varepsilon=
  \varepsilon r_1 \int_\Omega |\mathbf{u}_\varepsilon| \mathbf{u}_\varepsilon \nabla_x \varrho_\varepsilon.
\end{equation*}
By parts integration gives
\begin{equation*}
  \varepsilon r_1 \int_\Omega |\mathbf{u}_\varepsilon| \mathbf{u}_\varepsilon \nabla_x \varrho_\varepsilon
  =- \varepsilon r_1 \int_\Omega |\mathbf{u}_\varepsilon|
  \text{div}_x \mathbf{u}_\varepsilon  \varrho_\varepsilon
  - \varepsilon r_1 \int_\Omega 
  \varrho_\varepsilon 
  \frac{u_k}{|\mathbf{u}_\varepsilon|}
  u_j \partial_j u_k
  + \varepsilon r_1 \int_{\partial_\Omega} \varrho_\varepsilon\mathbf{u}_\varepsilon
  |\mathbf{u}_\varepsilon|,
\end{equation*}
where the boundary term is zero thanks to the condition \eqref{bc}. 
We have
\begin{align*}
\left|\varepsilon r_1 \int_\Omega |\mathbf{u}_\varepsilon| \mathbf{u}_\varepsilon \nabla_x \varrho_\varepsilon
\right| dx &\leq \varepsilon r_1 \int_\Omega \varrho_\varepsilon
|\mathbf{u}_\varepsilon||\mathbb{D}(\mathbf{u}_\varepsilon)| dx
\\
& \leq \varepsilon r_1 \| \sqrt{\varrho_\varepsilon}\mathbf{u}_\varepsilon
\|_{L^2(\Omega)} \| \sqrt{\varrho_\varepsilon}\mathbb{D}(\mathbf{u}_\varepsilon)\|_{L^2(\Omega)} 
\\
& \leq  r_1 \left\| \frac{\sqrt{\varrho_\varepsilon}}{\varrho_\varepsilon^{1/3}}\varrho_\varepsilon^{1/3}\mathbf{u}_\varepsilon
\right\|_{L^2(\Omega)} \varepsilon \|\sqrt{\varrho_\varepsilon}\mathbb{D}(\mathbf{u}_\varepsilon)\|_{L^2(\Omega)}
\\
  &  \leq \frac{r_1}{2} \left\|
    \frac{\sqrt{\varrho_\varepsilon}}{\varrho_\varepsilon^{1/3}}\varrho_\varepsilon^{1/3}\mathbf{u}_\varepsilon
\right\|_{L^2(\Omega)}^2 + \frac{\varepsilon}{2} \|\sqrt{\varrho_\varepsilon}\mathbb{D}(\mathbf{u}_\varepsilon)
\|_{L^2(\Omega)}^2,
\end{align*}
where the second term can be absorbed. For the first term, we have
\begin{align*}
\frac{r_1}{2} 
\left\|\frac{\sqrt{\varrho_\varepsilon}}{\varrho_\varepsilon^{1/3}}\varrho_\varepsilon^{1/3}\mathbf{u}_\varepsilon
  \right\|_{L^2(\Omega)}^2 &= \frac{r_1}{2} \int_\Omega \frac{\varrho_\varepsilon}{\varrho_\varepsilon^{2/3}}
   \varrho_\varepsilon^{2/3}\mathbf{u}_\varepsilon^{2} dx.
\\
\intertext{Namely}
\frac{r_1}{2} \left\| \frac{\sqrt{\varrho_\varepsilon}}{\varrho_\varepsilon^{1/3}}\varrho_\varepsilon^{1/3}\mathbf{u}_\varepsilon
\right\|_{L^2(\Omega)}^2 &\leq \frac{r_1}{6}\int_\Omega \varrho_\varepsilon
+ \frac{r_1}{3} \int_\Omega \varrho_\varepsilon |\mathbf{u}_\varepsilon|^3 dx.
\end{align*}
The second term can be absorbed. The first term is bounded by a constant
$c=c(r_1) > 0$ because $\varrho_\varepsilon$ is bounded in $L^\infty(0,T;L^\gamma (\Omega))$ with $\gamma>1$.
Consequently, we conclude the derivation of \eqref{k-entropy}.
\end{remark}

\subsection{Main result}
We are now in position to state the following theorem
\begin{theorem} \label{main}
  Let $T > 0$ be given and let
  $(\varrho^E,\mathbf{u}^E)$ be the strong solution for the
  compressible Euler system (\ref{cont-E}),(\ref{mom-E}) corresponding
  to the initial data $(\varrho^E_0,\mathbf{u}^E_0)$ as in Theorem
  \ref{E-str}. For any $\varepsilon \in (0,1)$, let $(\varrho_{0,\varepsilon},
  \mathbf{v}_{0,\varepsilon},\mathbf{w}_{0,\varepsilon})$ be an initial data such that
\begin{equation*}
\mathbf{v}_{0,\varepsilon}
= \mathbf{u}_{0,\varepsilon} + \mathbf{w}_{0,\varepsilon}
\end{equation*}
and
\begin{equation*}
    \varrho_{0,\varepsilon} \in L^\gamma(\Omega), \,\,\,
    \varrho_{0,\varepsilon} \geq 0, \,\,\,
    \nabla_x \sqrt{\varrho_{0,\varepsilon}} \in L^2(\Omega)
\end{equation*}
\begin{equation*}
    \varrho_{0, \varepsilon} \mathbf{v}_{0, \varepsilon} \in L^1(\Omega), \,\,\, 
  \varrho_{0, \varepsilon} \mathbf{v}_{0, \varepsilon} = 0
  \,\,\,  \text{if} \,\,\,  \varrho_{0,\varepsilon}=0, \,\,\, 
  \frac{|\varrho_{0, \varepsilon}
    \mathbf{v}_{0, \varepsilon}|^2}{\varrho_{0,\varepsilon}} \in L^1(\Omega).
\end{equation*}
Assume that
\begin{equation} \label{id-conv}
 \hspace{-0.3 cm}   \left[  \|  \varrho_{0,\varepsilon} - \varrho_0^E
    \|_{L^\gamma(\Omega)}  + \int_\Omega 
    \varrho_{0,\varepsilon}
    |\mathbf{v}_{0,\varepsilon}-\mathbf{v}_0^E|^2  dx
    + \int_\Omega \varrho_{0,\varepsilon}
    |\mathbf{w}_{0,\varepsilon}-\mathbf{w}_0^E|^2  dx \right]  \to 0 
    \mbox{ as } \varepsilon \to 0
\end{equation}
and
\begin{equation} \label{cond-conv}
\| \varrho_\varepsilon \|_{L^\gamma([0,T];L^\gamma(\Gamma_\varepsilon))}
= o(\varepsilon^{\frac{1}{\gamma}}),\ \ \ \ 
\varepsilon^{\frac{\gamma-1}{\gamma}} \int_0^T \int_{\Gamma_\varepsilon}
\frac{\varrho_\varepsilon|\mathbf{u}_\varepsilon|^2}{d_\Omega^2(x)}dxdt
\to 0 \,\, \textrm{ as } \,\, \varepsilon \to 0.
\end{equation}
Then
\begin{equation} \label{conv-inv-lim}
    \sup_{t \in [0,T]} \left[  \| \varrho_{\varepsilon}  - \varrho^E
    \|_{L^\gamma(\Omega)} + \int_\Omega  \varrho_{\varepsilon}
    |\mathbf{u}_{\varepsilon}-\mathbf{u}^E|^2
    dx  \right] \to 0 \mbox{ as } \varepsilon \to 0.
\end{equation}
\end{theorem}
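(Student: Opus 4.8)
The plan is to run a relative-energy (weak--strong) argument on the augmented system \eqref{cont-ag}--\eqref{mom2-ag}, comparing its weak solution with the strong Euler solution $(\varrho^E,\mathbf{u}^E)$ of Theorem~\ref{E-str}. Recalling from Definition~\ref{def-aug} that $\Lambda+2\varepsilon\nabla\sqrt{\varrho_\varepsilon}=\sqrt{\varrho_\varepsilon}\,\mathbf{v}_\varepsilon$ and $2\varepsilon\nabla\sqrt{\varrho_\varepsilon}=\sqrt{\varrho_\varepsilon}\,\mathbf{w}_\varepsilon$, the entropy inequality \eqref{k-entropy} is exactly the total energy of the triple $(\varrho_\varepsilon,\mathbf{v}_\varepsilon,\mathbf{w}_\varepsilon)$. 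I would therefore introduce the relative energy
\[
\mathcal{E}(t)=\int_\Omega\Big[\tfrac12\varrho_\varepsilon|\mathbf{v}_\varepsilon-\mathbf{U}|^2+\tfrac12\varrho_\varepsilon|\mathbf{w}_\varepsilon|^2+H(\varrho_\varepsilon)-H'(\varrho^E)(\varrho_\varepsilon-\varrho^E)-H(\varrho^E)\Big]\,dx,
\]
where $\mathbf{U}$ is an admissible test velocity built from $\mathbf{u}^E$ (specified below). Since $\gamma>1$ and, by \eqref{E-vac}, $\varrho^E$ is bounded and bounded away from zero, the relative internal energy term is coercive and controls $\|\varrho_\varepsilon-\varrho^E\|_{L^\gamma(\Omega)}$; combined with $\mathbf{u}_\varepsilon=\mathbf{v}_\varepsilon-\mathbf{w}_\varepsilon$ and the bound on $\int_\Omega\varrho_\varepsilon|\mathbf{w}_\varepsilon|^2$, this shows that $\sup_{[0,T]}\mathcal{E}(t)\to0$ yields precisely \eqref{conv-inv-lim}.

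To obtain a differential inequality for $\mathcal{E}$, I would start from \eqref{k-entropy} and add the time evolution of the cross terms. The derivative of $-\int_\Omega\varrho_\varepsilon\mathbf{v}_\varepsilon\cdot\mathbf{U}+\tfrac12\int_\Omega\varrho_\varepsilon|\mathbf{U}|^2$ is computed by inserting $\mathbf{U}$ as a test function in the momentum weak formulation \eqref{mom-ag-v} and using the continuity equation \eqref{mass-ag}, while the derivative of the pressure-potential part is obtained by testing \eqref{mass-ag} with $H'(\varrho^E)$; everywhere one uses that $(\varrho^E,\mathbf{u}^E)$ solves \eqref{cont-E}--\eqref{mom-E} pointwise in view of the regularity \eqref{E-int}. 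This yields
\[
\mathcal{E}(t)+\varepsilon\!\int_0^t\!\!\int_\Omega(|\mathcal{S}|^2+|\mathcal{A}|^2)\,dx\,ds+r_1\!\int_0^t\!\!\int_\Omega\varrho_\varepsilon|\mathbf{u}_\varepsilon|^3\,dx\,ds\le\mathcal{E}(0)+\int_0^t\mathcal{R}(s)\,ds,
\]
where $\mathcal{R}$ collects a convective remainder of the type $\int_\Omega\varrho_\varepsilon(\mathbf{v}_\varepsilon-\mathbf{U})\otimes(\mathbf{u}_\varepsilon-\mathbf{U}):\nabla_x\mathbf{U}$ plus lower-order terms in $\mathbf{w}_\varepsilon$, a pressure remainder controlled by the relative internal energy, the $\varepsilon$-weighted viscous couplings with $\nabla_x\mathbf{U}$, and the $r_1$-weighted drag term. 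The convective and pressure remainders are bounded by $C\mathcal{E}(s)$ through the Lipschitz bounds on $(\varrho^E,\mathbf{u}^E)$; the interior viscous couplings are estimated by $\varepsilon\|\mathcal{S}\|_{L^2}\,\|\nabla_x\mathbf{U}\|_{L^\infty}\lesssim\varepsilon^{1/2}$ after Young's inequality, half being absorbed into the dissipation on the left; and the drag term is handled as in Remark~\ref{ei-deriv} and vanishes as $r_1\to0$.

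The essential difficulty is that the test functions in the Navier--Stokes weak formulation must satisfy the no-slip condition \eqref{bc}, whereas $\mathbf{u}^E$ only fulfils $\mathbf{u}^E\cdot\mathbf{n}|_{\partial\Omega}=0$. Following Kato, I would set $\mathbf{U}=\mathbf{u}^E-\boldsymbol{\xi}_\varepsilon$, where $\boldsymbol{\xi}_\varepsilon$ is a ``fake'' boundary-layer corrector supported in the strip $\Gamma_{c\varepsilon}=\{d_\Omega(x)\le c\varepsilon\}$, equal to the tangential trace of $\mathbf{u}^E$ on $\partial\Omega$ and decaying to zero over a width of order $\varepsilon$, so that $\mathbf{U}|_{\partial\Omega}=0$ and all the boundary integrals appearing in the derivation are legitimate. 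By construction $|\boldsymbol{\xi}_\varepsilon|\lesssim1$ and $|\nabla_x\boldsymbol{\xi}_\varepsilon|\lesssim\varepsilon^{-1}$, the large gradient being concentrated in the normal direction inside $\Gamma_{c\varepsilon}$.

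The main obstacle, and the reason behind hypotheses \eqref{cond-conv}, is to show that the extra terms generated by $\boldsymbol{\xi}_\varepsilon$ in $\mathcal{R}$ vanish in the limit. For the convective correction one uses $d_\Omega\le c\varepsilon$ on $\Gamma_{c\varepsilon}$ to write
\[
\Big|\int_{\Gamma_{c\varepsilon}}\varrho_\varepsilon\mathbf{u}_\varepsilon\otimes\mathbf{u}_\varepsilon:\nabla_x\boldsymbol{\xi}_\varepsilon\Big|\lesssim\frac1\varepsilon\int_{\Gamma_{c\varepsilon}}\varrho_\varepsilon|\mathbf{u}_\varepsilon|^2\lesssim\varepsilon^{\frac1\gamma}\Big(\varepsilon^{\frac{\gamma-1}\gamma}\int_{\Gamma_\varepsilon}\frac{\varrho_\varepsilon|\mathbf{u}_\varepsilon|^2}{d_\Omega^2}\Big),
\]
which is $o(1)$ by the second condition in \eqref{cond-conv}; the pressure correction is bounded by $\varepsilon^{-1}\|\varrho_\varepsilon\|_{L^\gamma(\Gamma_\varepsilon)}^\gamma=o(1)$ by the first condition; and the viscous correction is estimated by Cauchy--Schwarz through $\varrho_\varepsilon\mathbb{D}(\mathbf{v}_\varepsilon)=\sqrt{\varrho_\varepsilon}\,\mathcal{S}$ as $\varepsilon\|\mathcal{S}\|_{L^2}\cdot\varepsilon^{-1}\|\sqrt{\varrho_\varepsilon}\|_{L^2(\Gamma_{c\varepsilon})}\lesssim\varepsilon^{1/2}\cdot\varepsilon^{-1}(\int_{\Gamma_{c\varepsilon}}\varrho_\varepsilon)^{1/2}$, which is again $o(1)$ since $\int_{\Gamma_{c\varepsilon}}\varrho_\varepsilon\lesssim|\Gamma_{c\varepsilon}|^{\frac{\gamma-1}\gamma}\|\varrho_\varepsilon\|_{L^\gamma(\Gamma_\varepsilon)}=o(\varepsilon)$ and $\varepsilon\|\mathcal{S}\|_{L^2}^2\le C$ from \eqref{k-entropy}. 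Once every remainder is either absorbed into $C\mathcal{E}(s)$ or shown to be $o(1)$, Gronwall's lemma gives $\sup_{[0,T]}\mathcal{E}(t)\le(\mathcal{E}(0)+o(1))e^{CT}$; since \eqref{id-conv}, together with the smallness of $\boldsymbol{\xi}_\varepsilon$ at $t=0$, forces $\mathcal{E}(0)\to0$, we conclude \eqref{conv-inv-lim}.
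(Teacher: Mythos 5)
Your proposal follows essentially the same strategy as the paper's proof: a relative-energy (weak--strong) argument for the augmented system \eqref{cont-ag}--\eqref{mom2-ag}, with a Kato-type ``fake'' boundary-layer corrector making $\mathbf{u}^E$ admissible for the no-slip weak formulation, the hypotheses \eqref{cond-conv} used exactly as you indicate (Kato's $d_\Omega\le c\varepsilon$ trick for the convective term, H\"older with $|\Gamma_\varepsilon|\sim\varepsilon$ for the pressure and viscous terms), and Gronwall to conclude. The only differences are bookkeeping choices --- the paper measures $\mathbf{w}_\varepsilon$ against $\overline{\mathbf{w}}=\widetilde{\delta}(\varepsilon)\nabla_x\log\varrho^E$ rather than against $0$, and builds the $\varepsilon$-weighted dissipation into the functional \eqref{entr-funct} instead of absorbing cross terms by Young --- which do not change the substance of the argument.
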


\begin{remark} \label{th-rm}
  Differently from Bardos and Nguyen \cite{BaNg}, we require a rate,
  for the $L^\gamma$-norm of the density $\varrho_\varepsilon$, in
  terms of $\varepsilon$. Moreover, with respect to the requirements
  of Sueur \cite{Su}, we assume only the condition on the kinetic
  energy (\ref{cond-conv})$_2$. However, our assumption is stronger
  and implies the analogous introduced in \cite{Su}.  As remarked by
  Bardos and Nguyen \cite{BaNg} and Sueur \cite{Su}, the assumptions
  (\ref{cond-conv}) are implied (when the density is constant), thanks
  to the Hardy's inequality, by the condition
  \begin{equation*}
  \varepsilon \int_0^T \int_{\Gamma_\varepsilon} |\nabla_x
  \mathbf{u}|^2 dxdt \to 0 \ \text{as} \ \varepsilon \to 0
\end{equation*}
used by Kato \cite{Ka} in the incompressible case.
\end{remark}

\subsection{Preliminary lemma and a priori estimates}
The following Lemma holds  (see Bresch et al. \cite{BDL}; Lemma 2)
\begin{lemma} \label{l1}
  Let $(\varrho_{\varepsilon n}, \mathbf{u}_{\varepsilon n})$ be smooth
  solution of \eqref{cont}--\eqref{mom}. Then the following identity holds
  \begin{equation}
    \begin{aligned} \label{lemm-1} 
    \frac{1}{2} \frac{d}{dt} &\int \varrho_{\varepsilon n} |\nabla_x \log \varrho_{\varepsilon n}|^2 
    + \int \nabla_x \text{div}_x \mathbf{u}_{\varepsilon n} \cdot \nabla_x \varrho_{\varepsilon n}
\\
 & +\int \varrho_{\varepsilon n}
    \mathbb{D}(\mathbf{u}_{\varepsilon n}) : \nabla_x \log \varrho_{\varepsilon n} \otimes \nabla_x \log
    \varrho_{\varepsilon n} = 0.
  \end{aligned}
\end{equation}
\end{lemma}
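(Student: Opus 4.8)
The identity \eqref{lemm-1} is purely kinematic --- it uses only the continuity equation \eqref{cont}, the momentum equation playing no role --- so the plan is to derive it directly from mass conservation. I would drop the subscripts, writing $(\varrho,\mathbf{u})$ for $(\varrho_{\varepsilon n},\mathbf{u}_{\varepsilon n})$, and set $\mathbf{m}=\nabla_x\log\varrho$, so that $m_i=\partial_i\log\varrho$ and $\varrho\,m_i=\partial_i\varrho$. First I would divide \eqref{cont} by $\varrho$ to obtain the transport equation for the log-density,
\begin{equation*}
\partial_t\log\varrho+\mathbf{u}\cdot\nabla_x\log\varrho=-\,\text{div}_x\mathbf{u},
\end{equation*}
and then apply $\partial_i$, using that mixed partials commute, to get the evolution of each component of $\mathbf{m}$,
\begin{equation*}
\frac{D}{Dt}m_i=\partial_t m_i+u_k\partial_k m_i=-(\partial_i u_k)\,m_k-\partial_i\,\text{div}_x\mathbf{u},
\end{equation*}
where $\frac{D}{Dt}=\partial_t+\mathbf{u}\cdot\nabla_x$ denotes the material derivative.

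The second ingredient I would establish is the transport identity $\frac{d}{dt}\int_\Omega\varrho\,F\,dx=\int_\Omega\varrho\,\frac{DF}{Dt}\,dx$, valid for any smooth scalar $F$. This follows by differentiating under the integral, inserting $\partial_t\varrho=-\,\text{div}_x(\varrho\mathbf{u})$, and integrating by parts; the boundary contribution $\int_{\partial\Omega}(\varrho\mathbf{u}\cdot\mathbf{n})\,F\,ds$ drops out because $\varrho\mathbf{u}|_{\partial\Omega}=0$ by $\eqref{bc}_{1}$. Applying it with $F=|\mathbf{m}|^2=m_im_i$, so that $\frac{D}{Dt}|\mathbf{m}|^2=2m_i\frac{D}{Dt}m_i$, and substituting the evolution equation above, I would arrive at
\begin{equation*}
\frac{1}{2}\frac{d}{dt}\int_\Omega\varrho\,|\mathbf{m}|^2\,dx=-\int_\Omega\varrho\,m_i(\partial_i u_k)m_k\,dx-\int_\Omega\varrho\,m_i\,\partial_i\,\text{div}_x\mathbf{u}\,dx.
\end{equation*}

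It then remains to identify the two terms on the right. In the first, $\mathbf{m}\otimes\mathbf{m}$ is symmetric, so only the symmetric part of $\nabla_x\mathbf{u}$ survives the contraction, giving $m_i(\partial_i u_k)m_k=\mathbb{D}(\mathbf{u}):\mathbf{m}\otimes\mathbf{m}$ and hence the contribution $-\int_\Omega\varrho\,\mathbb{D}(\mathbf{u}):\nabla_x\log\varrho\otimes\nabla_x\log\varrho\,dx$. In the second, the identity $\varrho\,m_i=\partial_i\varrho$ turns it into $-\int_\Omega\nabla_x\varrho\cdot\nabla_x\,\text{div}_x\mathbf{u}\,dx$. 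Moving both terms to the left-hand side reproduces \eqref{lemm-1}. I expect the only genuinely delicate point to be the vanishing of the boundary term in the transport identity, which is precisely where the no-flux condition $\eqref{bc}_{1}$ is used; everything else is pointwise algebra requiring no further integration by parts.
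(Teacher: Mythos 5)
Your proof is correct. For comparison: the paper does not actually prove Lemma \ref{l1} --- its entire proof is the citation to Bresch, Desjardins and Lin \cite{BDL} (Lemma 2 there) --- so what you have written is a self-contained substitute for that reference rather than a variant of an argument given in the paper; your route is in fact the standard computation behind the Bresch--Desjardins entropy and essentially the one in the cited source. Every step checks out: dividing \eqref{cont} by $\varrho_{\varepsilon n}$ gives the transport equation for $\log\varrho_{\varepsilon n}$; differentiating in $x$ gives the material derivative of $\mathbf{m}=\nabla_x\log\varrho_{\varepsilon n}$; the weighted transport identity $\frac{d}{dt}\int_\Omega\varrho_{\varepsilon n}F\,dx=\int_\Omega\varrho_{\varepsilon n}\frac{DF}{Dt}\,dx$ applied with $F=|\mathbf{m}|^2$ is legitimate, with the boundary term killed by $\eqref{bc}_{1}$; the contraction $m_i(\partial_iu_k)m_k=\mathbb{D}(\mathbf{u}_{\varepsilon n}):\mathbf{m}\otimes\mathbf{m}$ is valid precisely because $\mathbf{m}\otimes\mathbf{m}$ is symmetric; and $\varrho_{\varepsilon n}m_i=\partial_i\varrho_{\varepsilon n}$ produces the middle term of \eqref{lemm-1}. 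You are also right that \eqref{mom} plays no role: the identity is purely kinematic. The one hypothesis worth stating explicitly is $\varrho_{\varepsilon n}>0$, which is needed for $\log\varrho_{\varepsilon n}$ to make sense and for $|\mathbf{m}|^2$ to remain bounded up to the boundary (so that the no-flux condition really does annihilate the boundary integral); this positivity is implicit in the formulation of the lemma itself, since $\nabla_x\log\varrho_{\varepsilon n}$ appears in the statement.
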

\begin{proof}
See Bresch et al. \cite{BDL}.
\end{proof}

Now, from the continuity equation (\ref{cont}) and the energy inequality (\ref{ee}) we can deduce the following a priori estimates
\begin{equation*}
\left\Vert \varrho_{\varepsilon n}\right\Vert _{ L^{\infty}\left(0,T;L^{1}\left(\Omega\right) \cap L^{\gamma}\left(\Omega\right)\right)}
\leq C, \,\,\,  \left\Vert \sqrt{\varrho_{\varepsilon n}}\mathbf{u}_{\varepsilon n}\right\Vert _{L^{\infty}\left(0,T;L^{2}\left(\Omega\right)\right)}\leq C,
\end{equation*}
\begin{equation} \label{est_1}
  \left\Vert \sqrt{\varrho_{\varepsilon n }}\mathbb{D}\left(\mathbf{u}_{\varepsilon n}\right)\right\Vert _{L^{2}\left(0,T;L^{2}\left(\Omega\right)\right)}
  \leq C, \,\,\, \left\Vert \varrho_{\varepsilon n}^{1/3}\mathbf{u}_{\varepsilon n}\right\Vert _{L^{3}\left((0,T)\times \Omega, \right)}\leq C, 
\end{equation}
Moreover, Lemma \ref{l1} yields
\begin{equation} \label{est_2}
\left\Vert \nabla\sqrt{\varrho_{\varepsilon n}}\right\Vert_{L^{\infty}\left(0,T;L^{2}\left(\Omega\right)\right)}\leq C. 
\end{equation}
Finally, by (\ref{est_2}) and the first estimate in (\ref{est_1}), we can conclude that
\begin{equation} \label{est_4}
\left\Vert \sqrt{\varrho_{\varepsilon n}}\right\Vert _{L^{\infty}\left(0,T;W^{1,2}\left(\Omega\right)\right)}\leq C.
\end{equation}

\section{Convergence}
\subsection{Relative energy inequality}
Inspired by Bresch et al. \cite{BGL} (see also \cite{BNV-1}, \cite{BNV-2}),
we introduce an energy functional of the following type
\begin{equation} \label{entr-funct}
  \begin{aligned}
  \mathcal{E}(\varrho_\varepsilon, \mathbf{v}_\varepsilon,
  &\mathbf{w}_\varepsilon|\varrho^E, \overline{\mathbf{v}},\overline{\mathbf{w}}) (T,\cdot)
  \\
  =&   \int_{\Omega}  \left(\frac{1}{2}  \varrho_\varepsilon(|\mathbf{v}_\varepsilon
  -  \overline{\mathbf{v}}|^2 + |\mathbf{w}_\varepsilon -
  \overline{\mathbf{w}}|^2) \right)  (T,\cdot)  dx
   \\
 &  \quad
  + \int_{\Omega} (H(\varrho_\varepsilon) - H(\varrho^E)
  - H'(\varrho^E)(\varrho_\varepsilon - \varrho^E)) (T,\cdot)  dx
\\
&\quad +\varepsilon \int_0^T \int_\Omega \varrho_\varepsilon
\left( \left| \frac{\mathcal{S}(\mathbf{u}_\varepsilon)}{\sqrt{\varrho_\varepsilon}}
-\mathbb{D}(\overline{\mathbf{u}})
\right|^2 + \left| \frac{\mathcal{A}(\mathbf{u}_\varepsilon)}{\sqrt{\varrho_\varepsilon}}
-\mathbb{A}(\overline{\mathbf{u}})
\right|^2\right) dxdt
\end{aligned}
\end{equation}
where  $(\varrho_\varepsilon,\mathbf{v}_\varepsilon,\mathbf{w}_\varepsilon)$
is a weak solution
to the system (\ref{cont-ag}) - (\ref{mom2-ag}) and
$(\varrho^E,\overline{\mathbf{v}},\overline{\mathbf{w}})$ are such that
\begin{equation} \label{v_bar} \overline{\mathbf{v}} =
  \overline{\mathbf{u}} + 
  \widetilde{\delta}(\varepsilon) \nabla_{x}\log \varrho^E
\end{equation}
with $\overline{\mathbf{u}}$ smooth velocity field satisfying $\overline{\mathbf{u}}|_{\partial \Omega}=0$ and
$\overline{\mathbf{w}} = \widetilde{\delta}(\varepsilon)\nabla_{x} \log \varrho^E$
such that $\widetilde{\delta}(\varepsilon) \to 0$ as $\varepsilon \to 0$.

\begin{remark} \label{def-w}
  Relation \eqref{v_bar} can be written in a more general way, defining
  $\overline{\mathbf{w}}=\widetilde{\delta}(\varepsilon)\nabla_{x}
  \log r$, with $r$ arbitrary smooth function. However, for
  technical reasons due to the derivation of the relative energy
  inequality, and also because $r$ will play the role of the density of the
  compressible Euler system, we define already here
  $\overline{\mathbf{w}}=\widetilde{\delta}(\varepsilon)\nabla_{x}
  \log \varrho^E$. The expression for $\overline{\mathbf{u}}$ will be
  introduced later in Section~\ref{sec:inviscid-lim}. % when we deal with the vanishing
%  viscosity limit.
\end{remark}

In the following we will derive a relative energy inequality
satisfied by the weak solution of the ``augmented system" 
\eqref{cont-ag}--\eqref{mom2-ag}.

Now, thanks to the energy inequality \eqref{k-entropy}, we can write
\begin{equation} \label{step-1}
  \begin{aligned}
    \mathcal{E}(T,\cdot) - E(0,\cdot) \leq & \int_\Omega \left(
      \frac{1}{2} \varrho_\varepsilon |\overline{\mathbf{v}}|^2 -
      \varrho_\varepsilon \mathbf{v}_\varepsilon \cdot
      \overline{\mathbf{v}} + \frac{1}{2}\varrho_\varepsilon
      |\overline{\mathbf{w}}|^2 - \varrho_\varepsilon
      \mathbf{w}_\varepsilon \cdot\overline{\mathbf{w}} \right)
    (T,\cdot)dx
    \\
    &\quad - \int_\Omega \left( \frac{1}{2} \varrho_\varepsilon
      |\overline{\mathbf{v}}|^2 - \varrho_\varepsilon
      \mathbf{v}_\varepsilon \cdot \overline{\mathbf{v}} +
      \frac{1}{2}\varrho_\varepsilon |\overline{\mathbf{w}}|^2 -
      \varrho_\varepsilon \mathbf{w}_\varepsilon
      \cdot\overline{\mathbf{w}} \right) (0,\cdot)dx
    \\
    &\quad - \int_\Omega \left( H(\varrho^E) +
      H'(\varrho^E)(\varrho_\varepsilon-\varrho^E) \right) (T,\cdot)dx
    \\
    &\quad + \int_\Omega \left( H(\varrho^E) +
      H'(\varrho^E)(\varrho_\varepsilon-\varrho^E) \right) (0,\cdot)dx
    \\
    &\quad -2\varepsilon \int_0^T \int_\Omega \left(
      \sqrt{\varrho_\varepsilon} \mathcal{S}(\mathbf{u}_\varepsilon)
      \mathbb{D}(\overline{\mathbf{u}}) + \sqrt{\varrho_\varepsilon}
      \mathcal{A}(\mathbf{u}_\varepsilon)
      \mathbb{A}(\overline{\mathbf{u}}) \right) dxdt
    \\
    &\quad +\varepsilon \int_0^T \int_\Omega \left(
      \varrho_\varepsilon \left( |\mathbb{D}(\overline{\mathbf{u}})|^2
        + |\mathbb{A}(\overline{\mathbf{u}})|^2 \right) -
      \frac{p'(\varrho_\varepsilon)}{\varrho_\varepsilon} |\nabla_x
      \varrho_\varepsilon|^2 \right) dxdt,
  \end{aligned}
\end{equation}
where
\begin{equation} \label{E0}
  \begin{aligned}
    E(0,\cdot)   = &    \int_{\Omega} \left( \frac{1}{2}
    \varrho_{0,\varepsilon}(|\mathbf{v}_{0,\varepsilon} -
    \overline{\mathbf{v}}|^2 + |\mathbf{w}_{0,\varepsilon} -
    \overline{\mathbf{w}}|^2)  \right)  (0,\cdot)  dx
\\
   &\quad  + \int_{\Omega} (H(\varrho_{0,\varepsilon})
    - H(\varrho^E) - H'(\varrho^E)(\varrho_{0,\varepsilon} - \varrho^E))
    (0,\cdot)  dx
  \end{aligned}
\end{equation}

Now, we recall the weak formulation to the system \eqref{cont-ag}--\eqref{mom2-ag}
\begin{equation} \label{weak-cont}
  \begin{aligned}
- \int_{\Omega} \varrho_\varepsilon(T,\cdot) \varphi(T,\cdot) dx
+ & \int_{\Omega} \varrho_\varepsilon(0,\cdot) \varphi(0,\cdot) dx
\\
&\quad + \int_0^T \int_{\Omega}
\varrho_\varepsilon \partial_t \varphi dxdt + \int_0^T \int_{\Omega}
\varrho_\varepsilon \mathbf{u}_\varepsilon \cdot \nabla_{x} \varphi dxdt
=0,
\end{aligned}
\end{equation}

\begin{equation} \label{weak-w} 
  \begin{aligned}
    -\int_{\Omega}  \varrho_\varepsilon &\mathbf{v}_\varepsilon(T,\cdot) \cdot \boldsymbol{\varphi}(T,\cdot) dx
    +\int_{\Omega}
    \varrho_\varepsilon \mathbf{v}_\varepsilon(0,\cdot) \cdot \boldsymbol{\varphi}(0,\cdot) dx
  +\int_0^T \int_{\Omega} \varrho_\varepsilon \mathbf{v}_\varepsilon
  \cdot \partial_t \boldsymbol{\varphi} dxdt
  \\
  &\quad +\int_0^T \int_{\Omega} (\varrho_\varepsilon \mathbf{v}_\varepsilon \otimes \mathbf{u}_\varepsilon) : \nabla_{x} \boldsymbol{\varphi} dxdt 
  -\varepsilon\int_0^T \int_{\Omega} \varrho_\varepsilon \mathbb{D}(\mathbf{v}_\varepsilon) : \nabla_{x} \boldsymbol{\varphi} dxdt
\\
  &\quad  -\varepsilon\int_0^T \int_{\Omega} \varrho_\varepsilon \mathbb{A}(\mathbf{v}_\varepsilon) : \nabla_{x} \boldsymbol{\varphi} dxdt
    + \varepsilon\int_0^T \int_{\Omega}
    \varrho_\varepsilon \nabla_x \mathbf{w}_\varepsilon: \nabla_{x} \boldsymbol{\varphi} dxdt
\\ 
    &\quad -r_1\int_0^T \int_{\Omega}
    \varrho_\varepsilon |\mathbf{v}_\varepsilon - \mathbf{w}_\varepsilon|(\mathbf{v}_\varepsilon - \mathbf{w}_\varepsilon)
    \cdot \boldsymbol{\varphi}   + \int_0^T \int_{\Omega}   p(\varrho_\varepsilon) \mbox{div}_{x}  \boldsymbol{\varphi} dxdt  = 0,
  \end{aligned}
\end{equation}

\begin{equation} \label{weak-v-bar}
  \begin{aligned}
    -\int_{\Omega}  \varrho_\varepsilon &\mathbf{w}_\varepsilon  (T,\cdot) \cdot \boldsymbol{\varphi}(T,\cdot) dx
    +\int_{\Omega}  \varrho_\varepsilon \mathbf{w}_\varepsilon(0,\cdot) \cdot \boldsymbol{\varphi}(0,\cdot) dx
    +\int_0^T \int_{\Omega} \varrho_\varepsilon \mathbf{w}_\varepsilon  \cdot \partial_t \boldsymbol{\varphi} dxdt
\\
   &\quad  +\int_0^T \int_{\Omega} (\varrho_\varepsilon
    \mathbf{w}_\varepsilon \otimes \mathbf{u}_\varepsilon) : \nabla_{x} \boldsymbol{\varphi} dxdt 
    -\varepsilon\int_0^T \int_{\Omega} \varrho_\varepsilon \nabla_x^{\top}
    \mathbf{u}_\varepsilon : \nabla_{x} \boldsymbol{\varphi} dxdt = 0
\end{aligned}
\end{equation}
where the viscous terms read as in \eqref{visc-terms-D}--\eqref{visc-terms-ut}.

First, we test the continuity equation \eqref{weak-cont} by
$\frac{1}{2}|\overline{\mathbf{v}}|^2$ and
$\frac{1}{2}|\overline{\mathbf{w}}|^2$, respectively. We have,
\begin{equation} \label{test-w2}
  \begin{aligned}
  - \int_\Omega \frac{1}{2}   (\varrho_\varepsilon \cdot |\overline{\mathbf{v}}|^2)(T,\cdot)dx + \int_\Omega
 & \frac{1}{2} (\varrho_\varepsilon  \cdot |\overline{\mathbf{v}}|^2)(0,\cdot)dx
  \\
 &\quad + \int_0^T \int_\Omega \frac{1}{2} \left( \varrho_\varepsilon \partial_t
    |\overline{\mathbf{v}}|^2 + \varrho_\varepsilon
    \mathbf{u}_\varepsilon \cdot \nabla |\overline{\mathbf{v}}|^2
  \right)dxdt =0,
\end{aligned}
\end{equation}
\begin{equation} \label{test-v2}
  \begin{aligned}
  - \int_\Omega \frac{1}{2}
  (\varrho\varepsilon \cdot |\overline{\mathbf{w}}|^2)(T,\cdot)dx + \int_\Omega&
  \frac{1}{2}(\varrho_\varepsilon \cdot |\overline{\mathbf{w}}|^2)(0,\cdot)dx
   \\
 &\quad
  + \int_0^T \int_\Omega \frac{1}{2} \left( \varrho_\varepsilon \partial_t
    |\overline{\mathbf{w}}|^2 + \varrho_\varepsilon
    \mathbf{u}_\varepsilon \cdot \nabla |\overline{\mathbf{w}}|^2
  \right) dxdt =0.
\end{aligned}
\end{equation}
Now, we test the equation \eqref{weak-w} by $\overline{\mathbf{v}}$. We have
\begin{equation} \label{test-w}
  \begin{aligned}
    -\int_{\Omega} &
    \varrho_\varepsilon \mathbf{v}_\varepsilon(T,\cdot) \cdot \overline{\mathbf{v}}(T,\cdot) dx
    +\int_{\Omega}
    \varrho_\varepsilon \mathbf{v}_\varepsilon(0,\cdot) \cdot \overline{\mathbf{v}}(0,\cdot) dx
  +\int_0^T \int_{\Omega} \varrho_\varepsilon \mathbf{v}_\varepsilon
  \cdot \partial_t \overline{\mathbf{v}} dxdt
  \\
 & +\int_0^T \int_{\Omega} (\varrho_\varepsilon \mathbf{v}_\varepsilon \otimes \mathbf{u}_\varepsilon) : \nabla_{x} \overline{\mathbf{v}} dxdt 
  -\varepsilon\int_0^T \int_{\Omega} \varrho_\varepsilon \mathbb{D}(\mathbf{v}_\varepsilon) : \nabla_{x} \overline{\mathbf{v}}dxdt
\\
 & -\varepsilon\int_0^T \int_{\Omega} \varrho_\varepsilon \mathbb{A}(\mathbf{v}_\varepsilon) : \nabla_{x} \overline{\mathbf{v}} dxdt
    + \varepsilon\int_0^T \int_{\Omega}
    \varrho_\varepsilon \nabla_x \mathbf{w}_\varepsilon: \nabla_{x} \overline{\mathbf{v}} dxdt
\\
  &  -r_1\int_0^T \int_{\Omega}
    \varrho_\varepsilon |\mathbf{v}_\varepsilon - \mathbf{w}_\varepsilon|(\mathbf{v}_\varepsilon - \mathbf{w}_\varepsilon)
    \cdot \overline{\mathbf{v}} dxdt  + \int_0^T \int_{\Omega}  p(\varrho_\varepsilon) \mbox{div}_{x}  \overline{\mathbf{v}} dxdt  =0,
  \end{aligned}
\end{equation}
Next, we test the equation \eqref{weak-v-bar} by
$\overline{\mathbf{w}}$. We have
\begin{equation} \label{test-v-bar}
  \begin{aligned}
    -\int_{\Omega} &    \varrho_\varepsilon \mathbf{w}_\varepsilon(T,\cdot) \cdot \overline{\mathbf{w}}(T,\cdot) dx
    +\int_{\Omega}   \varrho_\varepsilon \mathbf{w}_\varepsilon(0,\cdot) \cdot \overline{\mathbf{w}}(0,\cdot) dx
    +\int_0^T \int_{\Omega} \varrho_\varepsilon \mathbf{w}_\varepsilon  \cdot \partial_t \overline{\mathbf{w}} dxdt
    \\
 & +\int_0^T \int_{\Omega} (\varrho_\varepsilon \mathbf{w}_\varepsilon \otimes \mathbf{u}_\varepsilon) : \nabla_{x} \overline{\mathbf{w}} dxdt 
 -\varepsilon\int_0^T \int_{\Omega} \varrho_\varepsilon \nabla_x^{\top} \mathbf{u}_\varepsilon : \nabla_{x} \overline{\mathbf{w}} dxdt =0.
\end{aligned}
\end{equation}
Finally, using the continuity equation \eqref{cont-ag}, we have
\begin{equation}  \label{test-H}
 \hspace{-0.3 cm} \begin{aligned}
  & \int_0^T\!\! \int_\Omega \left[ \partial_t \left( H(\varrho^E)
 + H'(\varrho^E)(\varrho_\varepsilon - \varrho^E) \right) \right] dxdt
\\
& \,\,= \int_0^T \!\!\int_\Omega \left[ H'(\varrho^E)\partial_t \varrho^E + \partial_t (H'(\varrho^E))(\varrho_\varepsilon - \varrho^E)
  + H'(\varrho^E)\partial_t \varrho_\varepsilon - H'(\varrho^E) \partial_t \varrho^E
  \right] dxdt
\\
 &\,\, = \int_0^T\!\! \int_\Omega \left[ \partial_t (H'(\varrho^E))(\varrho_\varepsilon - \varrho^E)
  - H'(\varrho^E)\textrm{div}_x(\varrho_\varepsilon \mathbf{u}_\varepsilon)  \right] dxdt
\\
& \,\, = \int_0^T\!\! \int_\Omega  \left[  \partial_t  (H'(\varrho^E))(\varrho_\varepsilon - \varrho^E) + \varrho_\varepsilon
    \mathbf{u}_\varepsilon \nabla_x (H'(\varrho^E))  \right]  dxdt.
\end{aligned}
\end{equation}
Plugging \eqref{test-w2}--\eqref{test-H} in \eqref{step-1}, we obtain
\begin{equation} \label{step-2}
\begin{aligned}
  \mathcal{E}(T,\cdot) - E(0,\cdot)
  & \leq  \int_0^T \int_\Omega \frac{1}{2} \left( \varrho_\varepsilon \partial_t
    |\overline{\mathbf{v}}|^2
    + \varrho_\varepsilon \mathbf{u}_\varepsilon \cdot \nabla |\overline{\mathbf{v}}|^2 \right) dxdt
  \\
 &\quad +\int_0^T \int_\Omega \frac{1}{2} \left( \varrho_\varepsilon \partial_t
    |\overline{\mathbf{w}}|^2
    + \varrho_\varepsilon \mathbf{u}_\varepsilon \cdot \nabla |\overline{\mathbf{w}}|^2 \right)
    dxdt
\\
   &\quad - \int_0^T \int_\Omega  \left(  \varrho_\varepsilon \mathbf{v}_\varepsilon \cdot \partial_t \overline{\mathbf{v}}
    + \varrho_\varepsilon \mathbf{v}_\varepsilon \otimes \mathbf{u}_\varepsilon : \nabla_x \overline{\mathbf{v}}
    + p(\varrho_\varepsilon) \textrm{div}_x \overline{\mathbf{v}}  \right)  dxdt
\\
   &\quad  +\varepsilon\int_0^T \int_{\Omega} \varrho_\varepsilon \mathbb{D}(\mathbf{v}_\varepsilon) : \nabla_{x} \overline{\mathbf{v}}dxdt
   +\varepsilon\int_0^T \int_{\Omega} \varrho_\varepsilon \mathbb{A}(\mathbf{v}_\varepsilon) : \nabla_{x} \overline{\mathbf{v}} dxdt
   \\
   &\quad  - \varepsilon\int_0^T \int_{\Omega}  \varrho_\varepsilon \nabla_x \mathbf{w}_\varepsilon: \nabla_{x} \overline{\mathbf{v}} dxdt
   +\varepsilon\int_0^T \int_{\Omega} \varrho_\varepsilon \nabla_x^{\top} \mathbf{u}_\varepsilon : \nabla_{x} \overline{\mathbf{w}} dxdt
   \\
    &\quad +r_1\int_0^T \int_{\Omega}  \varrho_\varepsilon |\mathbf{v}_\varepsilon - \mathbf{w}_\varepsilon|(\mathbf{v}_\varepsilon - \mathbf{w}_\varepsilon)
    \cdot \overline{\mathbf{v}}  dxdt
\\
   &\quad - \int_0^T \int_\Omega \left(  \varrho_\varepsilon \mathbf{w}_\varepsilon \cdot\partial_t\overline{\mathbf{w}} 
    +  \varrho_\varepsilon \mathbf{w}_\varepsilon \otimes \mathbf{u}_\varepsilon : \nabla_x \overline{\mathbf{w}} 
  \right)   dxdt  
\\
 &\quad  - \int_0^T \int_\Omega  \left[  \partial_t (H'(\varrho^E))(\varrho_\varepsilon - \varrho^E)  + \varrho_\varepsilon \mathbf{u}_\varepsilon \nabla_x (H'(\varrho^E))
  \right]   dxdt
\\
  &\quad   -2\varepsilon   \int_0^T \int_\Omega  \left(  \sqrt{\varrho_\varepsilon}  \mathcal{S}(\mathbf{u}_\varepsilon)
    \mathbb{D}(\overline{\mathbf{u}})   +   \sqrt{\varrho_\varepsilon}  \mathcal{A}(\mathbf{u}_\varepsilon)   \mathbb{A}(\overline{\mathbf{u}})     \right)   dxdt
  \\
   &\quad+\varepsilon \int_0^T \int_\Omega \left(  \varrho_\varepsilon \left( |\mathbb{D}(\overline{\mathbf{u}})|^2 + |\mathbb{A}(\overline{\mathbf{u}})|^2 \right) -
    \frac{p'(\varrho_\varepsilon)}{\varrho_\varepsilon} |\nabla_x   \varrho_\varepsilon|^2 \right) dxdt.
\end{aligned}
\end{equation}
Rearranging \eqref{step-2}, we obtain
\begingroup
\allowdisplaybreaks[1] 
\begin{equation} \label{step-3}
  \begin{aligned}
  \mathcal{E}(T,\cdot) - E(0,\cdot)
  &\leq \int_0^T \int_\Omega \varrho_\varepsilon \left( \partial_t \overline{\mathbf{v}} \cdot \left( \overline{\mathbf{v}} - \mathbf{v}_\varepsilon \right)
    +   \left( \nabla_x \overline{\mathbf{v}} \cdot \mathbf{u}_\varepsilon \right) \cdot \left( \overline{\mathbf{v}} - \mathbf{v}_\varepsilon \right) \right) dxdt
\\
  &\quad + \int_0^T \int_\Omega \varrho_\varepsilon \left( \partial_t \overline{\mathbf{w}} \cdot \left( \overline{\mathbf{w}} - \mathbf{w}_\varepsilon \right)
    +     \left( \nabla_x \overline{\mathbf{w}} \cdot \mathbf{u}_\varepsilon \right) \cdot \left( \overline{\mathbf{w}} - \mathbf{w}_\varepsilon \right) \right) dxdt
\\
   &\quad +\varepsilon\int_0^T \int_{\Omega} \varrho_\varepsilon \mathbb{D}(\mathbf{v}_\varepsilon) : \nabla_{x} \overline{\mathbf{v}}dxdt
   +\varepsilon\int_0^T \int_{\Omega} \varrho_\varepsilon \mathbb{A}(\mathbf{v}_\varepsilon) : \nabla_{x} \overline{\mathbf{v}} dxdt
   \\
   &\quad - \varepsilon\int_0^T \int_{\Omega}   \varrho_\varepsilon \nabla_x \mathbf{w}_\varepsilon: \nabla_{x} \overline{\mathbf{v}} dxdt
   +\varepsilon\int_0^T \int_{\Omega} \varrho_\varepsilon \nabla_x^{\top} \mathbf{u}_\varepsilon : \nabla_{x} \overline{\mathbf{w}} dxdt 
   \\
&\quad  +r_1\int_0^T \int_{\Omega}  \varrho_\varepsilon |\mathbf{v}_\varepsilon - \mathbf{w}_\varepsilon|(\mathbf{v}_\varepsilon - \mathbf{w}_\varepsilon)
    \cdot \overline{\mathbf{v}}  dxdt
    \end{aligned}
  \end{equation}
\begin{equation*}
  \hspace{3 cm}\begin{aligned}
    &\quad - \int_0^T \int_\Omega \left[\partial_t (H'(\varrho^E))(\varrho_\varepsilon - \varrho^E)
      + \varrho_\varepsilon \mathbf{u}_\varepsilon \nabla_x (H'(\varrho^E)) + p(\varrho_\varepsilon) \textrm{div}_x \overline{\mathbf{v}}
    \right]   dxdt
\\
&\quad
-2\varepsilon \int_0^T \int_\Omega \left( \sqrt{\varrho_\varepsilon} \mathcal{S}(\mathbf{u}_\varepsilon)
  \mathbb{D}(\overline{\mathbf{u}})  +  \sqrt{\varrho_\varepsilon}  \mathcal{A}(\mathbf{u}_\varepsilon)  \mathbb{A}(\overline{\mathbf{u}})  \right)  dxdt
\\
&\quad +\varepsilon \int_0^T \int_\Omega \left( \varrho_\varepsilon \left( |\mathbb{D}(\overline{\mathbf{u}})|^2
    + |\mathbb{A}(\overline{\mathbf{u}})|^2 \right) - \frac{p'(\varrho_\varepsilon)}{\varrho_\varepsilon} |\nabla_x \varrho_\varepsilon|^2 \right) dxdt
  \end{aligned}
\end{equation*}
\endgroup
We multiply \eqref{cont-E} by $H'(\varrho^E)$
\begin{equation*}
  \int_0^T \int_\Omega
  \left[
  H'(\varrho^E)\partial_t \varrho^E + H'(\varrho^E) \textrm{div}_x (\varrho^E \mathbf{u}^E)
  \right]
  dxdt 
  = 0.
\end{equation*}
Performing integration by parts, we obtain
\begin{equation*}
  \int_0^T \int_\Omega
  \left[
  H'(\varrho^E)\partial_t \varrho^E - \varrho^E \nabla_x (H'(\varrho^E)) \cdot \mathbf{u}^E 
  \right]
  dxdt
  = 0.
\end{equation*}
From $p(\varrho^E) = H'(\varrho^E)\varrho^E - H(\varrho^E)$, we have $\varrho^E \nabla_x (H'(\varrho^E)) = \nabla_x
p(\varrho^E)$. 
Consequently, from the above relation, we have
\begin{equation*}
  \int_0^T \int_\Omega
  \left[
  H'(\varrho^E)\partial_t \varrho^E - \nabla_x p(\varrho^E) \cdot \mathbf{u}^E
  \right]
  dxdt
  = 0.
\end{equation*}
Performing again integration by parts, we obtain
\begin{equation} \label{obs} 
\int_0^T \int_\Omega 
H'(\varrho^E)\partial_t \varrho^E dxdt= - \int_0^T \int_\Omega
  p(\varrho^E) \textrm{div}_x \mathbf{u}^E dxdt.
\end{equation}
From \eqref{step-3}, we have
\begin{equation} \label{step-4}
  \begin{aligned}
  \mathcal{E}(T,\cdot) - E(0,\cdot)
  &\leq
  \int_0^T \int_\Omega \varrho_\varepsilon \left( \partial_t \overline{\mathbf{v}} \cdot \left( \overline{\mathbf{v}} - \mathbf{v}_\varepsilon \right)
    +
    \left( \nabla_x \overline{\mathbf{v}} \cdot \mathbf{u}_\varepsilon \right) \cdot \left( \overline{\mathbf{v}} - \mathbf{v}_\varepsilon \right) \right)
    dxdt
\\
 &\quad  + \int_0^T \int_\Omega \varrho_\varepsilon \left( \partial_t \overline{\mathbf{w}} \cdot \left( \overline{\mathbf{w}} - \mathbf{w}_\varepsilon \right)
    +    \left( \nabla_x \overline{\mathbf{w}} \cdot \mathbf{u}_\varepsilon \right) \cdot \left( \overline{\mathbf{w}} - \mathbf{w}_\varepsilon \right) \right)  dxdt
\\
  &\quad  +\varepsilon\int_0^T \int_{\Omega} \varrho_\varepsilon \mathbb{D}(\mathbf{v}_\varepsilon) : \nabla_{x} \overline{\mathbf{v}}dxdt
    +\varepsilon\int_0^T \int_{\Omega} \varrho_\varepsilon  \mathbb{A}(\mathbf{v}_\varepsilon) : \nabla_{x}
    \overline{\mathbf{v}} dxdt
\\
&\quad    +r_1\int_0^T \int_{\Omega}  \varrho_\varepsilon |\mathbf{v}_\varepsilon - \mathbf{w}_\varepsilon|(\mathbf{v}_\varepsilon - \mathbf{w}_\varepsilon)
    \cdot \overline{\mathbf{v}}  dxdt
    \\
    &\quad  - \varepsilon\int_0^T \int_{\Omega}  \varrho_\varepsilon \nabla_x \mathbf{w}_\varepsilon: \nabla_{x} \overline{\mathbf{v}} dxdt
    +\varepsilon\int_0^T \int_{\Omega} \varrho_\varepsilon \nabla_x^{\top} \mathbf{u}_\varepsilon : \nabla_{x} \overline{\mathbf{w}} dxdt
  \end{aligned}
\end{equation}
\begin{equation*}
 \hspace{3 cm} \begin{aligned}
    &\quad - \int_0^T \int_\Omega \left[ \partial_t (H'(\varrho^E))(\varrho_\varepsilon - \varrho^E) + \varrho_\varepsilon \mathbf{u}_\varepsilon
      \nabla_x (H'(\varrho^E)) + p(\varrho_\varepsilon) \textrm{div}_x \overline{\mathbf{v}} \right] dxdt
    \\
    &\quad    -\int_0^T \int_\Omega \left[-p(\varrho^E)\textrm{div}_x \mathbf{u}^E - H'(\varrho^E)\partial_t \varrho^E  \right]  dxdt
    \\
    &\quad   -2\varepsilon  \int_0^T \int_\Omega \left( \sqrt{\varrho_\varepsilon} \mathcal{S}(\mathbf{u}_\varepsilon) \mathbb{D}(\overline{\mathbf{u}})
      + \sqrt{\varrho_\varepsilon} \mathcal{A}(\mathbf{u}_\varepsilon) \mathbb{A}(\overline{\mathbf{u}})  \right)  dxdt
    \\
    &\quad    +\varepsilon \int_0^T \int_\Omega \left( \varrho_\varepsilon \left( |\mathbb{D}(\overline{\mathbf{u}})|^2 
        + |\mathbb{A}(\overline{\mathbf{u}})|^2 \right) -  \frac{p'(\varrho_\varepsilon)}{\varrho_\varepsilon} |\nabla_x \varrho_\varepsilon|^2 \right) dxdt
  \end{aligned}
\end{equation*}
Now, from \eqref{mom-E} and using the continuity equation \eqref{cont-E}, we deduce
\begin{equation} \label{mom-E-1}
    \varrho^E(\partial_t \mathbf{u}^E + \mathbf{u}^E \cdot \nabla_x \mathbf{u}^E) + \nabla_x p(\varrho^E)=0.
\end{equation}
We multiply \eqref{mom-E-1} by $\frac{\varrho_\varepsilon}{\varrho^E}(\overline{\mathbf{u}}-\mathbf{u}_\varepsilon)$. We
obtain
\begin{equation} \label{mom-U-test}
  \varrho_\varepsilon \partial_t
  \mathbf{u}^E \cdot
  (\overline{\mathbf{u}}-\mathbf{u}_\varepsilon) + \varrho_\varepsilon
  \nabla_x \mathbf{u}^E \cdot \mathbf{u}^E \cdot
  (\overline{\mathbf{u}}-\mathbf{u}_\varepsilon) +
  \frac{\varrho_\varepsilon}{\varrho^E}\nabla_x p(\varrho^E) \cdot
  (\overline{\mathbf{u}}-\mathbf{u}_\varepsilon) =0.
\end{equation}
Now, we consider the first term on the right-hand-side of \eqref{step-4}. We have
\begin{equation} \label{v-terms}
 \hspace{-0.4 cm} \begin{aligned}
&  \int_0^T \int_\Omega \varrho_\varepsilon \left( \partial_t \overline{\mathbf{v}} \cdot \left( \overline{\mathbf{v}} - \mathbf{v}_\varepsilon \right)
    +
    \left( \nabla_x \overline{\mathbf{v}} \cdot \mathbf{u}_\varepsilon \right) \cdot \left( \overline{\mathbf{v}} - \mathbf{v}_\varepsilon \right) \right)   dxdt
\\
 & \,\,\, = \int_0^T \int_\Omega \varrho_\varepsilon \left( \partial_t \overline{\mathbf{u}} + \varepsilon \partial_t \nabla_x \log \varrho^E)
    \cdot \left( \overline{\mathbf{u}} + \varepsilon \nabla_x \log \varrho^E -
      \mathbf{u}_\varepsilon  - \varepsilon \nabla_x \log \varrho_\varepsilon \right)\right)  dxdt
\\
&\quad  +\int_0^T \int_\Omega \left( \nabla_x \overline{\mathbf{u}} + \varepsilon \nabla_x \nabla_x \log \varrho^E
  \right) \cdot \mathbf{u}_\varepsilon \cdot \left( \overline{\mathbf{u}} + \varepsilon \nabla_x \log \varrho^E
    - \mathbf{u}_\varepsilon  - \varepsilon \nabla_x \log \varrho_\varepsilon \right)  dxdt
\\
&\,\,\,  =   \int_0^T \int_\Omega \varrho_\varepsilon  \partial_t \overline{\mathbf{u}} \cdot (\overline{\mathbf{u}} - \mathbf{u}_\varepsilon) dxdt
  +  \varepsilon \int_0^T \int_\Omega \varrho_\varepsilon \partial_t \overline{\mathbf{u}}  \cdot 
  (\nabla_x \log \varrho^E - \nabla_x \log \varrho_\varepsilon)  dxdt
  \\
  &\quad   + \varepsilon \int_0^T \int_\Omega \varrho_\varepsilon
  \partial_t \nabla_x \log \varrho^E \cdot (\overline{\mathbf{u}} - \mathbf{u}_\varepsilon)
  dxdt + \int_0^T \int_\Omega   \varrho_\varepsilon \nabla_x \overline{\mathbf{u}}
  \cdot  \mathbf{u}_\varepsilon  \cdot  (\overline{\mathbf{u}} - \mathbf{u}_\varepsilon)   dxdt
   \\
  &\quad  +  \varepsilon^2 \int_0^T \int_\Omega \varrho_\varepsilon \partial_t \nabla_x \log \varrho^E  \cdot
  (\nabla_x \log \varrho^E - \nabla_x \log \varrho_\varepsilon)  dxdt
  \\
  &\quad +  \varepsilon \int_0^T \int_\Omega \varrho_\varepsilon \nabla_x \overline{\mathbf{u}}
  \cdot \mathbf{u}_\varepsilon  \cdot  (\nabla_x \log \varrho^E - \nabla_x \log \varrho_\varepsilon)  dxdt
  \\
  &  \quad + \varepsilon \int_0^T \int_\Omega \varrho_\varepsilon \nabla_x \nabla_x \log \varrho^E \cdot
  \mathbf{u}_\varepsilon \cdot (\overline{\mathbf{u}} -  \mathbf{u}_\varepsilon)   dxdt
  \\
  &\quad + \varepsilon^2   \int_0^T \int_\Omega   \varrho_\varepsilon \nabla_x \nabla_x \log \varrho^E \cdot   \mathbf{u}_\varepsilon
  \cdot (\nabla_x \log \varrho^E - \nabla_x \log  \varrho_\varepsilon)  dxdt,
\end{aligned}
\end{equation}
while, the second term on the right-hand-side of \eqref{step-4} reads as
\begin{equation} \label{w-terms}
  \begin{aligned}
  \int_0^T \int_\Omega &\varrho_\varepsilon \left( \partial_t \overline{\mathbf{w}} \cdot \left( \overline{\mathbf{w}} - \mathbf{w}_\varepsilon \right)
    +  \left( \nabla_x \overline{\mathbf{w}} \cdot \mathbf{u}_\varepsilon \right) \cdot \left( \overline{\mathbf{w}} - \mathbf{w}_\varepsilon \right) \right) dxdt
\\
&= \varepsilon^2 \int_0^T \int_\Omega \Big(\varrho_\varepsilon \partial_t \nabla_x \log \varrho^E \cdot (\nabla_x \log \varrho^E - \nabla_x \log \varrho_\varepsilon)
\\
& \hspace{1 cm}+ \varrho_\varepsilon
  \nabla_x \nabla_x \log \varrho^E \cdot \mathbf{u}_\varepsilon \cdot  \big(\nabla_x \log \varrho^E - \nabla_x \log \varrho_\varepsilon\big) \Big) dxdt.
\end{aligned}
\end{equation}
Consequently, using \eqref{mom-U-test}, we have
\begin{equation} \label{vw-terms}
  \begin{aligned}
    \int_0^T \int_\Omega & \varrho_\varepsilon \left( \partial_t \overline{\mathbf{v}} \cdot \left( \overline{\mathbf{v}} - \mathbf{v}_\varepsilon \right)
      +     \left( \nabla_x \overline{\mathbf{v}} \cdot \mathbf{u}_\varepsilon \right)  \cdot \left( \overline{\mathbf{v}} - \mathbf{v}_\varepsilon \right) \right)  dxdt
    \\
    & \quad +\int_0^T \int_\Omega \varrho_\varepsilon \left( \partial_t \overline{\mathbf{w}} \cdot \left( \overline{\mathbf{w}} - \mathbf{w}_\varepsilon \right)
    +    \left( \nabla_x \overline{\mathbf{w}} \cdot \mathbf{u}_\varepsilon  \right) \cdot \left( \overline{\mathbf{w}} -  \mathbf{w}_\varepsilon \right) \right)dxdt
  \\
  = &  \int_0^T \int_\Omega \varrho_\varepsilon \partial_t \overline{\mathbf{u}} \cdot (\overline{\mathbf{u}} - \mathbf{u}_\varepsilon)
  +  \varepsilon \int \int \varrho_\varepsilon \partial_t \overline{\mathbf{u}}   \cdot  (\nabla_x \log \varrho^E - \nabla_x \log \varrho_\varepsilon)dxdt
  \\
   & \quad + \varepsilon \int_0^T \int_\Omega \varrho_\varepsilon  \partial_t \nabla_x \log \varrho^E \cdot (\overline{\mathbf{u}} - \mathbf{u}_\varepsilon)dxdt
\end{aligned}
\end{equation}
\begin{equation*}
\begin{aligned}
&\quad  +   2\varepsilon^2 \int_0^T \int_\Omega \varrho_\varepsilon \partial_t \nabla_x \log \varrho^E   \cdot  (\nabla_x \log \varrho^E - \nabla_x \log \varrho_\varepsilon)dxdt
 \\
& \quad  + \int_0^T \int_\Omega   \varrho_\varepsilon \nabla_x \overline{\mathbf{u}}
\cdot   \mathbf{u}_\varepsilon   \cdot   (\overline{\mathbf{u}} - \mathbf{u}_\varepsilon)   dxdt
 \\
& \quad  +  \varepsilon \int_0^T \int_\Omega \varrho_\varepsilon \nabla_x \overline{\mathbf{u}}    \cdot \mathbf{u}_\varepsilon  \cdot
  (\nabla_x \log \varrho^E - \nabla_x \log \varrho_\varepsilon)dxdt
  \\
& \quad + \varepsilon \int_0^T \int_\Omega  \varrho_\varepsilon \nabla_x \nabla_x \log \varrho^E \cdot  \mathbf{u}_\varepsilon \cdot (\overline{\mathbf{u}} -
\mathbf{u}_\varepsilon)dxdt
\\
&\quad + 2\varepsilon^2 \int_0^T \int_\Omega  \varrho_\varepsilon \nabla_x \nabla_x \log \varrho^E \cdot
\mathbf{u}_\varepsilon \cdot (\nabla_x \log \varrho^E - \nabla_x \log  \varrho_\varepsilon)dxdt\\
& \quad - \int_0^T \int_\Omega \Big(\varrho_\varepsilon \partial_t  \mathbf{u}^E \cdot  (\overline{\mathbf{u}}-\mathbf{u}_\varepsilon) + \varrho_\varepsilon
\nabla_x \mathbf{u}^E \cdot \mathbf{u}^E \cdot   (\overline{\mathbf{u}}-\mathbf{u}_\varepsilon)
 \\
& \hspace{6 cm} +  \frac{\varrho_\varepsilon}{\varrho^E}\nabla_x p(\varrho^E) \cdot
  (\overline{\mathbf{u}}-\mathbf{u}_\varepsilon)\Big)dxdt.
\end{aligned}
\end{equation*}
Plugging \eqref{vw-terms} in \eqref{step-4}, we have
\begin{equation}  \label{step-5}
\hspace{-0.5 cm}\begin{aligned}
  \mathcal{E}(T,\cdot) - E(0,\cdot) & \leq \int_0^T \int_\Omega \varrho_\varepsilon
  \partial_t \overline{\mathbf{u}} \cdot (\overline{\mathbf{u}} - \mathbf{u}_\varepsilon)
  \\
  &\quad +  \varepsilon \int_0^T \int_\Omega \varrho_\varepsilon \partial_t \overline{\mathbf{u}} 
  \cdot  (\nabla_x \log \varrho^E - \nabla_x \log \varrho_\varepsilon)dxdt
  \\
 & \quad + \varepsilon \int_0^T \int_\Omega \varrho_\varepsilon
 \partial_t \nabla_x \log \varrho^E \cdot (\overline{\mathbf{u}} - \mathbf{u}_\varepsilon)dxdt
 \\
  &\quad +  2\varepsilon^2 \int_0^T \int_\Omega \varrho_\varepsilon \partial_t \nabla_x \log \varrho^E
  \cdot  (\nabla_x \log \varrho^E - \nabla_x \log \varrho_\varepsilon)dxdt
  \\
& \quad  + \int_0^T \int_\Omega  \varrho_\varepsilon \nabla_x \overline{\mathbf{u}}
  \cdot  \mathbf{u}_\varepsilon  \cdot  (\overline{\mathbf{u}} - \mathbf{u}_\varepsilon) dxdt
\\
  &\quad  +  \varepsilon \int_0^T \int_\Omega \varrho_\varepsilon \nabla_x \overline{\mathbf{u}}
  \cdot \mathbf{u}_\varepsilon
  \cdot   (\nabla_x \log \varrho^E - \nabla_x \log \varrho_\varepsilon)dxdt
  \\
& \quad + \varepsilon \int_0^T \int_\Omega
  \varrho_\varepsilon \nabla_x \nabla_x \log \varrho^E \cdot
  \mathbf{u}_\varepsilon \cdot (\overline{\mathbf{u}} -
  \mathbf{u}_\varepsilon)dxdt
  \\
  &\quad + 2\varepsilon^2 \int_0^T \int_\Omega
  \varrho_\varepsilon \nabla_x \nabla_x \log \varrho^E \cdot
  \mathbf{u}_\varepsilon \cdot (\nabla_x \log \varrho^E - \nabla_x \log
  \varrho_\varepsilon)dxdt
  \\
&  \quad - \int_0^T \int_\Omega\Big( \varrho_\varepsilon \partial_t
  \mathbf{u}^E \cdot  (\overline{\mathbf{u}}-\mathbf{u}_\varepsilon) + \varrho_\varepsilon
  \nabla_x \mathbf{u}^E \cdot \mathbf{u}^E \cdot
  (\overline{\mathbf{u}}-\mathbf{u}_\varepsilon)
\\
&  \hspace{6 cm} +   \frac{\varrho_\varepsilon}{\varrho^E}\nabla_x p(\varrho^E) \cdot
  (\overline{\mathbf{u}}-\mathbf{u}_\varepsilon)\Big)dxdt
      \\
&  \quad 
+\varepsilon\int_0^T \int_{\Omega} \varrho_\varepsilon \mathbb{D}(\mathbf{v}_\varepsilon) : \nabla_{x} \overline{\mathbf{v}}dxdt
    +\varepsilon\int_0^T \int_{\Omega} \varrho_\varepsilon \mathbb{A}(\mathbf{v}_\varepsilon) : \nabla_{x} \overline{\mathbf{v}} dxdt
    \\
    &\quad
    +\varepsilon\int_0^T \int_{\Omega} \varrho_\varepsilon \nabla_x^{\top} \mathbf{u}_\varepsilon : \nabla_{x} \overline{\mathbf{w}} dxdt
\end{aligned}
\end{equation}
\begin{equation*}
  \begin{aligned}
          \\
&  \quad 
    - \varepsilon\int_0^T \int_{\Omega}
    \varrho_\varepsilon \nabla_x \mathbf{w}_\varepsilon: \nabla_{x} \overline{\mathbf{v}} dxdt
    +r_1\int_0^T \int_{\Omega}
    \varrho_\varepsilon |\mathbf{v}_\varepsilon - \mathbf{w}_\varepsilon|(\mathbf{v}_\varepsilon - \mathbf{w}_\varepsilon)
    \cdot \overline{\mathbf{v}}   dxdt
    \\
&  \quad 
  - \int_0^T \int_\Omega  \left[  \partial_t (H'(\varrho^E))(\varrho_\varepsilon - \varrho^E)
  + \varrho_\varepsilon \mathbf{u}_\varepsilon \nabla_x (H'(\varrho^E)) + p(\varrho_\varepsilon) \textrm{div}_x \overline{\mathbf{v}}
  \right]  dxdt
  \\
&  \quad    -\int_0^T \int_\Omega     \left[   -p(\varrho^E)\textrm{div}_x \mathbf{u}^E
    - H'(\varrho^E)\partial_t \varrho^E  \right]   dxdt
  \\
&  \quad -2\varepsilon  \int_0^T \int_\Omega  \left( \sqrt{\varrho_\varepsilon}  \mathcal{S}(\mathbf{u}_\varepsilon)
    \mathbb{D}(\overline{\mathbf{u}})  + \sqrt{\varrho_\varepsilon} \mathcal{A}(\mathbf{u}_\varepsilon)   \mathbb{A}(\overline{\mathbf{u}})  \right)  dxdt
  \\
&  \quad +\varepsilon \int_0^T \int_\Omega \left(  \varrho_\varepsilon \left( |\mathbb{D}(\overline{\mathbf{u}})|^2
      + |\mathbb{A}(\overline{\mathbf{u}})|^2 \right) -  \frac{p'(\varrho_\varepsilon)}{\varrho_\varepsilon} |\nabla_x   \varrho_\varepsilon|^2 \right) dxdt
\end{aligned}
\end{equation*}
Now, we compute
\begin{equation} \label{H-prime}
  \begin{aligned}
  \partial_t (H'(\varrho^E)) &= - p'(\varrho^E)\textrm{div} \mathbf{u}^E
  - H''(\varrho^E)\nabla_x \varrho^E \cdot \mathbf{u}^E 
  \\[0.2 cm]
  & = - p'(\varrho^E)\textrm{div}_xn \mathbf{u}^E - \nabla_x (H'(\varrho^E)) \cdot \mathbf{u}^E.
\end{aligned}
\end{equation}
Consequently,
\begin{equation} \label{rew} 
  \begin{aligned}
    -\int_0^T& \int_\Omega \frac{\varrho_\varepsilon}{\varrho^E}\nabla_x p(\varrho^E)
    \cdot (\overline{\mathbf{u}} - \mathbf{u}_\varepsilon) dxdt  - \int_0^T \int_\Omega \partial_t
    (H'(\varrho^E))(\varrho_\varepsilon - \varrho^E)   dxdt
    \\
    &  - \int_0^T \int_\Omega  \varrho_\varepsilon \mathbf{u}_\varepsilon \nabla_x (H'(\varrho^E))
    dxdt   +  \int_0^T \int_\Omega  H'(\varrho^E)\partial_t \varrho^E   dxdt
    \\
    = & - \int_0^T \int_\Omega \varrho_\varepsilon \nabla_x (H'(\varrho^E))
    (\overline{\mathbf{u}} - \mathbf{u}_\varepsilon) dxdt
    + \int_0^T \int_\Omega p'(\varrho^E)(\varrho_\varepsilon - \varrho^E) \textrm{div} \mathbf{u}^E  dxdt
    \\
    & + \int_0^T \int_\Omega \nabla(H'(\varrho^E)) (\varrho_\varepsilon - \varrho^E) \cdot \mathbf{u}^E  dxdt
    - \int_0^T \int_\Omega \varrho_\varepsilon  \mathbf{u}_\varepsilon \nabla_x (H'(\varrho^E))  dxdt
    \\
    &  + \int_0^T \int_\Omega \varrho^E \nabla_x (H'(\varrho^E)) \cdot \mathbf{u}^E dxdt
    \\
    =  & \int_0^T \int_\Omega p'(\varrho^E)(\varrho_\varepsilon -  \varrho^E)\textrm{div} \mathbf{u}^E dxdt.
\end{aligned}
\end{equation}
Plugging \eqref{rew} in \eqref{step-5}, we obtain
\begin{equation*}
\begin{aligned}
  \mathcal{E}(T,\cdot) - E(0,\cdot)
& \leq  \int_0^T \int_\Omega \varrho_\varepsilon
  \partial_t \overline{\mathbf{u}} \cdot (\overline{\mathbf{u}} - \mathbf{u}_\varepsilon)
  +  \varepsilon \int \int \varrho_\varepsilon \partial_t \overline{\mathbf{u}} 
  \cdot  (\nabla_x \log \varrho^E - \nabla_x \log \varrho_\varepsilon)dxdt
  \\
 & \quad + \varepsilon \int_0^T \int_\Omega \varrho_\varepsilon
 \partial_t \nabla_x \log \varrho^E \cdot (\overline{\mathbf{u}} - \mathbf{u}_\varepsilon)dxdt
 \\
& \quad  +  2\varepsilon^2 \int_0^T \int_\Omega \varrho_\varepsilon \partial_t \nabla_x \log \varrho^E
  \cdot   (\nabla_x \log \varrho^E - \nabla_x \log \varrho_\varepsilon)dxdt\\
&  \quad + \int_0^T \int_\Omega 
  \varrho_\varepsilon \nabla_x \overline{\mathbf{u}}
  \cdot   \mathbf{u}_\varepsilon   \cdot  (\overline{\mathbf{u}} - \mathbf{u}_\varepsilon)   dxdt
  \\
& \quad   +  \varepsilon \int_0^T \int_\Omega \varrho_\varepsilon \nabla_x \overline{\mathbf{u}}
\cdot \mathbf{u}_\varepsilon   \cdot  (\nabla_x \log \varrho^E - \nabla_x \log \varrho_\varepsilon)dxdt\\
\end{aligned}
\end{equation*}
\begin{equation}  \label{step-6}
\hspace{1 cm} \begin{aligned}
    &\quad + \varepsilon \int_0^T \int_\Omega
    \varrho_\varepsilon \nabla_x \nabla_x \log \varrho^E \cdot
    \mathbf{u}_\varepsilon \cdot (\overline{\mathbf{u}} -
    \mathbf{u}_\varepsilon)dxdt
    \\
    &\quad + 2\varepsilon^2 \int_0^T \int_\Omega
    \varrho_\varepsilon \nabla_x \nabla_x \log \varrho^E \cdot
    \mathbf{u}_\varepsilon \cdot (\nabla_x \log \varrho^E - \nabla_x \log
    \varrho_\varepsilon)dxdt
    \\
    &  \quad - \int_0^T \int_\Omega \varrho_\varepsilon \partial_t
    \mathbf{u}^E \cdot  (\overline{\mathbf{u}}-\mathbf{u}_\varepsilon)
    + \varrho_\varepsilon \nabla_x \mathbf{u}^E \cdot \mathbf{u}^E \cdot
    (\overline{\mathbf{u}}-\mathbf{u}_\varepsilon) dxdt
    \\
    &\quad     +\varepsilon\int_0^T \int_{\Omega} \varrho_\varepsilon
    \mathbb{D}(\mathbf{v}_\varepsilon) : \nabla_{x} \overline{\mathbf{v}}dxdt
    +\varepsilon\int_0^T \int_{\Omega} \varrho_\varepsilon
    \mathbb{A}(\mathbf{v}_\varepsilon) : \nabla_{x} \overline{\mathbf{v}} dxdt
    \\
    &   \quad +r_1\int_0^T \int_{\Omega}  \varrho_\varepsilon |\mathbf{v}_\varepsilon
    - \mathbf{w}_\varepsilon|(\mathbf{v}_\varepsilon - \mathbf{w}_\varepsilon)
    \cdot \overline{\mathbf{v}}  dxdt
    \\
    &\quad  +\varepsilon\int_0^T \int_{\Omega} \varrho_\varepsilon
    \nabla_x^{\top} \mathbf{u}_\varepsilon : \nabla_{x} \overline{\mathbf{w}} dxdt
    - \varepsilon\int_0^T \int_{\Omega}  \varrho_\varepsilon
    \nabla_x \mathbf{w}_\varepsilon: \nabla_{x} \overline{\mathbf{v}} dxdt
\\
  &\quad   -\int_0^T \int_\Omega   \left[  -p(\varrho^E)\textrm{div}_x \mathbf{u}^E
    + p(\varrho_\varepsilon) \textrm{div}_x \overline{\mathbf{v}}
    - p'(\varrho^E)(\varrho_\varepsilon-\varrho^E)\text{div}_x \mathbf{u}^E  \right] dxdt
\\
   &\quad  -2\varepsilon  \int_0^T \int_\Omega \left( \sqrt{\varrho_\varepsilon} 
    \mathcal{S}(\mathbf{u}_\varepsilon) \mathbb{D}(\overline{\mathbf{u}})  +
    \sqrt{\varrho_\varepsilon}   \mathcal{A}(\mathbf{u}_\varepsilon)
    \mathbb{A}(\overline{\mathbf{u}})  \right)  dxdt
  \\
  &\quad +\varepsilon \int_0^T \int_\Omega \left(
    \varrho_\varepsilon \left( |\mathbb{D}(\overline{\mathbf{u}})|^2
      + |\mathbb{A}(\overline{\mathbf{u}})|^2 \right) -
    \frac{p'(\varrho_\varepsilon)}{\varrho_\varepsilon} |\nabla_x
    \varrho_\varepsilon|^2 \right) dxdt
  \end{aligned}
\end{equation}
After some calculations, we can rewrite \eqref{step-6} as follows
\begin{equation*}
\begin{aligned}
  \mathcal{E}(T,\cdot) - E(0,\cdot) & \leq
  - \int_0^T \int_\Omega \varrho_\varepsilon
  \partial_t \mathbf{v}_{bl} \cdot (\overline{\mathbf{u}} -
  \mathbf{u}_\varepsilon)
 +  \varepsilon \int_0^T \int_\Omega \varrho_\varepsilon \partial_t \overline{\mathbf{u}} 
  \cdot  (\nabla_x \log \varrho^E - \nabla_x \log \varrho_\varepsilon)dxdt
  \\
 & \quad + \varepsilon \int_0^T \int_\Omega \varrho_\varepsilon
  \partial_t \nabla_x \log \varrho^E \cdot (\overline{\mathbf{u}} -
  \mathbf{u}_\varepsilon)dxdt
  \\
  & \quad +  2\varepsilon^2 \int_0^T \int_\Omega \varrho_\varepsilon \partial_t \nabla_x \log \varrho^E
  \cdot  (\nabla_x \log \varrho^E - \nabla_x \log
  \varrho_\varepsilon)dxdt
  \\
&  \quad + \int_0^T \int_\Omega  \varrho_\varepsilon \nabla_x
\overline{\mathbf{u}}  \cdot  \mathbf{u}_\varepsilon  \cdot  (\overline{\mathbf{u}} - \mathbf{u}_\varepsilon)
  dxdt  - \int_0^T \int_\Omega \varrho_\varepsilon  \nabla_x \mathbf{u}^E \cdot \mathbf{u}^E \cdot
  (\overline{\mathbf{u}}-\mathbf{u}_\varepsilon) de xdt
\\
  & \quad +
  \varepsilon \int_0^T \int_\Omega \varrho_\varepsilon \nabla_x \overline{\mathbf{u}}
  \cdot \mathbf{u}_\varepsilon   \cdot  (\nabla_x \log \varrho^E - \nabla_x \log \varrho_\varepsilon)dxdt
  \\
  &\quad + \varepsilon \int_0^T \int_\Omega  \varrho_\varepsilon \nabla_x \nabla_x \log \varrho^E \cdot
  \mathbf{u}_\varepsilon \cdot (\overline{\mathbf{u}}  -  \mathbf{u}_\varepsilon)dxdt
    \\
  & \quad + 2\varepsilon^2 \int_0^T \int_\Omega  \varrho_\varepsilon \nabla_x \nabla_x \log \varrho^E \cdot
  \mathbf{u}_\varepsilon \cdot (\nabla_x \log \varrho^E - \nabla_x   \log   \varrho_\varepsilon)dxdt
  \\
   &\quad  +\varepsilon\int_0^T \int_{\Omega} \varrho_\varepsilon \mathbb{D}(\mathbf{v}_\varepsilon) : \nabla_{x} \overline{\mathbf{v}}dxdt
   +\varepsilon\int_0^T \int_{\Omega} \varrho_\varepsilon \mathbb{A}(\mathbf{v}_\varepsilon) : \nabla_{x} \overline{\mathbf{v}} dxdt
   \\
   &\quad - \varepsilon\int_0^T \int_{\Omega}
   \varrho_\varepsilon \nabla_x \mathbf{w}_\varepsilon: \nabla_{x} \overline{\mathbf{v}} dxdt
     +\varepsilon\int_0^T \int_{\Omega} \varrho_\varepsilon \nabla_x^{\top} \mathbf{u}_\varepsilon : \nabla_{x} \overline{\mathbf{w}} dxdt
\\
&\quad    +r_1\int_0^T \int_{\Omega}  \varrho_\varepsilon |\mathbf{v}_\varepsilon - \mathbf{w}_\varepsilon|(\mathbf{v}_\varepsilon
- \mathbf{w}_\varepsilon)  \cdot \overline{\mathbf{v}}   dxdt
\end{aligned}
\end{equation*}
\begin{equation} \label{step-7}
  \begin{aligned}
 &\quad   -\int_0^T \int_\Omega \left[ -p(\varrho^E)\textrm{div}_x \mathbf{u}^E
    + p(\varrho_\varepsilon) \textrm{div}_x \overline{\mathbf{v}}
    - p'(\varrho^E)(\varrho_\varepsilon-\varrho^E)\text{div}_x \mathbf{u}^E  \right]  dxdt
\\
  &\quad  -2\varepsilon \int_0^T \int_\Omega  \left( \sqrt{\varrho_\varepsilon}
    \mathcal{S}(\mathbf{u}_\varepsilon) \mathbb{D}(\overline{\mathbf{u}})
    +  \sqrt{\varrho_\varepsilon} \mathcal{A}(\mathbf{u}_\varepsilon)
    \mathbb{A}(\overline{\mathbf{u}}) \right)  dxdt
\\
&\quad  +\varepsilon \int_0^T \int_\Omega \left(
    \varrho_\varepsilon \left( |\mathbb{D}(\overline{\mathbf{u}})|^2
      + |\mathbb{A}(\overline{\mathbf{u}})|^2 \right) -
    \frac{p'(\varrho_\varepsilon)}{\varrho_\varepsilon} |\nabla_x
    \varrho_\varepsilon|^2 \right) dxdt
\end{aligned}
\end{equation}
Now, we observe that
\begin{equation*}
  \begin{aligned}
-\varepsilon \int_0^T &\int_\Omega \frac{p'(\varrho_\varepsilon)}{\varrho_\varepsilon} |\nabla_x
\varrho_\varepsilon|^2dxdt
\\
& = -\varepsilon \int_0^T \int_\Omega \varrho_\varepsilon (p'(\varrho_\varepsilon)\nabla_x \log
\varrho_\varepsilon-p'(\varrho^E)\nabla_x \log \varrho^E)(\nabla_x \log \varrho_\varepsilon - \nabla_x \log \varrho^E) dxdt
\\
&\quad + \varepsilon \int_0^T \int_\Omega \frac{\varrho_\varepsilon}{\varrho^E}p'(\varrho^E)\nabla_x \varrho^E \left(
\frac{\nabla_x \varrho^E}{\varrho^E} - \frac{\nabla_x \varrho_\varepsilon}{\varrho_\varepsilon} \right) dxdt
\\
&\quad - \varepsilon \int_0^T \int_\Omega p'(\varrho_\varepsilon)\nabla_x \varrho_\varepsilon \frac{\nabla_x \varrho^E}{\varrho^E} dxdt.
\end{aligned}
\end{equation*}
Consequently, rewriting \eqref{step-7}, we derive the relative energy in its final version as follows
\begin{equation} \label{step-8}
  \begin{aligned}
    \mathcal{E}(T,\cdot)  - E(0,\cdot)
   +\varepsilon \int_0^T \int_\Omega
  \varrho_\varepsilon  \big(p'(\varrho_\varepsilon)&\nabla_x \log \varrho_\varepsilon-p'(\varrho^E)
  \nabla_x \log \varrho^E\big)
   \\
   & \quad \times(\nabla_x \log \varrho_\varepsilon - \nabla_x \log \varrho^E)
  dxdt  \leq \sum_{i=1}^{11}R_i,
%  R_1 + R_2 + R_3 + R_4 + R_5 + R_6 + R_7 + R_8
%  + R_9 + R_{10} + R_{11},
\end{aligned}
\end{equation}
where
\begin{equation*}
\begin{aligned}
& R_1 = \int_0^T \int_\Omega \varrho_\varepsilon
  \partial_t \mathbf{v}_{bl} \cdot (\overline{\mathbf{u}} - \mathbf{u}_\varepsilon)dxdt,
\\
& R_2 = \varepsilon \int_0^T \int_\Omega \varrho_\varepsilon \partial_t \overline{\mathbf{u}} 
\cdot (\nabla_x \log \varrho^E - \nabla_x \log \varrho_\varepsilon)dxdt,
\\
& R_3 = \varepsilon \int_0^T \int_\Omega \varrho_\varepsilon
\partial_t \nabla_x \log \varrho^E \cdot (\overline{\mathbf{u}} - \mathbf{u}_\varepsilon)dxdt,
\\
& R_4 = 2\varepsilon^2 \int_0^T \int_\Omega \varrho_\varepsilon \partial_t \nabla_x \log \varrho^E
\cdot (\nabla_x \log \varrho^E - \nabla_x \log \varrho_\varepsilon)dxdt,
\\
& R_5 = \int_0^T \int_\Omega \varrho_\varepsilon \nabla_x \overline{\mathbf{u}}
  \cdot \mathbf{u}_\varepsilon \cdot  (\overline{\mathbf{u}} - \mathbf{u}_\varepsilon)  dxdt
  - \int_0^T \int_\Omega \varrho_\varepsilon \nabla_x \mathbf{u}^E \cdot \mathbf{u}^E \cdot
  (\overline{\mathbf{u}}-\mathbf{u}_\varepsilon) dxdt,
\\
& R_6 = \varepsilon \int_0^T \int_\Omega \varrho_\varepsilon \nabla_x \overline{\mathbf{u}}
\cdot \mathbf{u}_\varepsilon \cdot (\nabla_x \log \varrho^E - \nabla_x \log \varrho_\varepsilon)dxdt,
\\
& R_7 = \varepsilon \int_0^T \int_\Omega \varrho_\varepsilon \nabla_x \nabla_x \log \varrho^E \cdot
\mathbf{u}_\varepsilon \cdot (\overline{\mathbf{u}} - \mathbf{u}_\varepsilon)dxdt,
\\
&R_8 = 2\varepsilon^2 \int_0^T \int_\Omega \varrho_\varepsilon \nabla_x \nabla_x \log \varrho^E \cdot
\mathbf{u}_\varepsilon \cdot (\nabla_x \log \varrho^E - \nabla_x \log \varrho_\varepsilon)dxdt,
\end{aligned}
\end{equation*}
\begin{align*}
  & \begin{aligned}
    R_9 & = \varepsilon\int_0^T \int_{\Omega} \varrho_\varepsilon \mathbb{D}(\mathbf{v}_\varepsilon) : \nabla_{x} \overline{\mathbf{v}}dxdt
+\varepsilon\int_0^T \int_{\Omega} \varrho_\varepsilon \mathbb{A}(\mathbf{v}_\varepsilon) : \nabla_{x} \overline{\mathbf{v}} dxdt
- \varepsilon\int_0^T \int_{\Omega} \varrho_\varepsilon \nabla_x \mathbf{w}_\varepsilon: \nabla_{x} \overline{\mathbf{v}} dxdt,
\\
& \quad +\varepsilon\int_0^T \int_{\Omega} \varrho_\varepsilon \nabla_x^{\top} \mathbf{u}_\varepsilon : \nabla_{x} \overline{\mathbf{w}} dxdt
-2\varepsilon \int_0^T \int_\Omega \left( \sqrt{\varrho_\varepsilon} \mathcal{S}(\mathbf{u}_\varepsilon) \mathbb{D}(\overline{\mathbf{u}})
+ \sqrt{\varrho_\varepsilon} \mathcal{A}(\mathbf{u}_\varepsilon) \mathbb{A}(\overline{\mathbf{u}}) \right) dxdt
\\
& \quad +\varepsilon \int_0^T \int_\Omega \left( \varrho_\varepsilon \left( |\mathbb{D}(\overline{\mathbf{u}})|^2
    + |\mathbb{A}(\overline{\mathbf{u}})|^2 \right) \right) dxdt,
\end{aligned}
  \\
&\begin{aligned}
R_{10} & = -\int_0^T \int_\Omega  \left[ -p(\varrho^E)\textrm{div}_x \mathbf{u}^E
+ p(\varrho_\varepsilon) \textrm{div}_x \overline{\mathbf{v}}
- p'(\varrho^E)(\varrho_\varepsilon-\varrho^E)\text{div}_x \mathbf{u}^E \right] dxdt
\\
&\quad + \varepsilon \int_0^T \int_\Omega
\frac{\varrho_\varepsilon}{\varrho^E}p'(\varrho^E)\nabla_x \varrho^E \left(
\frac{\nabla_x \varrho^E}{\varrho^E} - \frac{\nabla_x \varrho_\varepsilon}{\varrho_\varepsilon}
\right)
dxdt
\\
&\quad - \varepsilon \int_0^T \int_\Omega
p'(\varrho_\varepsilon)\nabla_x \varrho_\varepsilon \frac{\nabla_x \varrho^E}{\varrho^E}
dxdt,
\end{aligned}
\\
& R_{11} = r_1\int_0^T \int_{\Omega} \varrho_\varepsilon |\mathbf{v}_\varepsilon
- \mathbf{w}_\varepsilon|(\mathbf{v}_\varepsilon - \mathbf{w}_\varepsilon)
\cdot \overline{\mathbf{v}} dxdt.
\end{align*}
% 
% ------------
% Luca 2022/04/26
% ------------
%
\subsection{Kato type ``fake" boundary layer}
\label{subsec:fake-boundary-layer}
In the spirit of \cite{Ka} (see also \cite{BaNg}, \cite{Su} and
\cite{WZ}), we introduce a Kato type "fake" boundary layer. We
consider $(\varrho^E, \mathbf{u}^E)$ the strong solution of
\eqref{cont-E}--\eqref{bc-E}, and we define
\begin{equation} \label{fl}
  \mathbf{v}_{bl} \triangleq \xi \Big(
  \frac{d_{\Omega} (x)}{\delta} \Big) \mathbf{u}^E,%|_{\partial \Omega},
\end{equation}
with $\delta=\delta(\varepsilon)\to 0$ as $\varepsilon \to 0$ with the
rate $\delta(\varepsilon)$ given below, and $\xi : [0,+\infty)
\rightarrow [0,+\infty)$  is a smooth cut-off function, such that
\begin{equation*}
  \xi(0)=1, \, \,  \textrm{ supp } \xi\subseteq [0,1),
  \,\,\, \|\xi\|_{L^\infty}<\infty, \,\,\, \|\xi'\|_{L^\infty}<\infty.
\end{equation*}
It follows that $\mathbf{v}_{bl} = \mathbf{u}^E$ on the boundary
$[0,T] \times \partial \Omega$ and it has support in the domain
$[0,T] \times \Gamma_\varepsilon$ and recall that
$\Gamma_{c\varepsilon} = \{ x\in \Omega \ : \ d_\Omega(x) \leq c
\varepsilon \}$.  Moreover, the quantity $\mathbf{v}_{bl}$ satisfies
the following properties (see \cite{BaNg}, \cite{Ka}, \cite{Su})
with $\delta =c\varepsilon$, $c>0$: for
$1\leq p < + \infty$, and for $\varepsilon\to 0$, we have
\begin{equation} \label{v-prop}
  \begin{aligned}
    &\|\mathbf{v}_{bl}\|_{C([0,T]\times \Omega)} \leq C,
    \,\,\,
    \|\bv_{bl}\|_{C([0,T]\times L^p(\Omega))} \leq C
    \varepsilon^{\frac{1}{p}},
    \,\,\,
    \|\D_t \bv_{bl}\|_{C([0,T]\times L^p(\Omega))} \leq C
    \varepsilon^{\frac{1}{p}},
    \\[0.2 cm]
    &\|\D_t \bv_{bl}\|_{L^\infty([0,T]\times \Omega)} \leq C,
    \,\,\, \|\nabla_x \bv_{bl}\|_{L^\infty([0,T]\times
      \Omega)} = O( \varepsilon^{-1}),
    \\[0.2 cm]
    &\|\text{div}_x \bv_{bl}\|_{L^\infty([0,T]\times \Omega)} \leq C,
    \,\,\, 
    \|\text{div}_x \bv_{bl}\|_{C([0,T]\times L^p(\Omega))} \leq C
    \varepsilon^{\frac{1}{p}},
    \\[0.2 cm]
    &\|d_\Omega(x) \nabla_x \bv_{bl}\|_{L^\infty([0,T]\times
    \Omega)} \leq C,
    \,\,\, 
    \|d^2_\Omega(x) \nabla_x
    \bv_{bl}\|_{L^\infty([0,T]\times \Omega)} \leq C\varepsilon,
        \\[0.2 cm]
        &\|d_\Omega(x)\nabla_x \bv_{bl}\|_{L^\infty([0,T]; L^2(\Omega))}= O(\varepsilon^{\frac{1}{2}}),
          \,\,\,
       \|d_\Omega(x)\nabla_x \bv_{bl}\|_{L^\infty([0,T]; L^p(\Omega))}= O(\varepsilon^{\frac{1}{p}}). 
  \end{aligned}
\end{equation}

\begin{remark}\label{Su}
To prove the  estimates
\eqref{v-prop}, the following quantities have been introduced
\begin{equation} \label{xiz}
  \begin{aligned}
    & z(x)= \xi \left(\frac{d_\Omega(x)}{\delta}\right), \ \ \ \
    \widetilde{\xi}(r) = r \xi'(r), \ \ \ \ \widetilde{z}(x) =
    \widetilde{\xi} \left(\frac{d_\Omega(x)}{\delta}\right),
    \\[0.2 cm]
    & \widehat{\xi}(r) = r^2 \xi'(r), \ \ \ \ \widehat{z}(x) =
    \widehat{\xi} \left(\frac{d_\Omega(x)}{\delta}\right), \ \ \ \
    \mathbf{v}_{bl} = z \mathbf{u}^E
  \end{aligned}
\end{equation}
and recall that $\delta =\delta(\varepsilon)=c\varepsilon, c>0$. 
Relations \eqref{xiz} will be used in the subsequent analysis of the inviscid
limit in order to handle some of the viscous terms
(see Subsection~\ref{subsec:viscous} below).
Moreover, with the above notation, we have (see \cite[p. 170]{Su})
\begin{equation} \label{norm-vbl}
  \begin{aligned}
    \mathbf{n}\cdot \nabla_x\mathbf{v}_{bl} &=
     z \mathbf{n}\cdot \nabla_x\mathbf{u}^E + \frac{1}{c\varepsilon}\xi'\big(\frac{d_\Omega(x)}{c\varepsilon}\big)\mathbf{u}^E\\
 &=       z \mathbf{n}\cdot \nabla_x\mathbf{u}^E + \frac{1}{d_\Omega}\widetilde{z}\, \mathbf{n}\cdot \mathbf{n}\otimes\mathbf{u}^E
  \end{aligned}
\end{equation}
and
\begin{equation} \label{div-vbl}
  \begin{aligned}
    \div \mathbf{v}_{bl} &= z \div \mathbf{u}^E + \mathbf{u}^E \cdot \nabla_xz\\
     &= z \div \mathbf{u}^E + \frac{\mathbf{u}^E \cdot \mathbf{n}}{d_\Omega}\widetilde{z}.\\
  \end{aligned}
\end{equation}
\end{remark}

\subsection{Inviscid limit} \label{sec:inviscid-lim}
We now consider
$\overline{\mathbf{u}} = \mathbf{u}^E - \mathbf{v}_{bl}$ in
\eqref{step-8}. In what follows we provide suitable bounds, and we
pass to the limit as $\varepsilon \to 0$, for the terms $R_i$,
$i=1,\ldots, 11$ involved in \eqref{step-8}.
For simplicity, and without loss of generality, we set $\widetilde{\delta}(\varepsilon)=\varepsilon$.
\subsubsection{Time dependent term}
For $R_1$, we have
\begingroup
\allowdisplaybreaks
\begin{align*}
  \int_0^T \int_\Omega \varrho_\varepsilon &
  \partial_t \mathbf{v}_{bl} \cdot (\overline{\mathbf{u}} - \mathbf{u}_\varepsilon)dxdt
  \\
  &=
    \int_0^T \int_\Omega \varrho_\varepsilon
    \partial_t \mathbf{v}_{bl} \cdot [\overline{\mathbf{v}} - \mathbf{v}_\varepsilon
    -(\overline{\mathbf{w}} - \mathbf{w}_\varepsilon)]dxdt
  \\
  & = \int_0^T \int_\Omega \varrho_\varepsilon
    \partial_t \mathbf{v}_{bl} \cdot (\overline{\mathbf{v}} - \mathbf{v}_\varepsilon)dxdt
    -\int_0^T \int_\Omega \varrho_\varepsilon
    \partial_t \mathbf{v}_{bl} \cdot (\overline{\mathbf{w}} - \mathbf{w}_\varepsilon)dxdt
  \\
  & \leq \int_0^T \left( \int_\Omega \varrho_\varepsilon  |\partial_t \mathbf{v}_{bl}|^2dx\right)^{1/2}
    \left(  \int_\Omega  \varrho_\varepsilon |\overline{\mathbf{v}}-\mathbf{v}_\varepsilon|^2  dx \right)^{1/2}dt
  \\
& \quad     +  \int_0^T    \left(  \int_\Omega  \varrho_\varepsilon |\partial_t \mathbf{v}_{bl}|^2  dx  \right)^{1/2}
    \left(  \int_\Omega \varrho_\varepsilon |\overline{\mathbf{w}}-\mathbf{w}_\varepsilon|^2   dx  \right)^{1/2}dt
\\
&    \leq    C \varepsilon^{\frac{1}{p}}   + C\int_0^T  \mathcal{E}(t,\cdot)dt.
\end{align*}
\endgroup

\subsubsection{Convective terms}
For $R_2$, we have
\begin{align*}
  \varepsilon \int_0^T \int_\Omega \varrho_\varepsilon &\partial_t \overline{\mathbf{u}} 
  \cdot (\nabla_x  \log \varrho^E  - \nabla_x \log \varrho_\varepsilon)dxdt
  \\
  & = \varepsilon \int_0^T \int_\Omega \varrho_\varepsilon \partial_t (\mathbf{u}^E - \mathbf{v}_{bl}) 
    \cdot  (\nabla_x \log \varrho^E - \nabla_x \log \varrho_\varepsilon)  dxdt
  \\
  & = \varepsilon \int_0^T \int_\Omega \left(\varrho_\varepsilon \partial_t \mathbf{u}^E 
    \cdot \nabla_x \log \varrho^E  - \varrho_\varepsilon \partial_t \mathbf{u}^E
    \cdot \nabla_x \log \varrho_\varepsilon \right)dxdt
  \\
  &\quad  - \varepsilon \int_0^T \int_\Omega \left(\varrho_\varepsilon \partial_t \mathbf{v}_{bl} 
    \cdot \nabla_x \log \varrho^E - \varrho_\varepsilon \partial_t \mathbf{v}_{bl} 
    \cdot \nabla_x \log \varrho_\varepsilon \right)dxdt
  \\
  & = \bigg[ \varepsilon \int_0^T \int_\Omega  \left( \varrho_\varepsilon \partial_t \mathbf{u}^E 
    \cdot \nabla_x \log \varrho^E - \partial_t \mathbf{u}^E  \cdot \nabla_x \sqrt{\varrho_\varepsilon}
    \sqrt{\varrho_\varepsilon} \right) dxdt
  \\
  & \quad  - \varepsilon \int_0^T \int_\Omega  \left( \varrho_\varepsilon \partial_t \mathbf{v}_{bl} 
    \cdot \nabla_x \log \varrho^E  - \partial_t \mathbf{v}_{bl} \cdot \nabla_x \sqrt{\varrho_\varepsilon}
    \sqrt{\varrho_\varepsilon}\right)dxdt \bigg] \longrightarrow 0  \, \, \textrm{ as } \, \, \varepsilon \to 0.
\end{align*}

For the term $R_3$, we have
\begin{align*}
  \varepsilon \int_0^T \int_\Omega \varrho_\varepsilon 
  \partial_t \nabla_x & \log \varrho^E  \cdot (\overline{\mathbf{u}}  - \mathbf{u}_\varepsilon)dxdt
  \\
& =
  \varepsilon \int_0^T \int_\Omega \varrho_\varepsilon
  \partial_t \nabla_x \log \varrho^E \cdot (\mathbf{u}^E - \mathbf{v}_{bl} - \mathbf{u}_\varepsilon)
  dxdt
\\
 &  = \varepsilon \int_0^T \int_\Omega \varrho_\varepsilon \mathbf{u}^E \partial_t \nabla_x \log \varrho^E dxdt
 - \varepsilon \int_0^T \int_\Omega \varrho_\varepsilon \mathbf{v}_{bl} \partial_t \nabla_x \log \varrho^E dxdt
  \\
 &\quad - \varepsilon \int_0^T \int_\Omega \varrho_\varepsilon \mathbf{u}_\varepsilon
 \partial_t \nabla_x \log \varrho^E dxdt
\\
 & = \bigg[ \varepsilon \int_0^T \int_\Omega \varrho_\varepsilon \mathbf{u}^E  \partial_t \nabla_x \log \varrho^E
    dxdt  - \varepsilon \int_0^T \int_\Omega \varrho_\varepsilon  \mathbf{v}_{bl} \partial_t \nabla_x \log \varrho^E
    dxdt
 \\
&  \quad  - \varepsilon \int_0^T \int_\Omega \sqrt{\varrho_\varepsilon} \sqrt{\varrho_\varepsilon}  \mathbf{u}_\varepsilon
    \partial_t \nabla_x \log \varrho^E   dxdt  \bigg]  \longrightarrow 0   \, \, \textrm{ as } \, \, \varepsilon \to 0.
\end{align*}

Again, for $R_4$, we have
\begin{align*}
  2\varepsilon^2 \int_0^T \int_\Omega \varrho_\varepsilon \partial_t \nabla_x \log \varrho^E&
\cdot (\nabla_x \log \varrho^E - \nabla_x \log \varrho_\varepsilon)dxdt
\\
 & =
  \bigg[ 2\varepsilon^2 \int_0^T \int_\Omega \varrho_\varepsilon \partial_t \nabla_x \log \varrho^E
    \cdot  \nabla_x \log \varrho^E  dxdt
  \\
  &  \quad - 2\varepsilon^2 \int_0^T \int_\Omega \partial_t \nabla_x \log \varrho^E
    \cdot \nabla_x  \sqrt{\varrho_\varepsilon} \sqrt{\varrho_\varepsilon} dxdt \bigg]
  \longrightarrow 0 \, \, \textrm{ as } \, \,\varepsilon \to 0.
\end{align*}

Let us now consider the term $R_5$, to get
\begin{equation} \label{r4}
\begin{aligned}
  \int_0^T \int_\Omega  &\varrho_\varepsilon \nabla_x \overline{\mathbf{u}}
  \cdot \mathbf{u}_\varepsilon \cdot (\overline{\mathbf{u}} - \mathbf{u}_\varepsilon)
  dxdt - \int_0^T \int_\Omega \varrho_\varepsilon \nabla_x \mathbf{u}^E \cdot \mathbf{u}^E \cdot
  (\overline{\mathbf{u}}-\mathbf{u}_\varepsilon) dxdt
\\
&    = \int_0^T \int_\Omega
     \varrho_\varepsilon (\overline{\mathbf{u}}-\mathbf{u}_\varepsilon)(\nabla_x \overline{\mathbf{u}}
     \ \mathbf{u}_\varepsilon - \nabla_x \mathbf{u}^E \mathbf{u}^E)     dxdt
\\
&    = \int_0^T \int_\Omega
     \varrho_\varepsilon (\overline{\mathbf{u}}-\mathbf{u}_\varepsilon)(\nabla_x \overline{\mathbf{u}}
     \ \mathbf{u}_\varepsilon + \nabla_x \overline{\mathbf{u}} \ \mathbf{u}^E -\nabla_x \overline{\mathbf{u}}
     \  \mathbf{u}^E - \nabla_x \mathbf{u}^E \mathbf{u}^E)  dxdt
 \\
&    = \int_0^T \int_\Omega \varrho_\varepsilon (\overline{\mathbf{u}}-\mathbf{u}_\varepsilon)
    \left[ \nabla_x \overline{\mathbf{u}} (\mathbf{u}_\varepsilon - \mathbf{u}^E)
    -\mathbf{u}^E \nabla_x \mathbf{v}_{bl} \right]  dxdt
\\
&   = \int_0^T \int_\Omega \varrho_\varepsilon (\overline{\mathbf{u}}-\mathbf{u}_\varepsilon)
\left[  \nabla_x \overline{\mathbf{u}}  (\mathbf{u}_\varepsilon - \overline{\mathbf{u}} - \mathbf{v}_{bl})
  -\mathbf{u}^E \nabla_x \mathbf{v}_{bl}  \right]  dxdt
\\
& = \int_0^T \int_\Omega \varrho_\varepsilon \nabla_x \overline{\mathbf{u}} \ (\overline{\mathbf{u}}
-\mathbf{u}_\varepsilon) (\mathbf{u}_\varepsilon-\overline{\mathbf{u}})dxdt
    - \int_0^T \int_\Omega \varrho_\varepsilon  (\overline{\mathbf{u}}-\mathbf{u}_\varepsilon)
    \nabla_x \overline{\mathbf{u}} \ \mathbf{v}_{bl}  dxdt
    \\
&\qquad    - \int_0^T \int_\Omega \varrho_\varepsilon  (\overline{\mathbf{u}}-\mathbf{u}_\varepsilon)
    \mathbf{u}^E \nabla_x \mathbf{v}_{bl}dxdt
\\
&    = \widetilde{R_5}  - \int_0^T \int_\Omega  \varrho_\varepsilon (\overline{\mathbf{u}}-\mathbf{u}_\varepsilon)
    \nabla_x \overline{\mathbf{u}} \ \mathbf{v}_{bl}  dxdt - \int_0^T \int_\Omega \varrho_\varepsilon
    (\overline{\mathbf{u}}-\mathbf{u}_\varepsilon) \mathbf{u}^E \nabla_x \mathbf{v}_{bl}dxdt.
\end{aligned}
\end{equation}
%
% ------------
% Fine Luca 2022/04/26
% ------------
%

%
% ------------
% Luca 2022/04/27
% ------------
%
For $\widetilde{R_5}$, we have
\begin{align*}
  \widetilde{R_5} & = \int_0^T \int_\Omega \varrho_\varepsilon \nabla_x \overline{\mathbf{u}}
  \cdot  (\mathbf{u}_\varepsilon - \overline{\mathbf{u}}) \cdot (\overline{\mathbf{u}}
  - \mathbf{u}_\varepsilon) dxdt
\\
 & = \int_0^T \int_\Omega \varrho_\varepsilon (\nabla_x \mathbf{u}^E - \nabla_x \mathbf{v}_{bl})
 (\mathbf{u}_\varepsilon - \mathbf{u}^E + \mathbf{v}_{bl})\cdot (\mathbf{u}^E - \mathbf{v}_{bl}
 - \mathbf{u}_\varepsilon) dxdt
\\
&  = \int_0^T \int_\Omega \varrho_\varepsilon \nabla_x \mathbf{u}^E \mathbf{v}_{bl}\cdot
  (\mathbf{u}^E - \mathbf{v}_{bl} - \mathbf{u}_\varepsilon) dxdt
+ \int_0^T \int_\Omega \varrho_\varepsilon \nabla_x \mathbf{u}^E (\mathbf{u}_\varepsilon - \mathbf{u}^E)
     \cdot (\mathbf{u}^E - \mathbf{v}_{bl}  - \mathbf{u}_\varepsilon)  dxdt
     \\
&\quad  - \int_0^T \int_\Omega \varrho_\varepsilon \mathbf{u}_\varepsilon \nabla_x \mathbf{v}_{bl}\cdot
  (\mathbf{u}^E - \mathbf{v}_{bl} - \mathbf{u}_\varepsilon) dxdt
- \int_0^T \int_\Omega \varrho_\varepsilon \nabla_x \mathbf{v}_{bl}   (\mathbf{v}_{bl} - \mathbf{u}^E)\cdot
  (\mathbf{u}^E - \mathbf{v}_{bl} - \mathbf{u}_\varepsilon)  dxdt
\\
 & = \widetilde{R_{5,1}} + \widetilde{R_{5,2}} + \widetilde{R_{5,3}} + \widetilde{R_{5,4}}. 
\end{align*}

For $\widetilde{R_{5,1}}$, integrating by parts, we have 
\begin{align*} 
  \int_0^T  \int_\Omega& \varrho_\varepsilon \nabla_x \mathbf{u}^E \mathbf{v}_{bl}
  (\overline{\mathbf{u}} - \mathbf{u}_\varepsilon) dxdt
  \\
&  =  \int_0^T \int_\Omega \varrho_\varepsilon \nabla_x \mathbf{u}^E \mathbf{v}_{bl}
    \overline{\mathbf{u}}  dxdt
  - \int_0^T \int_\Omega \varrho_\varepsilon \nabla_x \mathbf{u}^E \mathbf{v}_{bl}
  \mathbf{u}_\varepsilon  dxdt
\\
&  = - \int_0^T \int_\Omega \nabla_x \varrho_\varepsilon  \mathbf{u}^E (z \mathbf{u}^E)
    \overline{\mathbf{u}} dxdt
     - \int_0^T \int_\Omega \varrho_\varepsilon \mathbf{u}^E \text{div}_x \mathbf{v}_{bl} \overline{\mathbf{u}} dxdt
\\
&\quad    - \int_0^T \int_\Omega  \varrho_\varepsilon \mathbf{u}^E (z \mathbf{u}^E) \text{div}_x \overline{\mathbf{u}}  dxdt
- \int_0^T \int_\Omega  \varrho_\varepsilon \nabla_x \mathbf{u}^E (z \mathbf{u}^E) \mathbf{u}_\varepsilon  dxdt,
\end{align*}
and hence
  \begin{equation} \label{passaggio-al-limite-R41}
 \begin{aligned} 
      \int_0^T &\int_\Omega  \varrho_\varepsilon \nabla_x \mathbf{u}^E \mathbf{v}_{bl}
      (\overline{\mathbf{u}} - \mathbf{u}_\varepsilon) dxdt
      \\
      &  =  - 2\int_0^T \int_\Omega \sqrt{\varrho_\varepsilon}\nabla_x \sqrt{\varrho_\varepsilon} 
      \mathbf{u}^E \mathbf{v}_{bl} \overline{\mathbf{u}} dxdt
      - \int_0^T \int_\Omega \sqrt{\varrho_\varepsilon} \sqrt{\varrho_\varepsilon}\mathbf{u}^E \text{div}_x
      \mathbf{v}_{bl} \,\overline{\mathbf{u}} dxdt
      \\
      &\quad  - \int_0^T \int_\Omega \sqrt{\varrho_\varepsilon} \sqrt{\varrho_\varepsilon} \mathbf{u}^E \mathbf{v}_{bl}
      \text{div}_x \overline{\mathbf{u}} dxdt
      - \int_0^T \int_\Omega  \sqrt{\varrho_\varepsilon} \sqrt{\varrho_\varepsilon}\nabla_x \mathbf{u}^E \mathbf{v}_{bl}
      \mathbf{u}_\varepsilon  dxdt
      \\
      &\leq \left[ C \|\mathbf{v}_{bl}\|_{L^\infty(0,T;L^{\frac{2\gamma}{\gamma-1}}(\Omega))} 
        + C \|\text{div}_x \mathbf{v}_{bl}\|_{L^\infty(0,T;L^{\frac{2\gamma}{\gamma-1}}(\Omega))}\right]
\longrightarrow 0 \, \, \textrm{ as } \, \, \varepsilon \to 0.
\end{aligned}
\end{equation}
Indeed, the first term on the right-hand side can be bounded as follows
\begin{align*}
  \left| 2\int_0^T \int_\Omega \sqrt{\varrho_\varepsilon}\nabla_x \sqrt{\varrho_\varepsilon} 
    \mathbf{u}^E \mathbf{v}_{bl} \overline{\mathbf{u}} dxdt \right|\leq 2\int_0^T \Big(\|\sqrt{\varrho_\varepsilon}\|_{L^{2\gamma}(\Omega)}
  &\|\nabla_x\sqrt{\varrho_\varepsilon}\, \mathbf{u}^E \|_{L^2(\Omega)}
  \\
  &\times \| \mathbf{v}_{bl}\|_{L^{\frac{2\gamma}{\gamma-1}(\Omega))}}
  \|\overline{\mathbf{u}}\|_{L^\infty(\Omega)} \Big)dt,
\end{align*}
and we pass to the limit, thanks to \eqref{v-prop}, recalling that $
\|\mathbf{v}_{bl}\|_{L^\infty(0,T;L^{\frac{2\gamma}{\gamma-1}}(\Omega))}
= O(\varepsilon^{\frac{\gamma-1}{2\gamma}})$, as $\varepsilon\to
0$. The second term can be bounded still using H\"older's inequality
along with \eqref{v-prop}, i.e.
  \begin{align*}
  \left| \int_0^T \int_\Omega \sqrt{\varrho_\varepsilon} \sqrt{\varrho_\varepsilon}\mathbf{u}^E \text{div}_x
    \mathbf{v}_{bl} \,\overline{\mathbf{u}} dxdt\right| \leq
  \int_0^T\Big(\|\sqrt{\varrho_\varepsilon}\|_{L^{2\gamma}(\Omega)} & \|\sqrt{\varrho_\varepsilon}\mathbf{u}^E \|_{L^2(\Omega)}
   \\
  &\times\|\text{div}_x \mathbf{v}_{bl}\|_{L^{\frac{2\gamma}{\gamma-1}(\Omega))}}
  \|\overline{\mathbf{u}}\|_{L^\infty(\Omega)} \Big)dt,
  \end{align*}
  and to pass to the limit we exploit the fact that $\|\text{div}_x \mathbf{v}_{bl}\|_{L^\infty(0,T;L^{\frac{2\gamma}{\gamma-1}}(\Omega))}
= O(\varepsilon^{\frac{\gamma-1}{2\gamma}})$, as $\varepsilon\to
0$. The other terms can be treated in a similar way. This fully justifies the passage to the limit performed 
in the last line of \eqref{passaggio-al-limite-R41}. 
%
% ------------
% Fine Luca 2022/04/27
% ------------
%

%
% ------------
% Luca 2022/04/29
% ------------
%
Let us now consider  $\widetilde{R_{5,2}}$. We have
\begin{align*}
  \int_0^T \int_\Omega \varrho_\varepsilon \nabla_x \mathbf{u}^E
  (\mathbf{u}_\varepsilon - \mathbf{u}^E) (\mathbf{u}^E - \mathbf{v}_{bl}
  - \mathbf{u}_\varepsilon) dxdt & = - \int_0^T \int_\Omega \varrho_\varepsilon \nabla_x
 \mathbf{u}^E (\mathbf{u}_\varepsilon - \mathbf{u}^E)^2 dxdt
  \\
  &\qquad  - \int_0^T \int_\Omega  \varrho_\varepsilon \nabla_x \mathbf{u}^E (\mathbf{u}_\varepsilon
    - \mathbf{u}^E) \mathbf{v}_{bl} dxdt,
\end{align*}
where the second term can be treated similarly to $\widetilde{R_{5,1}}$ and it goes to zero as $\varepsilon \to 0$. 

For the first term on the right-hand side, we get
\begin{equation*}
  \int_0^T \int_\Omega \varrho_\varepsilon \nabla_x \mathbf{u}^E (\mathbf{u}_\varepsilon - \mathbf{u}^E)^2
  dxdt = \int_0^T \int_\Omega  \left( \varrho_\varepsilon (\mathbf{u}_\varepsilon - \overline{\mathbf{u}})^2
    + \varrho_\varepsilon |\mathbf{v}_{bl}|^2-2\varrho_\varepsilon(\mathbf{u}_\varepsilon
    - \overline{\mathbf{u}}) \mathbf{v}_{bl}\right) dxdt,
\end{equation*}
and the last two terms go to zero as $\varepsilon \to 0$.

Moreover, we have
\begin{equation} \label{R422-again}
  \begin{aligned}
    \int_0^T \int_\Omega \varrho_\varepsilon (\mathbf{u}_\varepsilon - \overline{\mathbf{u}})^2
    dxdt & = \int_0^T \int_\Omega \varrho_\varepsilon (\mathbf{v}_\varepsilon - \overline{\mathbf{v}})^2
    dx dt + \int_0^T \int_\Omega \varrho_\varepsilon (\mathbf{w}_\varepsilon - \overline{\mathbf{w}})^2dxdt
    \\
    &\qquad  + 2 \int_0^T \int_\Omega  \varrho_\varepsilon (\mathbf{v}_\varepsilon - \overline{\mathbf{v}})
    (\overline{\mathbf{w}} - \mathbf{w}_\varepsilon) dxdt,
  \end{aligned}
\end{equation}
and for the first two addends in the right-hand side, we observe that
\begin{equation*}
  \int_0^T \int_\Omega \varrho_\varepsilon (\mathbf{v}_\varepsilon - \overline{\mathbf{v}})^2dxdt
  + \int_0^T \int_\Omega \varrho_\varepsilon (\mathbf{w}_\varepsilon - \overline{\mathbf{w}})^2dxdt
  \leq C \int_0^T \mathcal{E}(t,\cdot) dt,
\end{equation*}
while, for the last term on the right-hand side of \eqref{R422-again}, we have
\begingroup
\allowdisplaybreaks
\begin{align*}
  2\int_0^T& \int_\Omega  \varrho_\varepsilon  (\mathbf{v}_\varepsilon - \overline{\mathbf{v}})
  (\overline{\mathbf{w}} - \mathbf{w}_\varepsilon) dxdt
  = 2 \int_0^T \int_\Omega \big( \varrho_\varepsilon \mathbf{v}_\varepsilon \overline{\mathbf{w}}
  - \varrho_\varepsilon \mathbf{v}_\varepsilon \mathbf{w}_\varepsilon - \varrho_\varepsilon \overline{\mathbf{v}} \, 
  \overline{\mathbf{w}} + \varrho_\varepsilon \overline{\mathbf{v}} \, \mathbf{w}_\varepsilon \big) dxdt
\\
&  = 2 \int_0^T \int_\Omega \big[\varrho_\varepsilon (\mathbf{u}_\varepsilon + \varepsilon \nabla_x \log \varrho_\varepsilon)
     \varepsilon\nabla_x \log \varrho^E - \varrho_\varepsilon (\mathbf{u}_\varepsilon+\varepsilon\nabla_x
     \log \varrho_\varepsilon) \varepsilon \nabla \log \varrho_\varepsilon \big]dxdt
\\
&\quad  - 2 \int_0^T \int_\Omega \big[\varrho_\varepsilon (\overline{\mathbf{u}}+\varepsilon\nabla_x \log \varrho^E)
  \varepsilon \nabla_x \log \varrho^E - \varrho_\varepsilon (\overline{\mathbf{u}}+\varepsilon\nabla_x \log \varrho^E)
  \varepsilon \nabla_x \log \varrho_\varepsilon \big]dxdt
\\
&  = 2 \varepsilon \int_0^T \int_\Omega \varrho_\varepsilon \mathbf{u}_\varepsilon \nabla_x \log \varrho^E  dxdt
  + 2 \varepsilon^2 \int_0^T \int_\Omega \varrho_\varepsilon \nabla_x \log \varrho_\varepsilon \nabla_x \log \varrho^E  dxdt
\\
&\quad  - 2 \varepsilon \int_0^T \int_\Omega \varrho_\varepsilon \mathbf{u}_\varepsilon \nabla_x \log \varrho_\varepsilon  dxdt
  - 2 \varepsilon^2 \int_0^T \int_\Omega  \varrho_\varepsilon |\nabla_x \log \varrho_\varepsilon|^2 dxdt
  \\
 & \quad - 2 \varepsilon \int_0^T \int_\Omega \varrho_\varepsilon \overline{\mathbf{u}} \nabla_x \log \varrho^E dxdt
   -2 \varepsilon^2 \int_0^T \int_\Omega \varrho_\varepsilon |\nabla_x \log \varrho^E|^2 dxdt
  \\
& \quad  + 2 \varepsilon \int_0^T \int_\Omega \varrho_\varepsilon \overline{\mathbf{u}} \nabla_x \log \varrho_\varepsilon dxdt
       + 2 \varepsilon^2 \int_0^T \int_\Omega \varrho_\varepsilon \nabla_x \log \varrho^E \nabla_x \log \varrho_\varepsilon  dxdt,
\end{align*}
\endgroup
and hence
\begingroup
\allowdisplaybreaks
\begin{align*}
    2 &\int_0^T\int_\Omega  \varrho_\varepsilon  (\mathbf{v}_\varepsilon - \overline{\mathbf{v}})
  (\overline{\mathbf{w}} - \mathbf{w}_\varepsilon) dxdt\\
&  = \bigg[ 2\varepsilon \int_0^T \int_\Omega \sqrt{\varrho_\varepsilon} \sqrt{\varrho_\varepsilon} \mathbf{u}_\varepsilon 
  \nabla_x \log \varrho^E  dxdt + 2 \varepsilon^2 \int_0^T \int_\Omega \nabla_x \sqrt{\varrho_\varepsilon} \sqrt{\varrho_\varepsilon}
\nabla_x \log \varrho^E dxdt
\\
 &\quad    - 2 \varepsilon \int_0^T \int_\Omega \sqrt{\varrho_\varepsilon} \mathbf{u}_\varepsilon \nabla_x \sqrt{\varrho_\varepsilon} dxdt
   - 8 \varepsilon^2 \int_0^T \int_\Omega |\nabla_x \sqrt{\varrho_\varepsilon}|^2 dxdt
\\
&\quad    - 2 \varepsilon \int_0^T \int_\Omega \varrho_\varepsilon \overline{\mathbf{u}} \nabla_x \log \varrho^E
    -2 \varepsilon^2 dxdt \int_0^T \int_\Omega \varrho_\varepsilon |\nabla_x \log \varrho^E|^2 dxdt
\\
&\quad    + 2 \varepsilon \int_0^T \int_\Omega \sqrt{\varrho_\varepsilon} \overline{\mathbf{u}} \nabla_x \sqrt{\varrho_\varepsilon}
    + 2 \varepsilon^2 dxdt \int_0^T \int_\Omega \nabla_x \log \varrho^E \nabla_x  \sqrt{\varrho_\varepsilon} \sqrt{\varrho_\varepsilon} dxdt
    \bigg]
  \longrightarrow 0 \,\, \text{ as } \,\, \varepsilon \to 0.
\end{align*}
\endgroup
%
% ----------------------------------------
%Fine  Luca 2022/04/29 
% ----------------------------------------
%

%
% ----------------------------------------
% Luca 2022/05/01 
% ----------------------------------------
%
Next we turn to $\widetilde{R_{5,3}}$, to get
\begin{equation} \label{R43}
  \begin{aligned}
\int_0^T \int_\Omega \varrho_\varepsilon \mathbf{u}_\varepsilon \nabla_x \mathbf{v}_{bl}
  (\mathbf{u}^E  - \mathbf{v}_{bl} -  \mathbf{u}_\varepsilon) dxdt
  & = \int_0^T \int_\Omega \varrho_\varepsilon \mathbf{u}_\varepsilon \nabla_x \mathbf{v}_{bl}
   \overline{\mathbf{u}} dxdt
   \\
 &\quad   - \int_0^T \int_\Omega \varrho_\varepsilon \mathbf{u}_\varepsilon
  \nabla_x \mathbf{v}_{bl} \mathbf{u}_\varepsilon dxdt.
\end{aligned}
\end{equation}
For the first term on the right-hand side, we have
\begin{equation} \label{ancora-pezzi-intermedi}
\hspace{-0.3 cm}\begin{aligned} 
  \int_0^T\!\! \int_\Omega \varrho_\varepsilon \mathbf{u}_\varepsilon
  \nabla_x \mathbf{v}_{bl} \overline{\mathbf{u}} dxdt
  & =
  \int_0^T\!\! \int_\Omega \varrho_\varepsilon \mathbf{u}_\varepsilon
  \nabla_x (z \mathbf{u}^E) \overline{\mathbf{u}} dxdt
\\
&  = \int_0^T\!\! \int_\Omega \varrho_\varepsilon \mathbf{u}_\varepsilon
  z \nabla_x \mathbf{u}^E \overline{\mathbf{u}} dxdt
  +
  \int_0^T\!\! \int_\Omega \varrho_\varepsilon \mathbf{u}_\varepsilon
  \cdot \nabla_x z\otimes \mathbf{u}^E \overline{\mathbf{u}}\, dxdt
\\
 & = \bigg[\int_0^T\!\! \int_\Omega \sqrt{\varrho_\varepsilon}
  \sqrt{\varrho_\varepsilon} \mathbf{u}_\varepsilon z \nabla_x \mathbf{u}^E
   \overline{\mathbf{u}} dxdt
   \\
 & \quad +
   \int_0^T\!\! \int_\Omega \varrho_\varepsilon
   \mathbf{u}_\varepsilon  \cdot \frac{1}{d_\Omega(x)}\widetilde{z}  n\otimes
  \mathbf{u}^E \overline{\mathbf{u}} 
  dxdt \bigg] \longrightarrow 0  \,\, \text{ as } \,\, \varepsilon \to 0.
\end{aligned}
\end{equation}
  Here, the first term on the right-hand side of \eqref{ancora-pezzi-intermedi} can be easily
  bounded as done \eqref{passaggio-al-limite-R41}, i.e.
  \begin{align*}
 \left| \int_0^T \int_\Omega z \sqrt{\varrho_\varepsilon} \sqrt{\varrho_\varepsilon} \mathbf{u}_\varepsilon  \nabla_x \mathbf{u}^E
   \overline{\mathbf{u}} dxdt\right|
 \leq & \int_0^T \Big(\|\sqrt{\varrho_\varepsilon}\|_{L^{2\gamma}(\Omega)} \|\sqrt{\varrho_\varepsilon}\,
 \mathbf{u}_\varepsilon\|_{L^2(\Omega)} \| z \|_{L^{\frac{2\gamma}{\gamma-1}(\Omega))}}\\
 &\qquad \times \|\overline{\mathbf{u}}\|_{L^\infty(\Omega)}\|\nabla_x \mathbf{u}^E\|_{L^\infty(\Omega)} \Big)dt,
\end{align*}
where we used \eqref{reg-prop} and, in particular, relations \eqref{v-prop}  for passing to the limit. The second term, instead,
can be controlled as follows
\begin{align*}
  \bigg|\int_0^T \int_\Omega \varrho_\varepsilon \mathbf{u}_\varepsilon \cdot \frac{1}{d_\Omega(x)}\widetilde{z}  n\otimes
  \mathbf{u}^E \overline{\mathbf{u}}dxdt\bigg|
  \leq \hat C\int_0^T\|\sqrt{\rho_\varepsilon}\|_{L^{2\gamma}(\Omega)}
  \left\|\frac{\sqrt{\varrho_\varepsilon}\, \mathbf{u}_\varepsilon}{d_\Omega}\right\|_{L^2(\Omega)} \| \tilde z \|_{L^\infty(\Omega)}
  \|1\|_{L^{\frac{2\gamma}{\gamma-1}(\Gamma_\varepsilon))}} dt 
\end{align*}
with $\hat C = \hat C (\|\mathbf{u}^E\|_{L^\infty([0, T]\times \Omega)}, \|\bar{\mathbf{u}}\|_{L^\infty([0, T]\times \Omega)})$,
under the assumption (\ref{cond-conv})$_2$
%. \begin{equation} \label{extra-cond-Bresch-like}
  %. \varepsilon^{\frac{\gamma}{\gamma-1}} \int_0^T \int_{\Gamma_\varepsilon} \frac{\varrho_\varepsilon|\mathbf{u}_\varepsilon|^2}{d_\Omega^2(x)}dxdt
  %. \to 0 \,\, \textrm{ as } \,\, \varepsilon \to 0,
%. \end{equation}
and exploiting that $\textrm{supp}\,  \mathbf{v}_{bl}\subseteq [0,T] \times \Gamma_\varepsilon$.

For the second term on the right-hand side of \eqref{R43}, we have
\begin{align*}
  \int_0^T \int_\Omega \varrho_\varepsilon \mathbf{u}_\varepsilon \nabla_x \mathbf{v}_{bl} \mathbf{u}_\varepsilon dxdt
 & = \int_0^T \int_\Omega \frac{\sqrt{\varrho_\varepsilon} \mathbf{u}_\varepsilon}{d_\Omega(x)} \frac{\sqrt{\varrho_\varepsilon}
  \mathbf{u}_\varepsilon}{d_\Omega(x)} d^2_\Omega(x) \nabla_x \mathbf{v}_{bl}
  \\
  & \leq \| d^2_\Omega(x) \nabla_x \mathbf{v}_{bl}\|_{L^\infty([0, T]\times \Omega)}
    \int_0^T \int_{\Gamma_\varepsilon} \frac{\varrho_\varepsilon|\mathbf{u}_\varepsilon|^2}{d_\Omega^2(x)}dxdt
    \longrightarrow  0 \,\, \textrm{ as } \,\, \varepsilon \to 0,
\end{align*}
still using that the support of $\mathbf{v}_{bl}$ in contained in $[0,T] \times \Gamma_\varepsilon$, and thanks to the fact that
\begin{equation} \label{cond-conv-cons}
  \varepsilon \int_0^T \int_{\Gamma_\varepsilon} \frac{\varrho_\varepsilon|\mathbf{u}_\varepsilon|^2}{d_\Omega^2(x)}dxdt
  \to 0 \,\, \textrm{ as } \,\, \varepsilon \to 0,
\end{equation}
which is a direct consequence of (\ref{cond-conv})$_2$.

Finally, for $\widetilde{R_{5,4}}$, we have
\begin{equation*} 
\begin{aligned}
  \int_0^T \int_\Omega   \varrho_\varepsilon \nabla_x \mathbf{v}_{bl} & (\mathbf{v}_{bl} - \mathbf{u}^E)
  (\mathbf{u}^E - \mathbf{v}_{bl} - \mathbf{u}_\varepsilon) dxdt
  \\
  &= - \int_0^T \int_\Omega \varrho_\varepsilon \nabla_x \mathbf{v}_{bl}
    \overline{\mathbf{u}} (\overline{\mathbf{u}} - \mathbf{u}_\varepsilon)dxdt
  \\
  &  = - \int_0^T \int_\Omega \varrho_\varepsilon \nabla_x \mathbf{v}_{bl} \overline{\mathbf{u}}\, \overline{\mathbf{u}} dxdt
  \\
  &\quad  + \int_0^T \int_\Omega \varrho_\varepsilon \nabla_x \mathbf{v}_{bl}
    \overline{\mathbf{u}} \mathbf{u}_\varepsilon  dxdt,
  \end{aligned}
\end{equation*}
where for the first term we have
\begin{align*}
    \int_0^T \int_\Omega \varrho_\varepsilon \nabla_x \mathbf{v}_{bl}
    \overline{\mathbf{u}} \ \overline{\mathbf{u}} dxdt
    &\leq   \|  \overline{\mathbf{u}} \|_{L^\infty([0,T];L^\infty(\Omega))}^2
    \int_0^T \| \varrho_\varepsilon \|_{L^\gamma(\Omega)} \| \nabla_x
    \mathbf{v}_{bl}  \|_{L^\infty(\Omega)} \| 1 \|_{L^{\frac{\gamma}{\gamma-1}}(\Gamma_\varepsilon)}   dt
\\
   & \leq  C \varepsilon^{-\frac{1}{\gamma}} \| \varrho_\varepsilon \|_{L^\gamma([0,T];L^\gamma(\Omega))}
    \to 0 \, \, \textrm{ as } \, \,  \varepsilon \to 0 
\end{align*}
under the assumptions (\ref{cond-conv})$_1$, while the second term can be handle similarly
to $\widetilde{R_{5,3}}$ and goes to zero as $\varepsilon \to 0$.

Finally, we consider the second and third term in \eqref{r4}. We have,
\begin{align*}
\int_0^T \int_\Omega &\varrho_\varepsilon (\overline{\mathbf{u}}-\mathbf{u}_\varepsilon)
\nabla_x \overline{\mathbf{u}} \ \mathbf{v}_{bl} dxdt - \int_0^T \int_\Omega \varrho_\varepsilon
(\overline{\mathbf{u}}-\mathbf{u}_\varepsilon) \mathbf{u}^E \nabla_x \mathbf{v}_{bl}dxdt
\\
&=
\int_0^T \int_\Omega \varrho_\varepsilon (\overline{\mathbf{u}}-\mathbf{u}_\varepsilon) 
\nabla_x \mathbf{u}^E \mathbf{v}_{bl} dxdt
- \int_0^T \int_\Omega \varrho_\varepsilon (\overline{\mathbf{u}}-\mathbf{u}_\varepsilon)
\nabla_x \mathbf{v}_{bl} \mathbf{v}_{bl} dxdt
     \\
&\quad -\int_0^T \int_\Omega \varrho_\varepsilon \overline{\mathbf{u}}
\ \mathbf{u}^E \nabla_x \mathbf{v}_{bl} dxdt +\int_0^T \int_\Omega \varrho_\varepsilon
\mathbf{u}_\varepsilon \mathbf{u}^E \nabla_x \mathbf{v}_{bl} dxdt,
\end{align*}
where the first term is the same as in $\widetilde{R_{5,1}}$
while the other terms could be handled with similar argument as in
$\widetilde{R_{5,4}}$.
In conclusions, we have $R_5 \to 0$ as $\varepsilon \to 0$.

For $R_6$, we have
\begin{align*}
   \varepsilon \int_0^T \int_\Omega & \varrho_\varepsilon \nabla_x \overline{\mathbf{u}}
   \cdot \mathbf{u}_\varepsilon \cdot  (\nabla_x \log \varrho^E
  - \nabla_x \log \varrho_\varepsilon)dxdt
  \\
  &=  \varepsilon \int_0^T \int_\Omega \varrho_\varepsilon \nabla_x 
  (\mathbf{u}^E - \mathbf{v}_{bl}) \cdot \mathbf{u}_\varepsilon
  \cdot (\nabla_x \log \varrho^E - \nabla_x \log \varrho_\varepsilon) dxdt
\\
&  = \varepsilon \int_0^T \int_\Omega \varrho_{\varepsilon} \nabla_x 
  \mathbf{u}^E \cdot \mathbf{u}_\varepsilon
     \cdot   (\nabla_x \log \varrho^E - \nabla_x \log \varrho_\varepsilon) dxdt
  \\
&\quad   - \varepsilon \int_0^T \int_\Omega \varrho_\varepsilon \nabla_x 
 \mathbf{v}_{bl} \cdot \mathbf{u}_\varepsilon \cdot (\nabla_x \log \varrho^E
 - \nabla_x \log \varrho_\varepsilon)  dxdt
\\
&  = \varepsilon \int_0^T \int_\Omega \varrho_\varepsilon \nabla_x 
  \mathbf{u}^E  \cdot \mathbf{u}_\varepsilon \cdot \nabla_x \log \varrho^E
  dxdt  -  \varepsilon \int_0^T \int_\Omega \varrho_\varepsilon \nabla_x 
 \mathbf{u}^E  \cdot \mathbf{u}_\varepsilon  \cdot \nabla_x \log \varrho_\varepsilon  dxdt
\\
&\quad   - \varepsilon \int_0^T \int_\Omega \varrho_\varepsilon \nabla_x 
  \mathbf{v}_{bl} \cdot \mathbf{u}_\varepsilon  \cdot \nabla_x \log \varrho^E 
  dxdt  + \varepsilon \int_0^T \int_\Omega \varrho_\varepsilon \nabla_x 
  \mathbf{v}_{bl} \cdot \mathbf{u}_\varepsilon \cdot \nabla_x \log \varrho_\varepsilon  dxdt
\end{align*}
and so
\begin{equation} \label{conto-utile}
\hspace{-0.35 cm}\begin{aligned}
   & \varepsilon  \! \int_0^T\!\! \int_\Omega\! \varrho_\varepsilon \nabla_x
    \overline{\mathbf{u}} \cdot \mathbf{u}_\varepsilon \cdot (\nabla_x
    \log \varrho^E - \nabla_x \log \varrho_\varepsilon)dxdt
    \\
    & = \varepsilon \int_0^T \!\!\int_\Omega\! \sqrt{\varrho_\varepsilon}
    \sqrt{\varrho_\varepsilon} \mathbf{u}_\varepsilon \cdot \nabla_x
    \mathbf{u}^E \cdot \nabla_x \log \varrho^E dxdt - \varepsilon
    \int_0^T\!\!\int_\Omega\! \nabla_x \sqrt{\varrho_\varepsilon } \cdot
    \sqrt{\varrho_\varepsilon } \mathbf{u}_\varepsilon \cdot \nabla_x
    \mathbf{u}^E dxdt
    \\
    &\quad + \varepsilon \int_0^T\!\! \int_\Omega\! \nabla_x
    \varrho_\varepsilon \cdot \mathbf{v}_{bl} \cdot
    \mathbf{u}_\varepsilon \cdot \nabla_x \log \varrho^E dxdt +
    \varepsilon \int_0^T \int_\Omega \varrho_\varepsilon
    \mathbf{v}_{bl} \cdot \nabla_x \mathbf{u}_\varepsilon \cdot
    \nabla_x \log \varrho^E dxdt
    \\
    &\quad + \varepsilon \int_0^T \int_\Omega \varrho_\varepsilon
    \mathbf{v}_{bl} \cdot \mathbf{u}_\varepsilon \cdot \nabla_x
    \nabla_x \log \varrho^E dxdt + \varepsilon \int_0^T \int_\Omega
    \varrho_\varepsilon \nabla_x \mathbf{v}_{bl} \cdot
    \mathbf{u}_\varepsilon \cdot \nabla_x \log \varrho_\varepsilon dxdt
      \end{aligned}
    \end{equation}
    \begin{equation*}
     = \left[ \varepsilon \int_0^T \int_\Omega
      \sqrt{\varrho_\varepsilon} \sqrt{\varrho_\varepsilon}
      \mathbf{u}_\varepsilon \cdot \nabla_x \mathbf{u}^E \cdot
      \nabla_x \log \varrho^E dxdt - \varepsilon \int_0^T \int_\Omega
      \nabla_x \sqrt{\varrho_\varepsilon } \cdot
      \sqrt{\varrho_\varepsilon } \mathbf{u}_\varepsilon \cdot
      \nabla_x \mathbf{u}^E dxdt \right.
  \end{equation*}
\begin{equation*}
  \begin{aligned}
    &\qquad\,\, + \varepsilon \int_0^T \int_\Omega \nabla_x
    \sqrt{\varrho_\varepsilon} \sqrt{\varrho_\varepsilon}
    \mathbf{u}_\varepsilon \cdot \mathbf{v}_{bl} \cdot \nabla_x \log
    \varrho^E dxdt + \varepsilon \int_0^T \int_\Omega
    \sqrt{\varrho_\varepsilon} \mathbf{v}_{bl} \cdot
    \sqrt{\varrho_\varepsilon} \nabla_x
      \mathbf{u}_\varepsilon  \cdot \nabla_x \log \varrho^E dxdt
    \\
    & \qquad\,\, + \varepsilon \int_0^T \int_\Omega
    \sqrt{\varrho_\varepsilon} \sqrt{\varrho_\varepsilon}
    \mathbf{u}_\varepsilon \cdot \mathbf{v}_{bl} \cdot \nabla_x
    \nabla_x \log \varrho^E dxdt
 \left.  + \varepsilon \int_0^T
     \int_\Omega \varrho_\varepsilon \nabla_x \mathbf{v}_{bl} \cdot
     \mathbf{u}_\varepsilon \cdot \nabla_x \log \varrho_\varepsilon
      dxdt \right] \underset{\!\!\!\! \varepsilon \to 0}{-\hspace{-0.2 cm}\longrightarrow 0}.
  \end{aligned}
\end{equation*}
Observe that, the term with integrating function
$f(x, t)=\sqrt{\varrho_\varepsilon} \mathbf{v}_{bl} \cdot
\sqrt{\varrho_\varepsilon} \nabla_x
\mathbf{u}_\varepsilon  \cdot \nabla_x \log \varrho^E$
 can be rewritten in terms of those already present in
 brackets. Indeed, integrating by parts, we have
 \begin{equation*}
   \begin{aligned}
     \varepsilon \int_0^T &\int_\Omega \sqrt{\varrho_\varepsilon}
     \mathbf{v}_{bl} \cdot \sqrt{\varrho_\varepsilon} \nabla_x
     \mathbf{u}_\varepsilon \cdot \nabla_x \log \varrho^E dxdt
     \\
     & = -2 \varepsilon \int_0^T \int_\Omega \nabla_x
     \sqrt{\varrho_\varepsilon} \mathbf{v}_{bl} \cdot
     \sqrt{\varrho_\varepsilon} \mathbf{u}_\varepsilon \cdot \nabla_x
     \log \varrho^E dxdt - \varepsilon \int_0^T \int_\Omega
     \sqrt{\varrho_\varepsilon} \nabla_x\mathbf{v}_{bl} \cdot
     \sqrt{\varrho_\varepsilon} \mathbf{u}_\varepsilon \cdot \nabla_x
     \log \varrho^E dxdt
     \\
     &\qquad - \varepsilon \int_0^T \int_\Omega
     \sqrt{\varrho_\varepsilon} \mathbf{v}_{bl} \cdot
     \sqrt{\varrho_\varepsilon} \mathbf{u}_\varepsilon \cdot
     \nabla_x\nabla_x \log \varrho^E dxdt.
   \end{aligned}
 \end{equation*}
 
 In particular, for the last term on the right-hand side of
 \eqref{conto-utile}, we have that
 \begin{equation*}
   \varepsilon \int_0^T \int_\Omega \varrho_\varepsilon \nabla_x 
   \mathbf{v}_{bl} \cdot \mathbf{u}_\varepsilon \cdot \nabla_x \log \varrho_\varepsilon
   dxdt \leq \varepsilon \int_0^T \int_\Omega \varepsilon \nabla_x \mathbf{v}_{bl}
   \cdot \frac{\sqrt{\varrho_\varepsilon} \mathbf{u}_\varepsilon}{d_\Omega(x)}
   \cdot \nabla_x \sqrt{\varrho_\varepsilon} dxdt \to 0 \,\, \textrm{ as } \,\, \varepsilon \to 0
 \end{equation*}
thanks to (\ref{cond-conv-cons}).

For $R_7$, we have
\begin{equation*}
  \varepsilon \int_0^T \int_\Omega \varrho_\varepsilon \nabla_x \nabla_x \log \varrho^E \cdot
\mathbf{u}_\varepsilon \cdot (\overline{\mathbf{u}} -
\mathbf{u}_\varepsilon)dxdt
\end{equation*}
\begin{equation*}
  = \varepsilon \int_0^T \int_\Omega
  \varrho_\varepsilon \nabla_x \nabla_x \log \varrho^E \cdot \mathbf{u}_\varepsilon
  \cdot
  (\mathbf{u}^E - \mathbf{v}_{bl} - \mathbf{u}_\varepsilon)
  dxdt
\end{equation*}
\begin{equation*}
  = \left[ \varepsilon \int_0^T    \int_\Omega
    \sqrt{\varrho_\varepsilon}
    \sqrt{\varrho_\varepsilon}
    \mathbf{u}_\varepsilon
    \cdot
    \nabla_x \nabla_x \log \varrho^E 
    \cdot
    \mathbf{u}^E
    dxdt
    - \varepsilon \int_0^T \int_\Omega
    \sqrt{\varrho_\varepsilon}
    \sqrt{\varrho_\varepsilon}
    \mathbf{u}_\varepsilon
    \cdot
    \nabla_x \nabla_x \log \varrho^E 
    \cdot
    \mathbf{v}_{bl}
    dxdt
  \right.
\end{equation*}
\begin{equation*}
  \left. - \varepsilon \int_0^T \int_\Omega
    \sqrt{\varrho_\varepsilon}
    \mathbf{u}_\varepsilon
    \cdot
    \sqrt{\varrho_\varepsilon}
    \mathbf{u}_\varepsilon
    \cdot
    \nabla_x \nabla_x \log \varrho^E
    dxdt
  \right]
  \longrightarrow 0 
  \ \ \ \ \text{as} 
  \ \ \ \ \varepsilon \to 0.
\end{equation*}

For $R_8$, we have
\begin{equation*}
  2\varepsilon^2 \int_0^T \int_\Omega
  \varrho_\varepsilon \nabla_x \nabla_x \log \varrho^E \cdot
  \mathbf{u}_\varepsilon \cdot (\nabla_x \log \varrho^E - \nabla_x \log
  \varrho_\varepsilon)dxdt
\end{equation*}
\begin{equation*}
  = \left[ 2\varepsilon^2 \int_0^T \int_\Omega
    \sqrt{\varrho_\varepsilon}
    \sqrt{\varrho_\varepsilon }
    \mathbf{u}_\varepsilon 
    \cdot
    \nabla_x \nabla_x \log \varrho^E \cdot
    \nabla_x \log \varrho^E
    dxdt
    \right.
\end{equation*}    
\begin{equation*}
    \left.
    -
    2\varepsilon^2 \int_0^T \int_\Omega
    \nabla_x \sqrt{\varrho_\varepsilon}
    \sqrt{\varrho_\varepsilon }
    \mathbf{u}_\varepsilon 
    \cdot
    \nabla_x \nabla_x \log \varrho^E
    dxdt
  \right]
  \longrightarrow 0 
  \ \ \ \ \text{as} 
  \ \ \ \ \varepsilon \to 0.
\end{equation*}

\subsubsection{Viscous and pressure terms} \label{subsec:viscous}
For $R_9$, we have
\begin{align*}
\varepsilon\int_0^T &\int_{\Omega} \varrho_\varepsilon \mathbb{D}(\mathbf{v}_\varepsilon) : \nabla_{x} \overline{\mathbf{v}}dxdt
+\varepsilon\int_0^T \int_{\Omega} \varrho_\varepsilon \mathbb{A}(\mathbf{v}_\varepsilon) : \nabla_{x} \overline{\mathbf{v}} dxdt
- \varepsilon\int_0^T \int_{\Omega}
\varrho_\varepsilon \nabla_x \mathbf{w}_\varepsilon: \nabla_{x} \overline{\mathbf{v}} dxdt
\\
&\qquad +\varepsilon\int_0^T \int_{\Omega} \varrho_\varepsilon \nabla_x^{\top} \mathbf{u}_\varepsilon : \nabla_{x} \overline{\mathbf{w}} dxdt
-2\varepsilon \int_0^T \int_\Omega \left( \sqrt{\varrho_\varepsilon} \mathcal{S}(\mathbf{u}_\varepsilon) \mathbb{D}(\overline{\mathbf{u}})
+ \sqrt{\varrho_\varepsilon} \mathcal{A}(\mathbf{u}_\varepsilon) \mathbb{A}(\overline{\mathbf{u}}) \right) dxdt
\\
&\qquad  +\varepsilon \int_0^T \int_\Omega \left(\varrho_\varepsilon \left( |\mathbb{D}(\overline{\mathbf{u}})|^2 + |\mathbb{A}(\overline{\mathbf{u}})|^2
  \right) \right) dxdt.
\\
& 
= \left[
    -\varepsilon \int_0^T \int_{\Omega}
    \sqrt{\varrho_\varepsilon}\sqrt{\varrho_\varepsilon}(\mathbf{v}_\varepsilon)_j \partial_{ii} (\overline{\mathbf{v}})_j
    - 2\varepsilon \int_0^T \int_{\Omega}
    \sqrt{\varrho_\varepsilon}(\mathbf{v}_\varepsilon)_j \partial_i \sqrt{\varrho_\varepsilon} \partial_i (\overline{\mathbf{v}})_j
\right]
\\
&\qquad
\left\{
    - \varepsilon \int_0^T \int_{\Omega}
    \sqrt{\varrho_\varepsilon}\sqrt{\varrho_\varepsilon}(\mathbf{v}_\varepsilon)_i \partial_{ji} (\overline{\mathbf{v}})_j
    - 2\varepsilon \int_0^T \int_{\Omega}
    \sqrt{\varrho_\varepsilon}(\mathbf{v}_\varepsilon)_i \partial_j \sqrt{\varrho_\varepsilon} \partial_i (\overline{\mathbf{v}})_j
\right\}
  \\
  &\qquad 
\left[
    -\varepsilon \int_0^T \int_{\Omega}
    \sqrt{\varrho_\varepsilon}\sqrt{\varrho_\varepsilon}(\mathbf{v}_\varepsilon)_j \partial_{ii} (\overline{\mathbf{v}})_j
    - 2\varepsilon \int_0^T \int_{\Omega}
    \sqrt{\varrho_\varepsilon}(\mathbf{v}_\varepsilon)_j \partial_i \sqrt{\varrho_\varepsilon} \partial_i (\overline{\mathbf{v}})_j
\right]
\\
&\qquad 
\left\{
    + \varepsilon \int_0^T \int_{\Omega}
    \sqrt{\varrho_\varepsilon}\sqrt{\varrho_\varepsilon}(\mathbf{v}_\varepsilon)_i \partial_{ji} (\overline{\mathbf{v}})_j
    + 2\varepsilon \int_0^T \int_{\Omega}
    \sqrt{\varrho_\varepsilon}(\mathbf{v}_\varepsilon)_i \partial_j \sqrt{\varrho_\varepsilon} \partial_i (\overline{\mathbf{v}})_j
\right\}
\\
&\qquad    - \varepsilon \int_0^T \int_{\Omega}
    \sqrt{\varrho_\varepsilon}\sqrt{\varrho_\varepsilon}(\mathbf{w}_\varepsilon)_j \partial_{ii} (\overline{\mathbf{v}})_j
    - 2\varepsilon \int_0^T \int_{\Omega}
    \sqrt{\varrho_\varepsilon}(\mathbf{w}_\varepsilon)_j \partial_i \sqrt{\varrho_\varepsilon} \partial_i (\overline{\mathbf{v}})_j
\\
&\qquad    - \varepsilon \int_0^T \int_{\Omega}
    \sqrt{\varrho_\varepsilon}\sqrt{\varrho_\varepsilon}(\mathbf{u}_\varepsilon)_i \partial_{ji} (\overline{\mathbf{w}})_j
    - 2\varepsilon \int_0^T \int_{\Omega}
     \sqrt{\varrho_\varepsilon}(\mathbf{u}_\varepsilon)_i \partial_j \sqrt{\varrho_\varepsilon} \partial_i (\overline{\mathbf{w}})_j
\\
& \qquad -2\varepsilon \int_0^T \int_\Omega \left( \sqrt{\varrho_\varepsilon} \mathcal{S}(\mathbf{u}_\varepsilon)
\mathbb{D}(\overline{\mathbf{u}}) + \sqrt{\varrho_\varepsilon} \mathcal{A}(\mathbf{u}_\varepsilon) \mathbb{A}(\overline{\mathbf{u}})
\right) dxdt
\\
&\qquad +\varepsilon \int_0^T \int_\Omega \left( \varrho_\varepsilon \left( |\mathbb{D}(\overline{\mathbf{u}})|^2
+ |\mathbb{A}(\overline{\mathbf{u}})|^2 \right) \right) dxdt
\\
&\qquad \qquad
\end{align*}
where the sum of the terms in $\{...\}$ is zero, and we add the term in $[...]$. Hence,
\begingroup
\allowdisplaybreaks
\begin{align*}
\varepsilon\int_0^T &\int_{\Omega} \varrho_\varepsilon \mathbb{D}(\mathbf{v}_\varepsilon) : \nabla_{x} \overline{\mathbf{v}}dxdt
+\varepsilon\int_0^T \int_{\Omega} \varrho_\varepsilon \mathbb{A}(\mathbf{v}_\varepsilon) : \nabla_{x} \overline{\mathbf{v}} dxdt
- \varepsilon\int_0^T \int_{\Omega} \varrho_\varepsilon \nabla_x \mathbf{w}_\varepsilon: \nabla_{x} \overline{\mathbf{v}} dxdt
\\
&\qquad +\varepsilon\int_0^T \int_{\Omega} \varrho_\varepsilon \nabla_x^{\top} \mathbf{u}_\varepsilon : \nabla_{x} \overline{\mathbf{w}} dxdt
-2\varepsilon \int_0^T \int_\Omega \left( \sqrt{\varrho_\varepsilon} \mathcal{S}(\mathbf{u}_\varepsilon) \mathbb{D}(\overline{\mathbf{u}})
+ \sqrt{\varrho_\varepsilon} \mathcal{A}(\mathbf{u}_\varepsilon) \mathbb{A}(\overline{\mathbf{u}}) \right) dxdt
\\
 &\qquad  +\varepsilon \int_0^T \int_\Omega \left(\varrho_\varepsilon \left( |\mathbb{D}(\overline{\mathbf{u}})|^2
 + |\mathbb{A}(\overline{\mathbf{u}})|^2 \right) \right) dxdt
\\
&  = -2\varepsilon \int_0^T \int_{\Omega} \sqrt{\varrho_\varepsilon}\sqrt{\varrho_\varepsilon}(\mathbf{v}_\varepsilon)_j \partial_{ii} (\mathbf{u}^E-\mathbf{v}_{bl}
     +\varepsilon\nabla_x \log \varrho^E)_jdxdt
     \\
&\qquad - 4\varepsilon \int_0^T \int_{\Omega} \sqrt{\varrho_\varepsilon}(\mathbf{v}_\varepsilon)_j \partial_i
     \sqrt{\varrho_\varepsilon} \partial_i (\mathbf{u}^E-\mathbf{v}_{bl}+\varepsilon\nabla_x \log \varrho^E)_jdxdt
\\     
& \qquad  - \varepsilon \int_0^T \int_{\Omega} \sqrt{\varrho_\varepsilon}\sqrt{\varrho_\varepsilon}(\mathbf{w}_\varepsilon)_j
  \partial_{ii} (\mathbf{u}^E-\mathbf{v}_{bl}+\varepsilon\nabla_x \log \varrho^E)_jdxdt
  \\
  &\qquad - 2\varepsilon \int_0^T \int_{\Omega} \sqrt{\varrho_\varepsilon}(\mathbf{w}_\varepsilon)_j \partial_i \sqrt{\varrho_\varepsilon}
    \partial_i (\mathbf{u}^E-\mathbf{v}_{bl}+\varepsilon\nabla_x \log \varrho^E)_jdxdt
\\
  &\qquad  - \varepsilon \int_0^T \int_{\Omega} \sqrt{\varrho_\varepsilon}\sqrt{\varrho_\varepsilon}(\mathbf{u}_\varepsilon)_i
    \partial_{ji} (\varepsilon \nabla_x \log \varrho^E)_jdxdt
    \\
 &\qquad  - 2\varepsilon \int_0^T \int_{\Omega} \sqrt{\varrho_\varepsilon}(\mathbf{u}_\varepsilon)_i \partial_j \sqrt{\varrho_\varepsilon}
     \partial_i (\varepsilon \nabla_x \log \varrho^E)_jdxdt
\\
& \qquad -2\varepsilon \int_0^T \int_\Omega \left( \sqrt{\varrho_\varepsilon} \mathcal{S}(\mathbf{u}_\varepsilon) 
\mathbb{D}(\mathbf{u}^E - \mathbf{v}_{bl})dxdt
+ \sqrt{\varrho_\varepsilon} \mathcal{A}(\mathbf{u}_\varepsilon) \mathbb{A}(\mathbf{u}^E - \mathbf{v}_{bl}) \right) dxdt
\\
&\qquad +\varepsilon \int_0^T \int_\Omega \left( \varrho_\varepsilon \left( |\mathbb{D}(\mathbf{u}^E - \mathbf{v}_{bl})|^2
+ |\mathbb{A}(\mathbf{u}^E - \mathbf{v}_{bl})|^2 \right) \right) dxdt \,\, \triangleq \sum_{i=1}^8 I_i.
\end{align*}
\endgroup
%$$
%\triangleq \sum_{i=1}^8 I_i.
%= I_1 + I_2 + I_3 + I_4 + I_5 + I_6 + I_7 + I_8.
%$$
The terms $I_1...I_4$ that do not contain the boundary layer velocity $\mathbf{v}_{bl}$ go easily to zero as 
$\varepsilon \to 0$. Consequently, we concentrate on the terms containing $\mathbf{v}_{bl}$. We consider $I_1$ and $I_2$, namely
\begin{equation} \label{first}
  \begin{aligned}
& I_1 =  2\varepsilon \int_0^T \int_{\Omega}
    \sqrt{\varrho_\varepsilon}\sqrt{\varrho_\varepsilon}(\mathbf{v}_\varepsilon)_j \partial_{ii}(\mathbf{v}_{bl})_j
    dxdt
\\
=  2\varepsilon \int_0^T \int_{\Omega} \sqrt{\varrho_\varepsilon}\sqrt{\varrho_\varepsilon}  (\mathbf{u}_\varepsilon)_j
&\partial_{ii}(\mathbf{v}_{bl})_j  dxdt  +  2\varepsilon \int_0^T \int_{\Omega}
    \sqrt{\varrho_\varepsilon}\sqrt{\varrho_\varepsilon}(\mathbf{w}_\varepsilon)_j \partial_{ii}(\mathbf{v}_{bl})_j
    dxdt
  \end{aligned}
\end{equation}
and
\begin{equation*}
I_2 =
    4\varepsilon \int_0^T \int_{\Omega}
    \sqrt{\varrho_\varepsilon}(\mathbf{v}_\varepsilon)_j \partial_i \sqrt{\varrho_\varepsilon} \partial_i (\mathbf{v}_{bl})_j
    dxdt
\end{equation*}
\begin{equation} \label{second}
    =
    4\varepsilon \int_0^T \int_{\Omega}
    \sqrt{\varrho_\varepsilon}(\mathbf{u}_\varepsilon)_j \partial_i \sqrt{\varrho_\varepsilon} \partial_i (\mathbf{v}_{bl})_j
    dxdt
    +
    4\varepsilon \int_0^T \int_{\Omega}
    \sqrt{\varrho_\varepsilon}(\mathbf{w}_\varepsilon)_j \partial_i \sqrt{\varrho_\varepsilon} \partial_i (\mathbf{v}_{bl})_j
    dxdt.
\end{equation}
Now, setting $\check{\xi} (r) = r^2\xi''(r) $ and $\check{z}(x)=\check{\xi}\big(\frac{d_{\Omega}}{\varepsilon}\big)$, we have
\begin{equation} \label{djdi-vbl}
  \begin{aligned}
    \D_{ii}\mathbf{v}_{bl, j} = \frac{1}{d_\Omega}\widetilde{z}n_i\D_i\mathbf{u}^E_j
    + z\D_i^2\mathbf{u}^E_j +\frac{1}{d_\Omega}\widetilde{z}\D_i(n_i\mathbf{u}^E_j)+\frac{1}{d^2_{\Omega}}\check z n_i^2\mathbf{u}^E_j
  \end{aligned}
\end{equation}
with $z$ and $\widetilde z$ defined in \eqref{xiz}. Consequently, noticing that
\begin{equation*}
  \frac{\widetilde{z}}{d_\Omega(r)}
  = \frac{\xi'(r)}{c\varepsilon}, \ \ \ \ \check z = \frac{\xi''(r)}{(c\varepsilon)^2},
\end{equation*}
we can rewrite (\ref{djdi-vbl}) as follows
\begin{equation*}
  \D_{ii}\mathbf{v}_{bl, j} = \frac{\xi'(r)}{c\varepsilon}
  n_i\D_i\mathbf{u}^E_j
  + z\D_i^2\mathbf{u}^E_j +\frac{\xi'(r)}{c\varepsilon}\D_i(n_i\mathbf{u}^E_j)+\frac{\xi''(r)}{(c\varepsilon)^2} n_i^2\mathbf{u}^E_j.
\end{equation*}
%
% ----------------------------------------
% Fine Luca 2022/05/02
% ----------------------------------------
% 

%
% ----------------------------------------
% Luca 2022/04/21
% ----------------------------------------
% 
The worst term in this relation is the last one which is of leading order with respect to $\varepsilon \to 0$.
Now, for $I_1$ and $I_2$, we focus and deal with such a term, then the remaining ones can be handled in a similar
way. 

  % For the first term on the right-hand-side of (\ref{first}),
  Exploiting H\"older's inequality we reach
  \begin{align*}
    2\varepsilon \int_0^T& \int_{\Omega} \sqrt{\varrho_\varepsilon} \sqrt{\varrho_\varepsilon}(\mathbf{v}_\varepsilon)_j
    \frac{\xi''(r)}{(c\varepsilon)^2} n_i^2\mathbf{u}^E_j dxdt
    \\
   & \leq  \varepsilon^2 C\int_0^T \int_{\Omega}
     \sqrt{\varrho_\varepsilon}
     \frac{\sqrt{\varrho_\varepsilon}(\mathbf{u}_\varepsilon)_j}
     {d_\Omega(x)} \frac{\xi''(r)}{(c\varepsilon)^2}
     n_i^2\mathbf{u}^E_j dxdt
    \\
   &\quad  +  2\varepsilon^2 \int_0^T \int_{\Omega}
     \sqrt{\varrho_\varepsilon} \sqrt{\varrho_\varepsilon}(\nabla_x\log \varrho_\varepsilon)_j
     \frac{\xi''(r)}{(c\varepsilon)^2}
     n_i^2\mathbf{u}^E_j dxdt
    \\
    &\leq \varepsilon^2C\int_0^T \| \sqrt{\varrho_\varepsilon} \|_{L^{2\gamma}(\Omega)}
    \left\| \frac{\sqrt{\varrho_\varepsilon}(\mathbf{u}_\varepsilon)_j}{d_\Omega(x)}
    \right\|_{L^2(\Omega)}  \left\| \frac{\xi''(r)}{(c\varepsilon)^2} n_i^2\mathbf{u}^E_j
    \right\|_{L^\infty(\Omega)}  \|  1  \|_{L^q(\Gamma_\varepsilon)} dt
    \\
&\quad +  \frac{2}{c} \int_0^T \int_{\Omega}
     \sqrt{\varrho_\varepsilon}(\nabla_x \sqrt{\varrho_\varepsilon})_j
     {\xi''(r)} n_i^2\mathbf{u}^E_j dxdt
   \\
  &\leq  \bar c \varepsilon^{\frac{1}{q}} \left\| \xi''(r) n_i^2\mathbf{u}^E_j
    \right\|_{L^\infty(0, T; L^\infty(\Omega))}  \int_0^T \| \sqrt{\varrho_\varepsilon} \|_{L^{2\gamma}(\Omega)}
    \left\| \frac{\sqrt{\varrho_\varepsilon}(\mathbf{u}_\varepsilon)_j}{d_\Omega(x)}
    \right\|_{L^2(\Omega)}  dt
    \\
    & \quad + \hat c \varepsilon^{\frac{1}{q}}\left\| \xi''(r) n_i^2\mathbf{u}^E_j
      \right\|_{L^\infty(0, T; L^\infty(\Omega))} \int_0^T  \| \sqrt{\varrho_\varepsilon} \|_{L^{2\gamma}(\Omega)}
  \left\| \nabla_x \sqrt{\varrho_\varepsilon}
  \right\|_{L^2(\Omega)}   dt
     \triangleq I_{11}+I_{12}
  \end{align*}
  where $\frac{1}{q}+\frac{1}{2\gamma}=\frac{1}{2}$ and
  \begin{equation*}
    \|1 \|_{L^q(\Gamma_\varepsilon)} 
    = \varepsilon^{1/q},\,\, 
    \textrm{ with }\,\, q = \frac{2\gamma}{\gamma - 1}.
  \end{equation*}
  By using the smoothness of $\xi''(r)n_i^2\mathbf{u}^E_j$ and
  applying H\"older's inequality in time, we infer
  \begingroup \allowdisplaybreaks
  \begin{align*}
   % 2&\varepsilon \int_0^T  \int_{\Omega}
   % \sqrt{\varrho_\varepsilon} \sqrt{\varrho_\varepsilon}(\mathbf{v}_\varepsilon)_j
   %   \frac{\xi''(r)}{(c\varepsilon)^2} n_i^2\mathbf{u}^E_j dxdt 
   %   \\
   I_{11} &\leq c'\varepsilon^\frac{1}{q} \left\| \xi''(r) n_i^2\mathbf{u}^E_j
      \right\|_{L^\infty(0, T; L^\infty(\Omega))}  \left(\int_0^T \| \sqrt{\varrho_\varepsilon}
      \|_{L^{2\gamma}(\Omega)}^2dt\right)^\frac{1}{2} \left(\int_0^T \left\|
      \frac{\sqrt{\varrho_\varepsilon}(\mathbf{u}_\varepsilon)_j}{d_\Omega(x)}
      \right\|_{L^2(\Omega)}^2dt\right)^\frac{1}{2} 
    \\
    & \leq c''\varepsilon^\frac{1}{q} \left( \int_0^T \int_\Omega 
      \frac{{\varrho_\varepsilon}|(\mathbf{u}_\varepsilon)_j|^2}{d^2_\Omega(x)} dx dt
      \right)^{1/2}
    = c''  \left(\varepsilon^\frac{\gamma-1}{\gamma} \int_0^T \int_\Omega
      \frac{{\varrho_\varepsilon}|(\mathbf{u}_\varepsilon)_j|^2}{d^2_\Omega(x)}dxdt\right)^{1/2}
 %   \\
 %   &\leq C \left(\varepsilon^\frac{\gamma-1}{\gamma} \int_0^T \int_\Omega
 %     \frac{{\varrho_\varepsilon}|(\mathbf{u}_\varepsilon)_j|^2}{d^2_\Omega(x)}dxdt\right)^{1/2}
      %
  \end{align*}
  \endgroup
  Then, for $I_{11}$, we conclude
  \begin{equation*}
   I_{11}^2\leq \varepsilon^\frac{\gamma-1}{\gamma} C\int_0^T \int_\Omega
      \frac{{\varrho_\varepsilon}|(\mathbf{u}_\varepsilon)_j|^2}{d^2_\Omega(x)}dxdt
    \rightarrow 0\,\, \textrm{ as }\,\, \varepsilon \rightarrow 0,
  \end{equation*}
  under the assumption (\ref{cond-conv})$_2$. Moreover, for $I_{12}$,
  with analogous calculations, we have
  \begin{equation}
 % \begin{aligned}
    %&\varepsilon^\frac{\gamma-1}{\gamma} \int_0^T \int_\Omega
      %\frac{{\varrho_\varepsilon}|(\mathbf{w}_\varepsilon)_j|^2}{d^2_\Omega(x)}dxdt
      I_{12}^2 %&= \varepsilon^\frac{\gamma-1}{\gamma} C\int_0^T \int_\Omega
%      \frac{\varepsilon^2 {\varrho_\varepsilon}|(\nabla_x \log
%      \varrho_\varepsilon)_j|^2}{d^2_\Omega(x)}dxdt
%    \\
 %     &
      \leq C  \varepsilon^\frac{\gamma-1}{\gamma} \int_0^T \int_\Omega
      |(\nabla_x \sqrt{\varrho_\varepsilon})_j|^2dxdt   \rightarrow
      0\,\, \textrm{ as }\,\, \varepsilon \rightarrow 0.
 %   \end{aligned}
  \end{equation}

Similarly, as in $R_6$, the first term on the right-hand-side of \eqref{second} is such that
\begin{equation*}
  4\varepsilon \int_0^T \int_{\Omega}
  \sqrt{\varrho_\varepsilon}(\mathbf{u}_\varepsilon)_j
  \partial_i \sqrt{\varrho_\varepsilon} \partial_i (\mathbf{v}_{bl})_j
  \longrightarrow 0 \, \, \textrm{ as } \, \, \varepsilon \to 0,
\end{equation*}
thanks again to $\eqref{cond-conv}_2$.

Now, we consider the second term in the right-hand side of \eqref{second}. We have
\begin{equation*}
  4\varepsilon \int_0^T \int_{\Omega}
  \sqrt{\varrho_\varepsilon}(\mathbf{w}_\varepsilon)_j \partial_i \sqrt{\varrho_\varepsilon} \partial_i (\mathbf{v}_{bl})_j
  dxdt  =  8\varepsilon^2 \int_0^T \int_{\Omega}
  (\nabla_x  \sqrt{\varrho_\varepsilon})_j (\nabla_x\sqrt{\varrho_\varepsilon})_i \  \partial_i (\mathbf{v}_{bl})_j
  dxdt
\end{equation*}
that goes to zero as $\varepsilon \to 0$.  Similarly we can handle the
terms $I_3$ and $I_4$.

The terms $I_5$ and $I_6$ go to zero as $\varepsilon \to 0$. For $I_7$, we have
\begin{equation*}
\left[ -2\varepsilon \int_0^T \int_\Omega \sqrt{\varrho_\varepsilon}
\mathcal{S}(\mathbf{u}_\varepsilon) \mathbb{D}(\mathbf{u}^E)dxdt
+2\varepsilon \int_0^T \int_\Omega \sqrt{\varrho_\varepsilon} \mathcal{S}(\mathbf{u}_\varepsilon)
\mathbb{D}(\mathbf{v}_{bl})dxdt  \right] \longrightarrow 0 
  \,\,\, \text{as}   \,\,\, \varepsilon \to 0,
\end{equation*}
where, in particular, for the second term we have
\begin{equation*}
  \begin{aligned}
    2\varepsilon   \int_0^T \int_\Omega&  \sqrt{\varrho_\varepsilon}
    \mathcal{S}(\mathbf{u}_\varepsilon)  \mathbb{D}(\mathbf{v}_{bl})dxdt
\\
   & \leq  \varepsilon  \|  \mathbb{D}(\mathbf{v}_{bl}) \|_{L^\infty([0,T];L^\infty(\Omega))}
    \int_0^T  \|  \sqrt{\varrho_\varepsilon}  \|_{L^{2\gamma}(\Omega)}  \|
    \mathcal{S}(\mathbf{u}_\varepsilon)  \|_{L^2(\Omega)}
    \|   1  \|_{L^{\frac{2\gamma}{\gamma-1}}(\Gamma_\varepsilon)}  dt
\\
   & \leq C \varepsilon^{\frac{\gamma-1}{2\gamma}}
    \left(  \int_0^T  \|  \varrho_\varepsilon  \|_{L^\gamma(\Omega)}^\gamma
    \right)^{\frac{1}{2\gamma}}   \to 0 \, \, \textrm{ as } \, \, 
    \varepsilon \to 0,
  \end{aligned}
\end{equation*}
thanks to \eqref{cond-conv}$_1$. Similar analysis could be done for the term
\begin{equation*}
-2\varepsilon \int_0^T \int_\Omega \sqrt{\varrho_\varepsilon}
\mathcal{A}(\mathbf{u}_\varepsilon) \mathbb{A}(\mathbf{u}^E -
\mathbf{v}_{bl}) dxdt.
\end{equation*}

For $I_8$, we have
\begin{equation*}
\varepsilon \int_0^T \int_\Omega \varrho_\varepsilon |\mathbb{D}(\mathbf{u}^E)|^2dxdt
-2\varepsilon \int_0^T \int_\Omega  \varrho_\varepsilon  \mathbb{D}(\mathbf{u}^E)\mathbb{D}( \mathbf{v}_{bl})dxdt
+\varepsilon \int_0^T \int_\Omega \varrho_\varepsilon  |\mathbb{D}( \mathbf{v}_{bl})|^2dxdt,
\end{equation*}
where the first term goes to zero as $\varepsilon \to 0$, while the second and
the third term could be handled similar as above, namely
\begin{equation*}
  \begin{aligned}
  -2\varepsilon \int_0^T \int_\Omega & \varrho_\varepsilon
  \mathbb{D}(\mathbf{u}^E):\mathbb{D}(\mathbf{v}_{bl}) dxdt
  + \varepsilon \int_0^T \int_\Omega \varrho_\varepsilon
  \mathbb{D}(\mathbf{v}_{bl}): \mathbb{D}(\mathbf{v}_{bl})   dxdt
\\
&  \leq  C \varepsilon^{-\frac{1}{\gamma}}
  \| \varrho_\varepsilon \|_{L^\gamma([0,T];L^\gamma(\Omega))}
  \to 0 \,\,\, \textrm{ as } \,\,\, \varepsilon \to 0,
\end{aligned}
\end{equation*}
thanks again to \eqref{cond-conv}$_1$.
With a similar we can estimate the term
\begin{equation*}
\varepsilon \int_0^T \int_\Omega 
\varrho_\varepsilon |\mathbb{A}(\mathbf{u}^E - \mathbf{v}_{bl})|^2 dxdt.
\end{equation*}
In conclusions, we have $R_9 \to 0$ as $\varepsilon \to 0$.

For  the term $R_{10}$, we have
\begin{equation*}
  \begin{aligned}
-\int_0^T& \int_\Omega \left[-p(\varrho^E)\textrm{div}_x \mathbf{u}^E
+ p(\varrho_\varepsilon) \textrm{div}_x \overline{\mathbf{v}}
- p'(\varrho^E)(\varrho_\varepsilon-\varrho^E)\text{div}_x \mathbf{u}^E \right] dxdt
\\
& \quad + \varepsilon \int_0^T \int_\Omega
\frac{\varrho_\varepsilon}{\varrho^E}p'(r)\nabla_x \varrho^E \left(
\frac{\nabla_x \varrho^E}{\varrho^E} - \frac{\nabla_x \varrho_\varepsilon}{\varrho_\varepsilon}
\right) dxdt
\\
& \quad - \varepsilon \int_0^T \int_\Omega p'(\varrho_\varepsilon)\nabla_x \varrho_\varepsilon
\frac{\nabla_x \varrho^E}{\varrho^E} dxdt
\\
& = -\int_0^T \int_\Omega \Big[-p(\varrho^E)\textrm{div}_x \mathbf{u}^E
+ p(\varrho_\varepsilon) \textrm{div}_x \overline{\mathbf{u}}
\\
&\hspace{4 cm}
+ p(\varrho_\varepsilon) \textrm{div}_x \overline{\mathbf{w}}
- p'(\varrho^E)(\varrho_\varepsilon-\varrho^E)\text{div}_x \mathbf{u}^E
\Big] dxdt
\\
&\quad + \varepsilon \int_0^T \int_\Omega
\frac{\varrho_\varepsilon}{\varrho^E}p'(\varrho^E)\nabla_x \varrho^E \left(
\frac{\nabla_x \varrho^E}{\varrho^E} - \frac{\nabla_x \varrho_\varepsilon}{\varrho_\varepsilon}
\right) dxdt
\\
&\quad - \varepsilon \int_0^T \int_\Omega
p'(\varrho_\varepsilon)\nabla_x \varrho_\varepsilon \frac{\nabla_x \varrho^E}{\varrho^E} dxdt
\\
& = -\int_0^T \int_\Omega \Big[-p(\varrho^E)\textrm{div}_x \mathbf{u}^E
+ p(\varrho_\varepsilon) \textrm{div}_x (\mathbf{u}^E-\mathbf{v}_{bl})
\\
&\hspace{4 cm}
+ \varepsilon p(\varrho_\varepsilon) \textrm{div}_x \nabla_x \log \mathbf{u}^E
- p'(\varrho^E)(\varrho_\varepsilon-\varrho^E)\text{div}_x \mathbf{u}^E \Big] dxdt
\\
& \quad + \varepsilon \int_0^T \int_\Omega
\frac{\varrho_\varepsilon}{\varrho^E}p'(\varrho^E)\nabla_x \varrho^E \left(
\frac{\nabla_x \varrho^E}{\varrho^E} - \frac{\nabla_x \varrho_\varepsilon}{\varrho_\varepsilon}
\right) dxdt
\\
&\quad - \varepsilon \int_0^T \int_\Omega p'(\varrho_\varepsilon)\nabla_x
\varrho_\varepsilon \frac{\nabla_x \varrho^E}{\varrho^E} dxdt,
\end{aligned}
\end{equation*}
where
\begin{equation*}
- \varepsilon \int_0^T \int_\Omega p(\varrho_\varepsilon) \textrm{div}_x \nabla_x \log \mathbf{u}^E
dxdt - \varepsilon \int_0^T \int_\Omega p'(\varrho_\varepsilon)\nabla_x \varrho_\varepsilon
\frac{\nabla_x \varrho^E}{\varrho^E} dxdt = 0.
\end{equation*}
Consequently, we consider the following quantity
\begin{equation*}
  \begin{aligned}
-\int_0^T &\int_\Omega  \left[- p(\varrho^E)\textrm{div}_x \mathbf{u}^E
+ p(\varrho_\varepsilon) \textrm{div}_x (\mathbf{u}^E-\mathbf{v}_{bl})
- p'(\varrho^E)(\varrho_\varepsilon-\varrho^E)\text{div}_x \mathbf{u}^E \right] dxdt
\\
&\quad + \varepsilon \int_0^T \int_\Omega
\frac{\varrho_\varepsilon}{\varrho^E}p'(\varrho^E)\nabla_x \varrho^E \left(
\frac{\nabla_x \varrho^E}{\varrho^E} - \frac{\nabla_x \varrho_\varepsilon}{\varrho_\varepsilon}
\right) dxdt
\\
&  = -\int_0^T \int_\Omega  \left[-p(\varrho^E) + p(\varrho_\varepsilon)
- p'(\varrho^E)(\varrho_\varepsilon-\varrho^E) \right] \textrm{div}_x \mathbf{u}^E dxdt
\\
&\quad + \int_0^T \int_\Omega p(\varrho_\varepsilon) \text{div}_x \mathbf{v}_{bl} dxdt
+ \varepsilon \int_0^T \int_\Omega \frac{\varrho_\varepsilon}{\varrho^E}p'(\varrho^E)\nabla_x \varrho^E \left(
\frac{\nabla_x \varrho^E}{\varrho^E} - \frac{\nabla_x \varrho_\varepsilon}{\varrho_\varepsilon} \right) dxdt,
\end{aligned}
\end{equation*}
where
\begin{equation*}
\left| \int_0^T \int_\Omega \left[-p(\varrho^E)
+ p(\varrho_\varepsilon) - p'(\varrho^E)(\varrho_\varepsilon-\varrho^E) \right]
\textrm{div}_x \mathbf{u}^E dxdt \right| \leq C \int_0^T \mathcal{E}(t,\cdot) dt
\end{equation*}
and
\begin{equation*}
\left[ \varepsilon \int_0^T \int_\Omega
\frac{\varrho_\varepsilon}{\varrho^E}p'(\varrho^E)\nabla_x \varrho^E \left(
\frac{\nabla_x \varrho^E}{\varrho^E} - \frac{\nabla_x \varrho_\varepsilon}{\varrho_\varepsilon}
\right) dxdt \right] \longrightarrow 0  \,\,\, \text{as}  \,\,\, \varepsilon \to 0.
\end{equation*}
For the remaining term, we have
\begin{equation*}
\int_0^T \int_\Omega p(\varrho_\varepsilon) \text{div}_x \mathbf{v}_{bl} dxdt 
=  \int_0^T \int_\Omega (p(\varrho_\varepsilon) - p(\varrho^E)) \text{div}_x \mathbf{v}_{bl}dxdt
+ \int_0^T \int_\Omega p(\varrho^E) \text{div}_x \mathbf{v}_{bl} dxdt,
\end{equation*}
where the second term goes to zero as $\varepsilon \to 0$.
In order to handle the first term, we observe that since $p(\varrho_\varepsilon)$ is strictly convex, the quantity
\begin{equation*}
\varrho_\varepsilon \rightarrow H(\varrho_\varepsilon|\varrho^E)
= H(\varrho_\varepsilon) - H(\varrho^E) - H'(\varrho^E)(\varrho_\varepsilon - \varrho^E)
\end{equation*}
is non-negative strictly convex function on $(0,\infty)$ equal to zero when
$\varrho_\varepsilon = \varrho^E$ and growing at infinity with the rate $\varrho_\varepsilon^\gamma$. 
Consequently, the integral
$\int_{\Omega}H\left(\varrho_\varepsilon|\varrho^E\right)\left(t,\cdot\right)dx$ 
provides a control of $\left(\varrho_\varepsilon-\varrho^E\right)\left(t,\cdot\right)$
in $L^{2}$ over the sets $\left\{ x:\left|\varrho_\varepsilon-\varrho^E\right|\left(t,x\right)<1\right\} $
and in $L^{\gamma}$ over the sets $\left\{ x:\left|\varrho_\varepsilon-\varrho^E\right|\left(t,x\right)\geq\ 1\right\}$,
%. So, for any $r$ in a compact set $\left(0,\infty\right)$, there holds
such that
\begin{equation} \label{H(rho|r)}
  H(\varrho_\varepsilon|\varrho^E)\approx\left|\varrho_\varepsilon
    -\varrho^E\right|^{2}1_{\left\{ \left|\varrho_\varepsilon-\varrho^E\right|<1\right\} }+\left|\varrho_\varepsilon
    -\varrho^E\right|^{\gamma}1_{\left\{ \left|\varrho_\varepsilon-\varrho^E\right|\geq 1\right\} },\;\;\;\forall\varrho_\varepsilon\geq0,
\end{equation}
in the sense that $H(\varrho_\varepsilon|\varrho^E)$ gives an upper and lower bound in term of the right-hand side quantity
(see, for example, Sueur \cite{Su} Section~2.1 relations (18)--(20)).
In particular, since $\Omega$ is bounded, there exists a constant $c>0$
such that (see Sueur \cite{Su} Section~2.1, relation (20))
\begin{equation*}
    c\| \varrho_\varepsilon - \varrho^E \|_{L^\gamma(\Omega)}
    \leq  \left(  \int_\Omega H(\varrho_\varepsilon|\varrho^E)dx
    \right)^{\gamma}  + 
    \int_\Omega H(\varrho_\varepsilon|\varrho^E)dx,
  \end{equation*}
\begin{equation} \label{H-gamma}
    c\int_\Omega H(\varrho_\varepsilon|\varrho^E)dx
    \leq     \|  \varrho_\varepsilon - \varrho^E   \|_{L^\gamma(\Omega)}^{\gamma}
    +\|  \varrho_\varepsilon - \varrho^E \|_{L^\gamma(\Omega)}^2.
\end{equation}
Consequently, we can write
\begin{equation*}
  \begin{aligned}
\int_0^T & \int_\Omega (p(\varrho_\varepsilon) - p(\varrho^E)) \text{div}_x \mathbf{v}_{bl}dxdt
\\
&= \int_0^T \int_\Omega
(p(\varrho_\varepsilon) -p'(\varrho^E)(\varrho_\varepsilon - \varrho^E) - p(\varrho^E))
\text{div}_x \mathbf{v}_{bl}dxdt
\\
&\quad + \int_0^T \int_{\Omega \cap {\left\{ \left|\varrho_\varepsilon-\varrho^E\right|<1\right\}}}
p'(\varrho^E)(\varrho_\varepsilon - \varrho^E) \text{div}_x \mathbf{v}_{bl}dxdt
\\
&\quad + \int_0^T \int_{\Omega \cap {\left\{ \left|\varrho_\varepsilon-\varrho^E\right|\geq 1\right\}}}
p'(\varrho^E)(\varrho_\varepsilon - \varrho^E) \text{div}_x \mathbf{v}_{bl}dxdt
\end{aligned}
\end{equation*}
and
\begin{equation*}
\left| \int_0^T \int_\Omega (p(\varrho_\varepsilon) - p(\varrho^E))
\text{div}_x \mathbf{v}_{bl}dxdt \right| \leq
C\varepsilon + C\int_0^T \mathcal{E}(t,\cdot) dt.
\end{equation*}

\subsubsection{Damping term}
For the term $R_{11}$, we have
\begin{align*}
r_1\int_0^T & \int_{\Omega}
\varrho_\varepsilon |\mathbf{v}_\varepsilon - \mathbf{w}_\varepsilon|(\mathbf{v}_\varepsilon - \mathbf{w}_\varepsilon)
\cdot \overline{\mathbf{v}} dxdt
\\
& = r_1\int_0^T \int_{\Omega} \varrho_\varepsilon |\mathbf{u}_\varepsilon |\mathbf{u}_\varepsilon
     \cdot (\overline{\mathbf{u}} + \overline{\mathbf{w}}) dxdr
     \\
& =
r_1\int_0^T \int_{\Omega} \varrho_\varepsilon |\mathbf{u}_\varepsilon |\mathbf{u}_\varepsilon
\cdot (\mathbf{u}^E - \mathbf{v}_{bl} + \varepsilon \nabla_x \log \varrho^E) dxdt
\\
&= 
\left[
r_1\int_0^T \int_{\Omega} \sqrt{\varrho_\varepsilon}|\mathbf{u}_\varepsilon| \sqrt{\varrho_\varepsilon}
\mathbf{u}_\varepsilon \mathbf{u}^E dxdt
- r_1\int_0^T \int_{\Omega}
\sqrt{\varrho_\varepsilon}|\mathbf{u}_\varepsilon| \sqrt{\varrho_\varepsilon}
\mathbf{u}_\varepsilon \mathbf{v}_{bl} dxdt \right.
\\
& \hspace{2 cm} \left.
+ r_1\varepsilon \int_0^T \int_{\Omega}
\sqrt{\varrho_\varepsilon}|\mathbf{u}_\varepsilon| \sqrt{\varrho_\varepsilon}
\mathbf{u}_\varepsilon \nabla_x \log \varrho^E
     dxdt \right] \longrightarrow 0  \,\,\, \text{as}
     \,\,\, (r_1,\varepsilon) \to 0.
\end{align*}

\subsubsection{Proof of Theorem \ref{main}}
From the previous estimates, back to (\ref{step-8}), we end up with
\begin{equation} \label{step-9}
 \hspace{-0.8 cm} \begin{aligned}
  \mathcal{E}(T,\cdot) - E(0,\cdot)   +\varepsilon &\int_0^T\!\! \int_\Omega\!\!
  \varrho_\varepsilon   (p'(\varrho_\varepsilon)\nabla_x \log \varrho_\varepsilon
  -p'(\varrho^E)\nabla_x \log r)(\nabla_x \log \varrho_\varepsilon - \nabla_x \varrho^E)   dxdt
\\
&  \leq 
C\varepsilon^{\frac{1}{p}} 
+C\varepsilon
+C\eta(\varepsilon) + C\int_0^T \mathcal{E}(t,\cdot)dt
\end{aligned}
\end{equation}
where $\eta(\varepsilon)$ is such that 
$\eta(\varepsilon) \to 0$ as $\varepsilon \to 0$ and represents the terms analyzed above that go to zero as $\varepsilon \to 0$.

In order to conclude the proof of Theorem \ref{main} we need to handle the term
\begin{equation*}
\varepsilon \int_0^T \int_\Omega   \varrho_\varepsilon
(p'(\varrho_\varepsilon)\nabla_x \log \varrho_\varepsilon-p'(\varrho^E)\nabla_x
\log \varrho^E)(\nabla_x \log \varrho_\varepsilon - \nabla_x \log \varrho^E)   dxdt.
\end{equation*}
This can be as in the same spirit of Bresch et al. \cite{BNV-2}. We have
\begin{equation} \label{press-contr}
  \begin{aligned}
    \varrho_\varepsilon & (p'(\varrho_\varepsilon)\nabla_x \log \varrho_\varepsilon
    - p'(\varrho^E)\nabla_x \log \varrho^E)(\nabla_x \log \varrho_\varepsilon - \nabla_x \log \varrho^E)
    \\
   & = \varrho_\varepsilon p'(\varrho_\varepsilon) |\nabla_x \log \varrho_\varepsilon - \nabla_x \log \varrho^E|^2
   + \varrho_\varepsilon (p'(\varrho_\varepsilon)
   \\
   &\hspace{4 cm}- p'(\varrho^E))\nabla_x \log \varrho^E (\nabla_x \log \varrho_\varepsilon - \nabla_x \log \varrho^E)
    \\
   & =\varrho_\varepsilon p'(\varrho_\varepsilon) |\nabla_x \log \varrho_\varepsilon - \nabla_x \log \varrho^E|^2  + \nabla_x\big[p(\varrho_\varepsilon) - p(\varrho^E)
    \\
    &\hspace{5 cm} - p'(\varrho^E)(\varrho_\varepsilon - \varrho^E)\big] \nabla_x \log \varrho^E
    \\
    &\quad - \left[\varrho_\varepsilon(p'(\varrho_\varepsilon) - p'(\varrho^E)) - p''(\varrho^E)(\varrho_\varepsilon - \varrho^E)\varrho^E\right]|\nabla_x\log\varrho^E|^2.
\end{aligned}
\end{equation} 
The first term on the right hand side of \eqref{press-contr} is positive and thus can be neglected in \eqref{step-9}.
For the remaining terms, integrating by parts, we have
\allowdisplaybreaks[4]
\begin{align*}
  \int_0^T \int_{\Omega} & \nabla_x \left[p(\varrho_\varepsilon) -
    p(\varrho^E) - p'(\varrho^E)(\varrho_\varepsilon -
    \varrho^E)\right] \nabla_x \log \varrho^E
  \\
 &\quad  - \left[\varrho_\varepsilon(p'(\varrho_\varepsilon) - p'(\varrho^E)) -
    p''(\varrho^E)(\varrho_\varepsilon - \varrho^E)\varrho^E
  \right] |\nabla_x\log\varrho^E|^2 dxdt
  \\
&  = - \int_0^T \int_{\Omega} |(p(\varrho_\varepsilon) - p(\varrho^E)
  - p'(\varrho^E) (\varrho_\varepsilon - \varrho^E)| |\Delta \log \varrho^E| dxdt
  \\
 &\quad  - \int_0^T \int_{\Omega} \left[\varrho_\varepsilon
    (p'(\varrho_\varepsilon) - p'(\varrho^E)) -
    p''(\varrho^E)(\varrho_\varepsilon - \varrho^E)\varrho^E\right]
  |\nabla_x \log \varrho^E|^2 dxdt.
\end{align*}
Now, from Lemma~2.2 in \cite{BaNg}, we observe that
\begin{equation*}
  \left[\varrho_\varepsilon (p'(\varrho_\varepsilon)
    - p'(\varrho^E)) - p''(\varrho^E)(\varrho_\varepsilon - \varrho^E)\varrho^E\right]
\approx H(\varrho_\varepsilon|\varrho^E).
\end{equation*}
Consequently, we have
\begin{equation*}
  \begin{aligned}
\varepsilon \int_0^T \int_\Omega \varrho_\varepsilon
(p'(\varrho_\varepsilon)\nabla_x \log \varrho_\varepsilon & -p'(\varrho^E)\nabla_x
\log \varrho^E)(\nabla_x \log \varrho_\varepsilon - \nabla_x \log \varrho^E) dxdt
\\
&\leq C\varepsilon \int_0^T \mathcal{E}(t,\cdot) dt.
\end{aligned}
\end{equation*}
Now, from the assumption (\ref{id-conv}) we have $E(0,\cdot) \to 0$ as $\varepsilon \to 0$.
Consequently, from (\ref{step-9}), letting $\varepsilon \to 0$ we end up with
\begin{equation} \label{step_10}
  \mathcal{E}(T,\cdot)
  \leq   C\int_0^T \mathcal{E}(t,\cdot)dt
\end{equation}
and applying Gronwall's inequality we end the proof of Theorem \ref{main}.

\subsubsection*{Acknowledgment
}
M. Caggio has been supported by the Praemium Academiae of \v S. Ne\v
casov\' a, and by
the Czech Science Foundation under the grant GA\v CR 22-01591S. 
L. Bisconti is  member of the Gruppo
Nazionale per l'Analisi Mate\-ma\-tica, la Probabilit\`a e le loro
Applicazioni (GNAMPA) of the Istituto Nazionale di Alta Matematica
(INdAM).

  \section*{Appendix}
  In the following we provide some considerations (and details) on the additional
  boundary condition introduced in $\eqref{bc-ag}_2$, i.e.
  \begin{equation}
    \big[\varrho_\varepsilon  \mathbf{w}_\varepsilon  \big] \times \mathbf{n} |_{\partial  \Omega}
    = 0,\,\, \textrm{ on  }\,\, \D\Omega\,\,\,
    \iff\,\,\, \big[\varrho_\varepsilon  \nabla \log \varrho_\varepsilon\big]
    \times \mathbf{n} |_{\partial  \Omega}= 0,\,\, \textrm{ on  } \D\Omega,
  \end{equation}
  which is considered in the sense of distribution:
  \begin{equation} \label{distribution-on-bb} C_0 ^{\infty}
    ({\D\Omega})\ni \phi \longmapsto \bigg(\big[\varrho_\varepsilon
    \nabla \log \varrho_\varepsilon\big] \times \mathbf{n} |_{\partial
      \Omega}\bigg)(\phi) \triangleq\int_{\D\Omega} \nabla
    \varrho_\varepsilon\times \mathbf{n}\cdot \phi \, ds,
  \end{equation}
  for a.a.  $t\in [0, T]$, $T>0$.  Next, to keep the notation concise,
  we omit the square brackets around $\varrho_\varepsilon
    \nabla \log \varrho_\varepsilon$.

  Let us consider $\varphi : \mathbb{R}^3 \to \mathbb{R}$ such that
  $\varphi\in C^{\infty}_c(\mathbb{R}^3)$, with
  $\textrm{supp}\,\varphi \subseteq \overline{B(0, 1)}$ and
  $\int_{B(0, 1)} \varphi \ dx = 1$.  For $n\in \mathbb{N}$, set
  $\varphi_n = (1/n)^3 \varphi(x/n)$, with
  $\textrm{supp}\,\varphi \subseteq \overline{B(0, 1/n)}$. Then, as a
  standard property of convolutions, we have that
  $\textrm{supp}\, \varphi_n\ast f \subseteq \Omega + \overline{B(0, 1/n})$ and that
 \begin{equation} \label{conv-conv}
   \|  \varphi_n \ast f - f   \|_{L^p(\Omega)} \to 0 \,\,\, \text{as} \,\,\, 
   n \to +\infty
\end{equation}
for $f \in L^p(\Omega)$, $1\leq p < +\infty$. Define $\ren = \varphi_n\ast \varrho_\varepsilon$
with $\textrm{supp}\, \ren \subseteq \Omega + \overline{B(0, 1/n})\triangleq\Omega_n$. Clearly we have that
\begin{equation} \label{cond-curl}
  \ren \nabla \log \ren = \nabla \ren,\,\, \Rightarrow \,\, \nabla \times \big(\ren \nabla \log \ren\big) = 0 ,\,\, \textrm{ in $\Omega$},\,\,\,
  \textrm{for a.a.  $t\in [0, T]$},
\end{equation}
and hence, for any $\phi \in C^{\infty}_0(\Omega_n)$, integrating by parts we have that
\begin{equation*}
  \int_{\Omega}\big(\ren \nabla \log \ren\big)\cdot \nabla\times \phi\, dx
  = \int_{\Omega}\underbrace{\nabla \times \big(\ren \nabla \log \ren \big)}_{=0,\, \textrm{ thanks to }\, \eqref{cond-curl}}\cdot \phi\, dx
  +\int_{\D\Omega} \big(\ren \nabla \log \ren\big)\times \mathbf{n}\cdot \phi\, ds,
\end{equation*}
for a.a.  $t\in [0, T]$. On the other hand, we have that $\ren \nabla \log \ren =
2 \sqrt{\ren} \ \nabla \sqrt{\ren}$, and so the above relation can be equivalently rewritten as 
\begin{equation*}
 2 \int_{\Omega}\big(\sqrt{\ren} \ \nabla \sqrt{\ren}\big)\cdot \nabla\times \phi\, dx =
  \int_{\D\Omega} \big( \nabla \ren\big)\times \mathbf{n}\cdot \phi\, ds,\,\, \textrm {for a.a.  $t\in [0, T]$}.
\end{equation*}
By using the fact that $\sqrt{\ren}\in L^2(0, T; W^{1,2}(\Omega)\big)$, and 
$\sqrt{\ren(t)}\to \sqrt{\varrho_\varepsilon(t)} \textrm{ in }  W^{1,2}(\Omega)$ strongly, for a.a.  $t\in [0, T]$,
then $ \nabla \ren = 2 \sqrt{\ren}\, \nabla \sqrt{\ren}$ is uniformly
bounded, with respect to $n$, in $L^2(0, T; L^{3/2}(\Omega)\big)$ (see
Remark~\ref{rmk-rmk} below), and
\begin{align}
  & \nabla \ren \rightharpoonup \mathcal{B}\,\, \textrm{ in }\,\,
    L^2(0, T; L^{3/2}(\Omega)\big) \textrm{ weakly}, \label{weak-B}
    \intertext{and}
  &\nabla \ren (t)= 2\sqrt{\ren(t)}\,\nabla \sqrt{\ren(t)} \to 2\sqrt{\varrho_\varepsilon(t)}\,
    \nabla \sqrt{\varrho_\varepsilon(t)}\,\, \textrm{ in }\,\,
    L^{3/2}(\Omega) \textrm{ strongly},
\end{align}
for a.a.  $t\in [0, T]$. Also, since $\varrho_\varepsilon\in L^{\infty}(0, T;
L^1(\Omega)\cap L^\gamma(\Omega))$, we have that $\ren(t)\to
\varrho_\varepsilon(t) \textrm{ in }  L^1(\Omega)\cap
L^\gamma(\Omega)$ strongly, for a.a.  $t\in [0, T]$. Due to the
uniqueness of the limit in the above convergence types, we have that
$\nabla \varrho_\varepsilon(t) \triangleq \mathcal{B}(t) =
2\sqrt{\varrho_\varepsilon(t)}\, \nabla
\sqrt{\varrho_\varepsilon(t)}\in L^{3/2}(\Omega)$, for a.a.  $t\in [0, T]$.
In particular, as a consequence of \eqref{weak-B},
we have that $\nabla \varrho_\varepsilon\in L^2(0, T; L^{3/2}(\Omega)$.

Then $\varrho_\varepsilon(t)\in W^{1, \frac{3}{2}}(\Omega)$ and
$\varrho_\varepsilon(t)\in W^{\frac{1}{3}, \frac{3}{2}}(\D\Omega)$, for a.a.  $t\in [0,
T]$. Thus, in \eqref{distribution-on-bb}, we have that $\big(\nabla
\varrho_\varepsilon\times \mathbf{n}\big)(t)\in \Big(W^{\frac{2}{3},
  3}(\D\Omega)\Big)^\ast=W^{-\frac{2}{3}, \frac{3}{2}}(\D\Omega)$, for
a.a.  $t\in [0, T]$. 
\begin{remark} \label{rmk-rmk}
  For any $n$, the vector fields $\varphi_n$ and $\varrho_\varepsilon$ are
  extended to zero outside $\Omega_n$. We have that
  \begin{equation*}
  \begin{aligned}
    \int_0^T\|\nabla \ren(t)\|_{L^{3/2}(\Omega)}^2dt 
    &\leq  \int_0^T\|\nabla \varphi_n\ast
    \varrho_\varepsilon(t)\|_{L^{3/2}(\Omega_n)}^2dt
    \\
    &=   \int_0^T\|\nabla \varphi_n\ast
    \varrho_\varepsilon(t)\|_{L^{3/2}(\mathbb{R}^3)}^2dt
    \\
    & \leq
   \|\nabla \varphi_n\|_{L^q(B(0, 1/n))}^2  \int_0^T \|
   \varrho_\varepsilon(t)\|_{L^{\gamma}(\Omega)}^2 dt\\
   &\leq C T \|
   \varrho_\varepsilon(t)\|_{L^\infty(0, T;L^{\gamma}(\Omega))}^2\leq CT,
    \end{aligned}
  \end{equation*}
  where we used the Young inequality for convolutions with $p=\gamma$,
  $q=3\gamma/(5\gamma -3)$, $r=3/2$ and 
  $\frac{1}{p}+\frac {1}{q} =\frac {1}{r}+1$, and $q\geq 1$.
\end{remark}

% -------------------------------------
%The Bibliography
%-------------------------------------

\end{document}